\def\leq {\leqslant}\def\le {\leqslant}
\def\ge {\geqslant}
\def\geq {\geqslant}
\providecommand{\U}[1]{\protect\rule{.1in}{.1in}}
\theoremstyle{plain}
\newtheorem{theorem}{Theorem}[section]
\newtheorem{lemma}[theorem]{Lemma}%[section]
\newtheorem{definition}[theorem]{Definition}%[section]
\newtheorem{corollary}[theorem]{Corollary}%X[section]
\newtheorem{proposition}[theorem]{Proposition}%[section]
\theoremstyle{definition}
\newtheorem{remark}[theorem]{Remark}%[section]
\newtheorem{example}[theorem]{Example}
\numberwithin{equation}{section}
\def\N{{{\Bbb N}}}
\def\R{{\Bbb R}}
\def\M{\mathcal M}
\def\Mpl{\mathcal M\sp+}
\def\bx{\overline X}
\begin{document}

\title[Inequalities for K-functionals
] {
%1. Embedding theorems and inequalities of K-functionals and  moduli of smoothness
%\\
%2.
% Inequalities of K-functionals %and  moduli of smoothness
%on r.i. Banach spaces and embedding theorems
%\\
%3.
A unified  approach to  inequalities for K-functionals and  moduli of smoothness
%\footnote{On behalf of all authors, the corresponding author states that there is no conflict of interest. All data generated or analysed during this study are included in this manuscript. }
%\\
%A unified  approach to  inequalities for  moduli of smoothness (Mirek)
}
%\begin{center}
%(\today)
%\end{center}

\author{Amiran Gogatishvili}
\address{A. Gogatishvili \\
Mathematical Institute of the Czech Academy of Sciences\\
Zitn{\'a} 25, 115 67 Prague 1, Czech Republic}

\email{gogatish@math.cas.cz}

\author{Bohum{\'i}r Opic}
%\address{B. Opic\\
%Department of Mathematical Analysis, Faculty of Mathematics and Physics,
%Charles University, Sokolovsk{\'a} 83, 186 75 Prague 8, Czech Republic
%\\
%and
%\\
%Department of Mathematics, Faculty of Science, J. E. Purkyn\v e University, \v Cesk\' e ml\' ade\v ze~8, 400 96, \' Ust{\'\i} nad Labem, Czech Republic}

%\email{opic@karlin.mff.cuni.cz}

\address{Bohum\'{\i}r Opic,
Department of Mathematical Analysis,
Faculty of Mathematics and Physics, Charles University,
 Sokolovsk\'a 83, 186 75 Prague 8, Czech Republic}
\email{opic@karlin.mff.cuni.cz
\vskip0,08cm
\noindent
and}

\address {\vskip-0,5cm \noindent%Bohum\'{\i}r Opic,
Department of Mathematics, Faculty of Science, J. E. Purkyn\v e University, \v Cesk\' e ml\' ade\v ze~8, 400 96 \' Ust{\'\i} nad Labem, Czech Republic
}

\author{Sergey Tikhonov}
%\address{S. Tikhonov \\
%ICREA
% and Centre de Recerca Matem\`{a}tica
%\\ Apartat 50 08193 Bellaterra, Barcelona, Spain
%}

\address{S.~Tikhonov,
ICREA, Pg.
Llu\'is Companys 23, 08010 Barcelona, Spain\\
Centre de Recerca Matem\`atica,
Campus de Bellaterra, Edifici~C 08193 Bellaterra (Barcelona), Spain;  and Universitat Aut\`onoma de
Barcelona, Edifici~C 08193 Bellaterra (Barcelona), Spain.}

\email{stikhonov@crm.cat}

\author{Walter Trebels}
\address{W. Trebels \\
Department of Mathematics, AG Algebra,
Technical University of Darmstadt,
%Fb. Mathematik, AG AGF, TU Darmstadt,
64289 Darmstadt, Germany}
\email{trebels@mathematik.tu-darmstadt.de}
\subjclass[2000]{Primary   41A17, 46B70; Secondary 46E30,  46E35.}
\keywords{
Moduli of smoothness, $K$-functionals, Holmstedt formulas, weighted Lorentz spaces, Lorentz-Karamata spaces}

\thanks{The research has been partially supported by
 by the grant P201-18-00580S of the Grant Agency of the Czech Republic,
 %Supported by
PID2020-114948GB-I00,  2021 SGR 00087, %AP08856479,
  the CERCA Programme of the Generalitat de Catalunya,
 the  Ministry of Education and Science of the Republic of Kazakhstan AP14870758,
  and  by the Spanish State Research Agency, through the Severo Ochoa and Mar\'ia de Maeztu Program for Centers and Units of Excellence in R\&D (CEX2020-001084-M).
The research of Amiran  Gogatishvili was partially supported by  the grant project 23-04720S of the Czech Science Foundation (GA\v{C}R),  The Institute of Mathematics, CAS is supported  by RVO:67985840  and by  Shota Rustaveli National Science Foundation (SRNSF), grant no: FR21-12353.}

\begin{abstract}
The paper provides a detailed study of crucial inequalities for smoothness and interpolation characteristics in rearrangement invariant  Banach function spaces.
We present a unified  approach based on Holmstedt formulas to obtain these estimates. As examples,
we derive new inequalities for moduli of smoothness and $K$-functionals in various Lorentz spaces.
\end{abstract}
\maketitle

%\tableofcontents
\vskip 0.5cm

\section{Introduction}
Some, nowadays well-known, inequalities between  moduli of continuity,
or more general, between moduli of smoothness are attached to the
names of Marchaud,  Ul'yanov, and Kolyada.
These inequalities play an
important role in approximation theory as well as in the theory of
function spaces, in particular, they
can be used to derive embedding properties of
function
spaces with fixed degree of smoothness, see, e.g.,
  \cite[Section~5.4]{besh}, \cite{besov1}, \cite{devore}.

%\medskip
The purpose of this  paper is to consider crucial
inequalities (Marchaud, Ul'yanov, etc.)
from an abstract point of view. To this end, in Section~4 we assume suitable embeddings between interpolation and potential spaces
(the interpolation spaces may be interpreted as abstract Besov spaces).
Simultaneously, abstract versions of the Holmstedt formulas
are developed, which allow also  to cover
limiting cases. In Section~5  applications are given in the case of
general  weighted Lorentz spaces. Finally, Section~6 deals with
applications to Lorentz-Karamata spaces.

%\medskip
  To illustrate our results, we start in Subsection~1.1 with the
formulation of the aforesaid basic
inequalities adapted to Lebesgue spaces $\, L_p(\mathbb{R}^n), \, 1<p<\infty.$
%In Subsection~1.2 we describe improvements and extensions
% in the framework of Lorentz spaces $\, L_{p,r}(\mathbb{R}^n)$
% (note that $\, L_{p,p}= L_p$), their proofs are sketched in Section2.
Their improvements and extensions in the framework of Lorentz spaces $\, L_{p,r}(\mathbb{R}^n)$
 (note that $\, L_{p,p}= L_p$) are described in Subsection~1.2, proofs are given in Section~2.

\subsection{Some basic results}
 A detailed study of inequalities between different moduli of
smoothness on
$L_p(\mathbb{R}^n),$ $1\le p \le \infty$,  can be naturally divided into
two parts: inequalities for moduli of smoothness of different orders in
$\, L_p$  and inequalities in different metrics $\, (L_p,L_{p*})$.
In the paper a modulus of smoothness of order $\, \kappa >0$ on an r.i.
function space $\, X$ (defined in Section 3, e.g., $\, X=L_p$)
is given by
\begin{equation} \label{fracdif}
 \omega_\kappa(f,t)_X=\sup_{|h|\le t }  \left\|
\Delta_{h}^{\kappa} f(x) \right\|_{X}, \quad \mbox{where }\ \
\Delta_{h}^{\kappa} f(x) =\sum\limits_{\nu=0}^\infty(-1)^{\nu}
\binom{\kappa}{\nu} f\,\big(x+\nu h\big).
\end{equation}

Let us begin with the key inequalities on $\, L_p(\mathbb R^n)$.
Trivially, if  $k, m, n \in {\mathbb N}$ and $1\le p\le\infty,$ then
%Trivially, one has
\begin{equation}\label{trivest}
\omega_{k+m}(f,t)_{L_p} \lesssim \omega_k(f,t)_{L_{p}}
\quad\mbox{for all } \ t>0 \mbox{ and } f \in L_p( \mathbb R^n).
%\qquad k,m \in {\mathbb N},\; 1 \le p \le \infty,\; t>0 .
\end{equation}
In 1927 Marchaud \cite{mar} proved  his
famous  inequality (being a weak inverse of (\ref{trivest})):
Given $k, m, n \in {\mathbb N}$ and $1\le p\le\infty,$ then
% For all $\, t>0,$
\begin{equation}\label{mar1}
\omega_{k}(f,t)_{L_{{p}}} \lesssim t^k \int_{t}^\infty u^{-k }
\omega_{k+m}(f,u)_{L_{{p}}} \frac{du}{u}
\quad\mbox{for all } \ t>0 \mbox{ and } f \in L_p( \mathbb R^n).
%\, ,\qquad k,m \in {\mathbb N},\; 1 \le p \le \infty .
\end{equation}
Using geometric properties of the $\, L_p$ spaces
when $\, 1<p<\infty$, in 1958  M. F. Timan
improved (\ref{mar1}) (see, e.g., \cite[Chapter~2, Theorem~8.4]{devore}): If $k, m, n \in {\mathbb N}$, $1<p<\infty,$
 and $q=\min\{2,p\},$
 then %, for all $\, t>0,$
 \begin{equation}\label{mar2}
 \omega_k(f,t)_{L_{p}}  \lesssim
 t^k \left( \int_{t}^\infty \Big[
u^{-k } \omega_{k+m}(f,u)_{L_{{p}}}\Big]^q
\frac{du}{u}\right)^{1/q} \quad
\mbox{for all } \ t>0 \mbox{ and } f \in L_p( \mathbb R^n).
\end{equation}
Observe the natural formal passage from (\ref{mar2}) to (\ref{mar1}) when
$\, p \to 1+.$\\
In 2008 F. Dai, Z. Ditzian and S. Tikhonov \cite{ddt} derived an
improvement of (\ref{trivest}): If $\,k, m, n \in {\mathbb
N},$   $\, 1<p<\infty,$ and $ r=\max \{2, p\},$  then %, for all $\, t>0,$
\begin{equation}\label{ddt1}
t^k
\left( \int_{t}^\infty \Big[ u^{-k }
\omega_{k+m}(f,u)_{L_{{p}}}\Big]^r
\frac{du}{u}\right)^{1/r} \lesssim \omega_k(f,t)_{L_{p}} \quad
\mbox{for all } \ t>0 \mbox{ and } f \in L_p( \mathbb R^n).
\end{equation}
Observe again the natural formal passage
from (\ref{ddt1}) to (\ref{trivest}), this time when $\, p \to \infty.$
We call (\ref{ddt1}) a reverse Marchaud inequality (in \cite{ddt} it is
called a sharp Jackson inequality).

\smallskip
Consider now inequalities for moduli of smoothness in different
Lebesgue metrics.
In 1968 P.L. Ul'yanov \cite{ul1} proved  such an inequality
  for  periodic functions in $\,
L_p(\mathbb T).$ Its $\, \mathbb R^n$-counterpart reads as
follows (see, e.g., \cite{bom}):
If $k, n\in {\mathbb N},$ $1 \le p<\infty$,
$0<\delta < \min\{n/p, k\}$,
  and $1/p^*=1/p-\delta/n,$ then
%\footnote{S: I replaced in (1.6)-(1.8) $\sigma\to\delta$ to be consistent with (1.18) and (1.19)}%, for $\, t\to 0+,$
%If $1<p<p^*< \infty$ and $k\in {\mathbb N},$ then, for $\, t\to 0+,$
  \begin{equation}\label{ulul}
\omega_k(f,t)_{L_{p^*}}\lesssim
\left( \int_{0}^t \Big[ u^{-\delta}
\omega_{k}(f,u)_{L_{{p}}}\Big]^{p^*} \frac{du}{u}\right)^{1/p^*}
\quad  \mbox{as } \ \,t\to 0+%\frac1{p^*}=\frac1p-\frac{\sigma}n, \ \ 0<\sigma < n/p.
\end{equation}
holds %as $t\to 0+,$
for all $f \in L_p(\mathbb R^n)$ (for which the right-hand side of\eqref{ulul} is finite).
\footnotemark\footnotetext{\ One can show that if $f \in L_p(\mathbb R^n)$  and the right-hand side of \eqref{ulul} is finite
for some $t>0,$ then $f \in L_{p^*}(\mathbb R^n)$  and so the modulus of smoothness appearing on the left-hand side
of \eqref{ulul} is well defined.  Note that we always look at inequalities involving moduli of smoothness
in different metrics at this way. One can also show that if $f \in L_p(\mathbb R^n)$ and the right-hand side of \eqref{ulul} is finite for some $t>0,$ then it is  finite for all $t>0$ - cf. Remark \ref{remark---r} mentioned below).
}

In 1988  V.I.~Kolyada \cite{kolyada} gave a definite strengthening of
(\ref{ulul}) on $\, L_p(\mathbb T^n).$  In the $\, \mathbb R^n$-setting
his result is the following (see \cite{goldman}): %\\

\noindent
%Suppose that $\, 1<p<p^*<\infty$, $k\in \mathbb{N},$ and $\, n \ge 1$, or $\, p=1$ and $\, n\ge 2.$
%If $\, f \in L_p(\mathbb R^n)$ and $\, 1/p^* = 1/p -
%\sigma/n$, $\,\sigma <k$, then, for $\, t\to 0+,$
Suppose that  $k,n\in \mathbb{N},$ and either $1<p<\infty$ and $\, n \ge 1$, or $\, p=1$ and $\, n\ge
2.$ If %$\, f \in L_p(\mathbb R^n)$ and
$0<\delta < \min\{n/p, k\}$ and $ 1/p^* = 1/p -\delta/n,$ then, for all $f\in L_p(\mathbb R^n)$, %, for $\, t\to 0+,$
%\begin{equation}\label{kol1}
%t^{1-\sigma} \left( \int_{t}^{\infty}[u^{\sigma-1}\omega_1(f,u)_{L_{p^*}}]^p
%\frac{du}{u}  \right)^{1/p} \lesssim
%\left(  \int_{0}^{t}[u^{-\sigma}\omega_1(f,u)_{L_p}]^{p^*}
%\frac{du}{u} \right)^{1/{p^*}} .
%\end{equation}
% $\, 0<\sigma=n(1/p -1/{p^*}) <k$, then %, for $\, 0<t <1$,
 %and $\, \omega$ the (first order) modulus of continuity,
\begin{equation}\label{kol1}
\hskip-0,5cm
t^{k-\delta} \left( \int_{t}^{\infty}\![u^{\delta-k}\omega_k(f,u)_{L_{p^*}}]^p
\frac{du}{u}
\right)^{1/p}\! \lesssim \left( \int_{0}^{t}[u^{-\delta}\omega_k(f,u)_{L_{p}}]^{p^*}
\frac{du}{u} \right)^{1/{p^*}}\! \ \   \mbox{as } \ t\to 0+.
\end{equation}

Another   extension
%The second extension
 of (\ref{ulul}), which is
 not comparable with inequality (\ref{kol1}), is the so-called sharp Ul'yanov inequality proved in 2010 %  \cite{simtik, tre4}
independently in
 \cite{simtik} and \cite{tre4}:%  ({\bf discuss with Walter }):
\\
% If $1<p<p^*< \infty$, then, for all $\, t>0,$
If  $k,n \in {\mathbb N},$ $1<p<\infty,$  $0<\delta < n/p$,  and $1/p^*=1/p-\delta/n,$ then, for all $f\in L_p(\mathbb R^n),$
%  $f\in L_p(\Bbb R^n),\,n \in \Bbb N,$ and  $1<p< \infty$,
 % then, for $\, t\to 0+,$
% If $f\in L_p(\Bbb R^n),\,n \in \Bbb N,$ and  $1<p<p^*< \infty$,
%  then, for $\, t\to 0+,$
 % and assume $\, 0<\sigma=n(1/p -1/{p^*})$, then %, for $\, 0<t <1$,
 %and $\, \omega$ the (first order) modulus of continuity,
\begin{equation}\label{kol12}
\omega_k(f,t)_{{L_{p^*}}}
\lesssim \left( \int_{0}^{t}[u^{-\delta}\omega_{k+\delta}(f,u)_{L_{p}}]^{p^*}
\frac{du}{u} \right)^{1/{p^*}} \ \   \mbox{as } \ t\to 0+.
%\qquad  \frac1{p^*}=\frac1p-\frac{\sigma}n.
\end{equation}

In the case $p=1$ (\ref{kol12}) does not hold in general \cite[Theorem~1(B)]{tikhonov} and it requires some modifications \cite[Rem.~6.20]{paper with oscar} (see also \cite[Theorem~1(A)]{tikhonov}).
If  $k,n \in {\mathbb N},$   $0<\delta < n$,  and $1/p^*=1-\delta/n,$ then, for all $f\in L_1(\mathbb R^n),$
\begin{equation*}
\omega_k(f,t)_{{L_{p^*}}}
\lesssim \left(
\int_{0}^
{
t
(|\ln t|)^{1/(k p^*)}
}[u^{-\delta}\omega_{k+\delta}(f,u)_{L_{1}}]^{p^*}
\frac{du}{u} \right)^{1/{p^*}} \ \   \mbox{as } \ t\to 0+.
%\qquad  \frac1{p^*}=\frac1p-\frac{\sigma}n.
\end{equation*}

The importance of these inequalities instigated much research
in various areas of analysis (theory of function spaces, approximation theory,
interpolation theory)
 and led to
 numerous publications. We mention only a few recent papers:
  \cite{%dom1,
   dom, dom++, dom+++, oscar, gorb, hatr, jum,  kolo, kolyada1, pez, tikhonov, tre4}.
 %  In this paper, we discuss various
  Basic properties of moduli of smoothness of functions from $L_p(\mathbb R^n),$ $0 < p \le \infty$, are given in \cite{kolo1}.

%In recent years numerous contributions -- see, e.g., \cite{diti1,diti2, kolo,  simtik, titr,tre4,trwe2} --
%extended and hhh

%\bigskip
\subsection{Inequalities for moduli of smoothness on Lorentz spaces}
We say that a~measurable function $f$ belongs to  the Lorentz space
$\, L_{p,r}=L_{p,r} ({\mathbb R}^n),\, 1\le p,r\le \infty,\, $
if  (see, e.g., \cite[Section~4.4]{besh})
\[
\| f\|_{p,r}:= \left\{ \begin{array}{l@{\; \; ,\quad}l}
 \Big(\int_{0}^{\infty}[t^{1/p}\,
f^*(t)]^r \frac{dt}{t}\Big) ^{1/r}<\infty  & r<\infty,\\
\quad \sup\limits_{t>0}t^{1/p}\, f^*(t)<\infty  & r=\infty ,\\
\end{array} \right.
\]
where $\, f^*$ denotes the non-increasing rearrangement of $\, f.$
Thus $\, L_p =L_{p,p}$ and $\, \| f \|_p =\| f\|_{p,p}.$

The next  statements extend the inequalities mentioned above to Lorentz spaces.

\smallskip
{\sc Proposition 1.1.} {\it If $\,n \in {\mathbb N},\, 1 < p < \infty,\;  1 \le q_0, q_1,
r_0,r_1 \le \infty ,\; r_0 \le r_1,$ and  $\, \beta >0,$ then, for all %$\, t> 0$ and
 $f \in L_{p,r_0}({\mathbb R}^n)$}:\\[1mm]
{\rm (A)} {\sf Marchaud-type inequality.}
\begin{equation}\label{march}
\omega_\beta(f,t)_{L_{p,r_1}}\lesssim  t^\beta
\left(  \int_{t}^\infty \Big[ u^{-\beta }
\omega_{\beta+\sigma}(f,u)_{L_{p,r_0}}\Big]^{q_0}
\frac{du}{u}\right)^{1/q_0}  \ \   \mbox{as } \ t\to 0+
%\quad % ,\; 1<p<\infty,
\end{equation}
{\it provided $\, \sigma>0$ and $\, q_0 \le \min \{ p,2,r_1\}$ if $\, p
\not=2.$ If $\, p=2$ and $ \, r_0 \le 2$, then take $\, q_0 \le \min \{
2,r_1\},$ and in the case $\, p=2,\; r_0 > 2$ one has to take
$\, q_0 <2.$
}
\\[1mm]
\medskip
{\rm (B)} {\sf Reverse Marchaud-type inequality.}
\begin{equation}\label{revmarch}
t^\beta  \left(  \int_{t}^\infty \Big[ u^{-\gamma}
\omega_{\beta+\gamma}(f,u)_{L_{p,r_1}}\Big]^{q_1}
\frac{du}{u}\right)^{1/q_1} \lesssim \omega_\beta(f,t)_{L_{p,r_0}} \ \   \mbox{as } \ t\to 0+
\end{equation}
({\it with usual modification if $q_1=\infty)$
 %\footnotemark\footnotetext{\ S: changed}
 provided $\, \gamma>0$ and $\, q_1 \ge \max \{ p,2,r_0\}$ if $\,
p \not=2.$ If $\, p=2$ and $ \, r_1 \ge 2$, then take $\, q_1 \ge \max \{
2,r_0\},$ and in the case $\, p=2,\; r_1 < 2$ one has to take
$\, q_1 >2.$
}

\bigskip
Denote by $\, W_p^k({\mathbb R}^n), \;
1 \le p <\infty,\; k \in {\mathbb N},$ the Sobolev space of order
$\, k,$ i.e., $\, f\in~W_p^k({\mathbb R}^n)$ if $\, f$ and all its
(weak)  derivatives up to the order $\, k$ belong to $\,
L_p({\mathbb R}^n).$
It is well known that, by Taylor's formula,
%for $\, f \in W_p^k({\mathbb R}^n),$
%If $\, f \in W_p^k({\mathbb R}^n),$
%then it is well-known that, by Taylor's formula,
\[
\omega_{m+k}(f,t)_{L_p} \lesssim t^k \sum_{|\mu|=k}^{}\omega_m (D^{\,\mu}
f,t)_{L_p} \, , \quad m \in {\mathbb N},\; \mu \in {\mathbb N}_0^n ,
\mbox{ for all}\, f \in W_p^k({\mathbb R}^n) \ \mbox{and} \ t>0.
\]
Here we use the multi-index notation $\, |\mu|:=
\sum_{j=1}^{n}\mu_j ,\;  D^{\,\mu} = \prod_{j=1}^{n}\, (\partial
/\partial x_j)^{\mu_j}\, .$
We want to state an improvement and some type of reverse of this inequality in the case $\,
1<p<\infty.$ To this end, we need Besov spaces and Riesz potential spaces, both
modelled upon Lorentz spaces. % For the history of these spaces, see, e.g., \cite{besoy}.

\bigskip
We make use of  the
Fourier analytical approach in $\, {\mathcal S}'$ (cf. \cite{belo}):\\
Take a $ C^\infty$-function $\varphi$ such that
\begin{equation}\label{SmoothFunction}
        \text{supp } \varphi \subset \{x \in \mathbb{R}^n : |x| \leq
7/4\} \quad \text{ and }\;  \varphi(x) = 1 \text{ if } |x| \leq 3/2.
\end{equation}
For $j \in \mathbb{Z}$ and $x \in \mathbb{R}^n$, let
\begin{equation}\label{resolution}
        \varphi_j(x) = \varphi(2^{-j}x) - \varphi(2^{-j+1}x).
\end{equation}
The sequence $\{\varphi_j\}_{j \in \mathbb{Z}}$ is a smooth
dyadic resolution of unity, i.e.,  $ 1=\sum_{j=-\infty}^\infty
\varphi_j (x) $ for all $ x \in \mathbb{R}^n,\, x\not= 0$.

\medskip
Let $1 \leq p,q,r \leq \infty$ and $\sigma>0$. The Besov space
$B^{\sigma}_{(p,r),q}(\mathbb{R}^n)$ consists of all $f\in~L_{p,r}
({\mathbb R}^n)$ such that
\begin{equation}\label{besov1}
|f |_{B^{\,\sigma}_{(p,r),q}} = \Bigg(\sum_{j=-\infty}^\infty
\Big[2^{j \sigma} \| {\mathcal F}^{-1}[\varphi_j]*f
\|_{L_{p,r}}\Big]^q\Bigg)^{1/q} < \infty
\end{equation}
(the sum should be replaced by the supremum if $q=\infty$).
Here the symbol $\, {\mathcal F}^{-1}$ is used for the inverse Fourier
transform.   An equivalent
characterization of this semi-norm in terms of moduli of smoothness is
given  by
\begin{equation}\label{besov2}
    |f |^{*}_{B^{\,\sigma}_{(p,r),q}} = \left( \int_{0}^{\infty}
\Big[t^{-\sigma} \omega_k(f,t)_{L_{p,r}}\Big]^q\frac{dt}{t}
\right)^{1/q} ,\qquad 0<\sigma<k.
\end{equation}

\medskip
The  Riesz potential space $\, H^{\sigma}_{p,r}({\mathbb R}^n),\, \sigma
\ge 0,$ consists of all  $f \in L_{p,r}({\mathbb R}^n)$ for which
\begin{equation}\label{riesz}
|f|_{H^{\sigma}_{p,r}} := \|D_R^{\,\sigma} f\|_{L_{p,r}} <\infty \, ,\,\,\,
\mbox{where }\; \, D_R^{\,\sigma} f:= \sum_{j=-\infty}^{\infty} {\mathcal
F}^{-1} [|\xi|^\sigma\varphi_j]*f
\end{equation}
(the $\,\sigma$-th Riesz derivative)
converges in $\, {\mathcal S}'$ to an $\, L_{p,r}({\mathbb
R}^n)$-function. Note that $\, W_p^k=H_{p,p}^k$ if $\, 1<p<\infty.$

\smallskip

{\sc Proposition 1.2.} {\it Let $\, n \in {\mathbb N}, \; 1 < p < \infty,\;
 1 \le q_0, q_1 \le \infty ,\; 1 \le r_0 =r_1=r\le \infty,$ and $ \, \beta,\sigma >0.$\\[2mm]
{\rm (A)} \;
 If $\, f \in L_{p,r}({\mathbb R}^n)\,$ then,
under the assumptions
on the parameters $\, q_0$ and $r$ of  \, \mbox{{\rm Proposition}}
\mbox{{\rm  1.1 (A)}},  for all $\,t>0$,
\begin{equation}\label{deriv1}
\omega_\sigma (D^\beta_R f,t)_{L_{p,r}} \lesssim \left( \int_{0}^{t}
\Big[u^{-\beta} \omega_{\beta+\sigma}(f,u)_{L_{p,r}}\Big]^{q_0}
\frac{du}{u}\right)^{1/q_0}. % \; \mbox{ for all } \, t>0.
\end{equation}
In particular, if $\, \beta = m$ and $\sigma=k \in {\mathbb N},$ then,
for all $\, \mu\in {\mathbb N}_0^n$ with $\, |\mu|= m$,
\begin{equation}\label{deriv2}
\omega_k (D^\mu f,t)_{L_{p,r}} \lesssim \left( \int_{0}^{t} \Big[u^{-m}
\omega_{k+m} (f,u)_{L_{p,r}}\Big]^{q_0} \frac{du}{u} \right)^{1/q_0} .
\end{equation}

% \footnote{S: The reverse part (B) was added}

\medskip
{\rm (B)}  \; If $\, f \in H^\beta_{p,r}({\mathbb R}^n)\, $ then, under
the assumptions on the parameters $\, q_1$ and $r$ of  \,
\mbox{{\rm Proposition}}  \mbox{{\rm  1.1 (B)}},   for all $\,t>0$,
\begin{equation}\label{deriv1*}
 \left( \int_{0}^{t}
\Big[u^{-\beta} \omega_{\beta+\sigma}(f,u)_{L_{p,r}}\Big]^{q_1}
\frac{du}{u}\right)^{1/q_1}  \lesssim  \omega_\sigma (D^\beta_R
f,t)_{L_{p,r}}. %  \; \; \mbox{ for all } \, t>0.
\end{equation}
In particular, if $\, \beta = m$ and $\sigma=k \in {\mathbb N},$ then
\[
\left( \int_{0}^{t}
\Big[u^{-m} \omega_{m+k}(f,u)_{L_{p,r}}\Big]^{q_1}
\frac{du}{u}\right)^{1/q_1}  \lesssim  \sup _{j=1,\ldots ,n}\omega_k
\Big( \frac{\partial^m f}{\partial x_j^m},t \Big)_{L_{p,r}}. % \; \; \mbox{ for all } \, t>0.
\]
}

%\smallskip
Finally consider inequalities between moduli of smoothness in
different metrics.

\smallskip
{\sc Proposition 1.3.} {\it Suppose $n\in \mathbb{N},$ $1 < p < \infty,\; 0<\delta<n/p, \,1/p^*=1/p-\delta/n,
1 \le q_0,q_1,
r_0,r_1 \le \infty ,\;$ and $\beta >0.$ %and$\; {\displaystyle \frac{1}{p^*} = \frac{1}{p} -\frac{\delta}{n}}. $ }
\\[1mm]
{\rm (A)} {\sf Sharp Ul'yanov inequality.} {\it If $r_0, q_1 \le r_1,$
then, for all $\, t>0$ and $f \in L_{p,r_0}(\mathbb{R}^n),$ }
\begin{equation}\label{ulya}
\omega_\beta (f,t)_{L_{p^*,r_1}} \lesssim \Big(
\int_{0}^{t}
[u^{-\delta}  \, \omega_{\beta +\delta}(f,u)_{L_{p,r_0}}]^{q_1}
\frac{du}{u}\Big)^{1/q_1}.
\end{equation}
\\[1mm]
{\rm (B)} {\sf Kolyada-type inequality.} {\it If $\, r_0 \le q_0,\; q_1 \le
r_1,$ then, for all $\, t>0$ and $f \in L_{p,r_0}(\mathbb{R}^n),$ }
\begin{equation}\label{kolya}
t^{\beta} \left( \int_{t}^{\infty} \! [\, u^{-\beta}
\omega_{\beta +\delta}(f,u)_{L_{p^*,r_1}}]^{q_0}
\frac{du}{u}\right)^{1/q_0} \!
\lesssim \left( \int_{0}^{t}\! [u^{-\delta}\omega_{\beta
+\delta}(f,u)_{L_{p,r_0}} ]^{q_1} \frac{du}{u} \right)^{1/q_1}\! \! \!
\! \! \!.
\end{equation}
}
\section{Remarks and proofs in outlines }
Peetre's $\, K$-functional $K_0$ for the compatible couple $\, (L_{p,r},
H^\sigma_{p,r})$ plays a decisive role in the proofs of Propositions 1.1--1.3.
It is defined by
\[
K_0(f,t;L_{p,r},H^\sigma_{p,r}) = \inf_{g\in H^\sigma_{p,r}} (\|
f-g\|_{p,r} +t |g|_{H^\sigma_{p,r}}), \qquad f\in L_{p,r},\quad t>0.
\]
We also need the characterization, for
$1<p<\infty, \; \sigma >0,\;  1\le r \le \infty,$
\begin{equation}\label{wilmes}
K_0(f,t^\sigma;L_{p,r},H^\sigma_{p,r}) \approx
\omega_\sigma(f,t)_{L_{p,r}},\quad f\in L_{p,r},\quad t>0,
\end{equation}
(see \cite{wil} and its extension in
\cite[(1.13)]{gott})
and the identification of the interpolation space %(see \cite[Theorem 6.7]{besoy})
 given by
\[
(L_{p,r} , H_{p,r}^\sigma)_{\theta,q}= B^{\,\theta \sigma}_{(p,r),q}\, ,\qquad
\sigma >0,\; 0<\theta<1,\;1<p<\infty,\, 1\le r,q \le \infty,
\]
where $\, (\cdot , \cdot )_{\theta,q}\,$ denotes Peetre's real
interpolation method. The improvements and extensions of inequalities
(\ref{mar1})--(\ref{kol12}) can
be easily proved via the Holmstedt formulas \cite[Section~5.2]{besh}.
One only needs to exchange  in \cite{tre4} the embeddings between Besov
and potential spaces modelled on Lebesgue
spaces by the corresponding ones modelled on Lorentz spaces.
Therefore, we only sketch the proofs of the propositions stated in Subsection~1.2.

\smallskip
Concerning (\ref{march}) and (\ref{revmarch}), note that, under the
restrictions on $\, q_0$ and $\, q_1$ given in Proposition 1.1, the following
embeddings  are true:
\begin{equation}\label{marchemb}
B^{\,\sigma}_{(p,r_0),q_0} \hookrightarrow H^{\sigma}_{p,r_1}
%\; \mbox{ if}\;
%1 \le p < \infty,\;  1 \le q_0, r_0,r_1 \le \infty ,\; r_0 \le r_1,
\end{equation}
 if
$1 \le p < \infty,$\;  $1 \le q_0, r_0,r_1 \le \infty,$\; $r_0 \le r_1$
(see Theorem 1.1,  (iv)--(vi) in \cite{see})\hspace{.05cm}
and
\begin{equation}\label{revmarchemb}
H^{\gamma}_{p,r_0} \hookrightarrow B^\gamma_{(p,r_1),q_1}
%\; \mbox{ if}
%\; 1 \le p < \infty, \;  1 \le q_1, r_0,r_1 \le \infty,\; r_0 \le r_1.
\end{equation}
{ if}
$ 1 \le p < \infty,$ \;  $1 \le q_1, r_0,r_1 \le \infty,$\; $r_0 \le r_1$
(see Theorem 1.2, (iv)--(vi) in \cite{see}).

%resp.

\medskip
%{\sc Remark 2.1.}
\begin{remark}\label{rem2.1}
In parts (i) and (ii) of this remark we assume the
same  restrictions on the parameters under which (\ref{march}) and
(\ref{revmarch}) hold,  respectively.

%\smallskip
(i) Divide equation (\ref{march}) by $\, t^{-\beta}$
and let $\, t \to 0+.$ Then on the right-hand side one gets $\, |f|^*
_{B^{\,\beta}_{(p,r_0),q_0}}. $ One way how to handle the left-hand side is to
introduce the generalized Weierstra{\ss} means
$\, W_t^\beta f = {\mathcal F}^{-1}[e^{(t|\xi|)^\beta}] *f $. By
\cite[(1.11)]{gott},
 one has
\[
K_0(f,t^\beta; L_{p,r_1}, H^\beta_{p,r_1}) \approx \| f- W_t^\beta
f\|_{p,r_1},\, \quad f\in L_{p,r_1},\quad t>0.
\]
Also, by
\cite[Corollary~3.4.11]{berens},
\[
\lim _{t \to 0+} t^{-\beta} \| f - W_t^\beta f \|_{p,r_1} \approx |f|
_{H^{\beta}_{p,r_1}}\, .
\]
Hence, in view of (\ref{wilmes}), (\ref{march}) implies
(\ref{marchemb}).  In particular,
(\ref{march}) and (\ref{marchemb}) are equivalent assertions. This means the following:
if  inequality  (\ref{march}) holds under certain range of parameters, then embedding  (\ref{marchemb}) is valid
for such parameters and vice versa.

%\smallskip
(ii) %Suppose
If (\ref{revmarch}) is true, then its right-hand side is
equivalent
to $\, K_0(f,t^\beta;L_{p,r_0}, H^\beta_{p,r_0})$, which trivially is smaller
than $\, t^\beta |f|_{H^\beta_{p,r_0}}.$ Dividing  inequality
(\ref{revmarch}) by $\, t^\beta$, one gets
\[
 \left(  \int_{t}^\infty \Big[
u^{-\gamma} \omega_{\beta+\gamma}(f,u)_{L_{p,r_1}}\Big]^{q_1}
\frac{du}{u}\right)^{1/q_1} \lesssim |f|_{H^\beta_{p,r_0}}
\]
uniformly in $\, t>0,$ and (\ref{revmarchemb}) follows. Thus,
(\ref{revmarch}) and (\ref{revmarchemb}) are again equivalent statements.
\hfill                                                 $\Box$
\end{remark}

\smallskip
Concerning Proposition 1.2 (A), let $\, f \in B^{\,\beta}_{(p,r),q_0}$. Then, by
(\ref{marchemb}), $\, f \in H^\beta_{p,r}\, ,$ hence $\, D^{\,\beta}_R f \in
L_{p,r} \, ,$ and
\begin{equation}\label{mirek1}
 \omega_\sigma(D^{\,\beta}_R f,t)_{L_{p,r}} \lesssim \| D^{\,\beta}_R
f -h\|_{L_{p,r}} + t^\sigma |h|_{H^\sigma_{p,r}}\quad\mbox{for all}\ h \in
H^\sigma_{p,r}.
\end{equation}
% for all $\, h \in
%H^\sigma_{p,r}.$ \\
If $\, g \in H^{\sigma+\beta}_{p,r}$, then $\, D_R^{\,\beta}\, g\in
H^{\sigma}_{p,r},\; |D_R^{\,\beta} \,g |_{H^{\sigma}_{p,r}}
=|g|_{H^{\sigma + \beta}_{p,r}}\, $ and $\, \| D_R^{\,\beta}
(f-g)\|_{L_{p,r}} \lesssim |f-g|_{B^{\,\beta}_{(p,r),q_0}}\, .$ Now choose $\,
h=D^{\,\beta}_R\, g$ in (\ref{mirek1}) to obtain
\[
\omega_\sigma(D^{\,\beta}_R f,t)_{L_{p,r}} \lesssim
|f-g|_{B^{\,\beta}_{(p,r),q_0}} +t^\sigma |D_R^{\,\beta}\, g|_{H^{\sigma }_{p,r}}
\approx |f-g|_{(L_{p,r},H^{\beta+\sigma}_{p,r})_{\theta,q_0}} + t^\sigma
 |g|_{H^{\sigma + \beta}_{p,r}} ,
\]
where in Peetre's $\, (\cdot ,\cdot)_{\theta ,q_0}$-interpolation method
 one has to put \mbox{$\, \theta = \beta /(\beta+\sigma).$} Taking the
minimum  over all $\, g \in H^{\sigma + \beta}_{p,r}$ in the last
display and using the appropriate Holmstedt formula (\cite[p. 310]{besh}), we arrive at
(\ref{deriv1}).

\smallskip
Regarding (\ref{deriv2}), observe that the $\, j$-th Riesz transform $R_j$,
$\, 1 \le j \le n,$ (with the Fourier symbol $\, \xi_j /|\xi|,\;
\xi \in {\mathbb R}^n$)
 is a bounded operator from $\, L_p $ into \mbox{$\, L_p ,\;
1<p<\infty ,$} hence also bounded from $\, L_{p,r} $ into $\, L_{p,r} ,\;
1<p<\infty ,\; 1 \le r \le \infty .$ Now set $\, {\mathcal R}^\mu :=
\prod_{j=1}^{n} R_j^{\,\mu_j}$ to obtain $\, \| D^{\,\mu} f\|_{L_{p,r}} = \|
{\mathcal R}^\mu D^{\,|\mu|}_R f\|_{L_{p,r}} \lesssim  \| D^{\,|\mu|}_R
f\|_{L_{p,r}}\, .$ Hence,
\[
\omega_k(D^{\,\mu} f,t)_{L_{p,r}} = \sup _{|y|\le t} \| \Delta^k_y {\mathcal
R}^\mu D_R^{\,|\mu|} f\|_{L_{p,r}}= \sup _{|y|\le t} \| {\mathcal R}^\mu
\Delta^k_y D_R^{\,m} f\|_{L_{p,r}} \lesssim \omega_k(D_R^{\,m} f,t)_{L_{p,r}}
\]
and (\ref{deriv2}) follows from (\ref{deriv1}).

Concerning Proposition 1.2 (B), we follow the argument starting with (12.13)  in \cite{oscar}. Thus,
 by
 \cite[Lemma~1.4 with $\alpha=0$]{gott},
\[
\omega_\sigma (D^\beta_R f,t)_{L_{p,r}} \approx  K_0(D^\beta_R
f,t^\sigma; L_{p,r}, H^\sigma_{p,r}) \approx \| D^\beta_R(f-V_t
f)\|_{p,r} + t^\sigma |D^\beta_R V_t f|_{H^\sigma_{p,r}},\,
\]
where $\, V_tf$ are the de la
Vall{\'e}e-Poussin means of $\, f.$
Now use
 Theorem 1.2 (iv) - (vi) in \cite{see}, subsequently,
 the lifting property of Besov spaces, and again \cite[Lemma~1.4]{gott} to obtain
\begin{eqnarray*}
\omega_\sigma (D^\beta_R f,t)_{L_{p,r}}
&\gtrsim& | D^\beta_R(f-V_t f)|_{B^0_{(p,r),q_1}} + t^\sigma |D^\beta_R V_t
f|_{H^\sigma_{p,r}}
\\
&\approx& | f-V_t f|_{B^\beta_{(p,r),q_1}} +
t^\sigma |V_t f|_{H^{\beta+\sigma}_{p,r}}
\approx  K_0(f,t^\sigma; B^\beta_{(p,r),q_1},H^{\beta+\sigma}_{p,r}).
\end{eqnarray*}
Since $\, B^\beta_{(p,r),q_1} =
(L_{p,r},H_{p,r}^{\beta+\sigma})_{\theta,q_1}\, , \; \beta= \theta
(\beta+\sigma)$ (see, e.g.,  \cite[Theorem 6.3.1]{belo}), hence $\, 1-\theta
= \sigma/(\beta+\sigma)$ and, therefore, by the  Holmstedt
formula,  we finally derive
\[
\left( \int_{0}^{t}[ u^{-\beta} \omega_{\beta+\sigma}(f,u)_{L_{p,r}}
]^{q_1} \frac{du}{u} \right) ^{1/q_1} \lesssim
\omega_\sigma (D^\beta_R f,t)_{L_{p,r}}\, .
\]
In particular, if $\, \beta =m \in {\mathbb N},$ then, for even $\, m$
and hence $\, \gamma_j \in 2 {\mathbb N_0}$,
\[
 D^\beta_R f =
{\mathcal F}^{-1}\big[(\xi_1^2+ \cdots + \xi_n^2)^{m/2}{\mathcal F}[f]\big]
=  \sum_{|\gamma|=m}^{} {\mathcal F}^{-1}\big[\prod_{j=1}^{n}
\xi_j^{\gamma_j} {\mathcal F}[f]\big]\, ,\qquad \gamma \in {\mathbb N_0}^n.
\]
 If $\, \gamma_j$ is odd, observe that
\[
 |\xi|^m = |\xi|^{m-1}(\xi_1^2+ \cdots +
\xi_n^2)/|\xi|=|\xi|^{m-1}(\xi_1\cdot\frac{\xi_1}{|\xi|} + \cdots + \xi_n
\cdot \frac{\xi_n}{|\xi|})
\]
and that $\, \xi_j/|\xi|$ is the symbol of the $\, j$-th Riesz transform
being a bounded operator on $\, L^p,\; 1<p<\infty,$ and hence also on the
Lorentz spaces under consideration. Therefore,
when $\, \sigma=k \in {\mathbb N},$
 \[
\omega_k (D^m_R f,t)_{L_{p,r}} \lesssim \sup _{|\gamma|=m} \omega_k \left(
\frac{\partial^\gamma f}{\partial x^\gamma},t \right) _{L_{p,r}}
\]
and hence the assertion follows along the lines of the paper \cite{oscar}.
\hfill $\Box$

\smallskip
For the proof of
Proposition 1.3,
% (\ref{ulya}),
suppose that $\, \beta,\delta>0$ and that
$\, p,p^*$ and $\, \delta$ satisfy the assumptions. % of Proposition 1.3.
By Theorem 1.1 (iii) in \cite{see},
\begin{equation}\label{embBH}
B^{\,\delta}_{(p,r_0),q_1} \hookrightarrow L_{p^*,r_1} \,  \qquad \mbox{ if }
\quad 1 \le  q_1 \le r_1 \le \infty,\; \; 1\le r_0 \le \infty.
\end{equation}
Moreover, Theorem 1.6 (i, iii) in \cite{see} contains a version of the
Hardy-Littlewood-Sobolev theorem on fractional integration, which states that%, for $$,
\begin{equation}\label{embHB--}
H^{\beta +\delta}_{p,r_0} \hookrightarrow H^{\beta }_{p^*,r_1} \,
\qquad \mbox{ if }\quad 1\le r_0 \le r_1 \le \infty,\quad \beta\ge 0.
\end{equation}
  %{\bf Walter, please check $\beta\ge 0$ here.}
The use of Holmstedt's formula completes the proof of (\ref{ulya}).
%\medskip

Concerning the proof of (\ref{kolya}), we need the embedding
\begin{equation}\label{embHB}
H^{\sigma +\delta}_{p,r_0} \hookrightarrow B^{\,\sigma}_{(p^*,r_1),q_0}\,
,\quad \mbox{ if }\, 1\le r_0 \le q_0 \le \infty,\; \; 1 \le r_1 \le \infty ,
\end{equation}
which holds by
\cite[Theorem 1.2 (iii)]{see}, and also embedding
(\ref{embBH}), which requires % giving
the additional restriction $\, q_1\le r_1\, .$ \hfill    $\Box$

 %\footnote{S: This remark was added}
\begin{remark}
Similarly to  Remark \ref{rem2.1}, we may derive that each of inequalities (\ref{deriv1})--(\ref{kolya}) implies the corresponding embedding.
For example, let (\ref{ulya}) be true. Since
$H^{\beta +\delta}_{p,r_0}=\{f\in L_{p,r_0}: \omega_{\beta +\delta}(f,u)_{L_{p,r_0}}\le Cu^{\beta +\delta} \}$,
inequality (\ref{ulya}) implies
$H^{\beta +\delta}_{p,r_0} \hookrightarrow H^{\beta }_{p^*,r_1}$, which is
(\ref{embHB--}). Likewise,  (\ref{kolya}) yields  (\ref{embHB}).
\end{remark}

%\smallskip
\begin{remark}%{\sc Remark 2.3.}
 Let $n\in \mathbb{N},$ $1 < p < \infty,\; 0<\delta<n/p, \,1/p^*=1/p-\delta/n,$ and $\beta>0$.

(a) The combination of the Kolyada inequality
%(\ref{kolya})
with the Marchaud inequality %(\ref{march})
leads to a special case of the Ul'yanov
inequality. If  $\, 1\le r:=r_0=r_1=q_0=q_1\le \infty$ and $\, r \le \min \{ p^*,2\}$, then,
for all $ f\in L_{p,r}(\mathbb{R}^n)$,
%for all $\, t>0$ and $f \in L_{p,r}(\mathbb{R}^n),$
\begin{equation}\label{koluly}
\omega_\beta (f,t)_{L_{p^*,r}} \lesssim \Big(
\int_{0}^{t}
[u^{-\delta}  \, \omega_{\beta +\delta}(f,u)_{L_{p,r}}]^{r}
\frac{du}{u}\Big)^{1/r}  \ \   \mbox{as } \ t\to 0+.
%, \quad 0< \frac{1}{p^*} =\frac{1}{p}- \frac{\delta}{n} ,
\end{equation}
%where $\, 1<p<p^*<\infty.$
This follows on  applying  to the left-hand side of (\ref{kolya})
 Marchaud inequality (\ref{march}), where we
replace $\, p$ by $\, p^*.$

%\smallskip
(b) Similarly, if $\, 1\le r:= r_0=r_1=q_1\le \infty, $\, $r\ge
\max \{p^*,2\}$, and $\gamma >0,$ then the combination of Ul'yanov inequality (\ref{ulya})
and reverse Marchaud inequality (\ref{revmarch}) (where $p$ is replaced by $p^*$)
yields a special case of the Kolyada inequality, namely,
for all  $f \in L_{p,r}(\mathbb{R}^n),$
%for all $\,t>0,$
\begin{equation}\label{ulykol}
t^\beta
\Bigg (  \int_{t}^\infty \! \Big[ u^{-\beta}
\omega_{\beta +\gamma}(f,u)_{L_{p^*,r}}\Big]^{r}
\frac{du}{u}\Bigg)^{1/r} \!\!\!\! \lesssim
\!\Big( \int_{0}^{t}
[u^{-\delta}  \, \omega_{\beta +\delta}(f,u)_{L_{p,r}}]^{r}
\frac{du}{u}\Big)^{1/r}  \   \mbox{as } \ t\to 0+.
\end{equation}
%where $\, 1<p<p^*<\infty,\; 1/p^*=1/p - \delta/n,$ and $\, \gamma>0.$
Note that in the case $\, 0<\gamma <\delta$ the order of the modulus of
smoothness on the left-hand side is smaller than the one on the right-hand
side.

%\medskip
%(c) \footnote{Walter, this remark is not clear in general and particularly,   "from
%given inequalities"}
%In Remark \ref{rem2.1}, in the framework of Lorentz spaces, some embeddings of
%Besov spaces, defined classically via differences, are deduced from
%given inequalities.  Note that, when deriving these inequalities,  we used the
%same embeddings of Besov spaces, but introduced  via the Fourier analytic approach.
\end{remark}

\medskip
\subsection{Sharp Ul'yanov and Kolyada inequalities for
$p=1$
}
As it was mentioned above both (\ref{ulya}) and (\ref{kolya}) do not hold in general when $p=1$. However, under some additional conditions on parameters
 both results are still valid even in the Lorentz space setting.

\smallskip
{\sc Proposition 1.3\! $'$.} {\it
 Suppose  $n\in \N,$ $n\ge 2$, $1\le \delta <n, \,1/p^*=1-\delta/n,
\, 1 \le q_1\le r_1 \le \infty
$ and $\beta >0, \beta+\delta\in {\mathbb N}.$
}
\\[1mm]
{\rm (A)} %{\sf Sharp Ul'yanov inequality.}
{\it
Then, for all $\, t>0$ and for all $\, f \in
L_1({\mathbb R}^n),$
 }
\begin{equation}\label{ulya-p=1}
\omega_\beta(f,t)_{L_{p^*,r_1}} \lesssim \left( \int_{0}^{t}[u^{-\delta}
\omega_{\beta+\delta}(f,u)_{L_1} ]^{q_1}\frac{du}{u} \right)^{1/q_1}.
\end{equation}
\\[1mm]
{\rm (B)} %{\sf Kolyada-type inequality.}
 {\it If $1\le q_0
\le \infty$ then,
 for all $\, t>0$ and for all $\, f \in
L_1({\mathbb R}^n),$}
\[
t^\beta \left( \int_{t}^{\infty} [u^{-\beta} \omega_{\beta+\delta}(f,u)
_{L_{p^*,r_1}} ]^{q_0}\frac{du}{u}\right)^{1/q_0} \lesssim \left(
\int_{0}^{t}[u^{-\delta}\omega_{\beta+\delta} (f,u)_{L_1}]^{q_1}
\frac{du}{u} \right)^{1/q_1} .
\]
%\footnote{Walter,  can we just use   the following
%variant  of (\ref{wilmes}) (see \cite[Theorem 7]{wil})
%\[
%K_0(f,t^\sigma;L_{1},H^\sigma_{1}) \lesssim \omega_\sigma(f,t)_{L_1}\,
%,\qquad 0<\sigma \not= 2k-1,\; k \in {\mathbb N},\; t>0.
%\]
%}
{
{\it Proof of  Proposition 1.3\! $'$(A)}}.
If $g\in H_{p^*,r_1}^\beta$, then in  light of  (\ref{wilmes}), for all $f\in {L_{p^*,r_1}}$ and all positive $t$, % we have
\begin{equation}\label{wil}
\omega_\beta(f,t)_{L_{p^*,r_1}} \approx K_0(f,t^\beta;
L_{p^*,r_1},H_{p^*,r_1}^\beta) \lesssim \|f-g\|_{p^*,r_1}+t^\beta
|g|_{H_{p^*,r_1}^\beta} .
\end{equation}
Now we  take into account  the following result by Alvino \cite{alv} (appeared in 1977, rediscovered  by
Poornima   \cite{poo} in 1983 and  by
Tartar \cite{tartar} in 1998)
\begin{equation}\label{alv}
\| h\|_{n/(n-1),1} \lesssim \sum_{j=1}^{n} \left\| \frac{\partial
h}{\partial x_j} \right\|_1 \, , \qquad n \ge 2.
\end{equation}
Together with  H\"ormander's multiplier criterion and  \cite[Theorem 1.6
(iii)]{see}, this yields
\begin{equation}\label{potemb}
W_1^{\beta+\delta} \hookrightarrow W_{n/(n-1),1}^{\beta+\delta-1}
= H_{n/(n-1),1}^{\beta+\delta-1} \hookrightarrow H^\beta_{p^*,1}
\hookrightarrow H^\beta_{p^*,r_1}\qquad\mbox{if} \quad r_1 \ge 1
\end{equation}
and for the corresponding seminorms we have, for all $g\in {W^{\beta+\delta}_1}$,
\begin{equation}\label{potembsn}
|g|_{H^\beta_{p^*,r_1}} \lesssim |g|_{H^\beta_{p^*,1}} \lesssim
|g|_{W^{\beta+\delta}_1} \, , \qquad
0<\frac{1}{p^*}=1-\frac{\delta}{n}\, , \quad r_1 \ge 1.
 \end{equation}
Note that using  Alvino's result, we  necessarily assume $\, \delta \ge 1$.

\medskip
By  \cite[Theorem 1.1 (iii)]{see}, the first embedding below is valid, the second one
is elementary and, therefore,
\begin{equation}\label{besemb}
B^\delta_{(1,1),q_1} \hookrightarrow L_{p^*,q_1} \hookrightarrow L_{p^*,r_1}\,
, \qquad \|f\|_{p^*,r_1} \lesssim |f|_{B^\delta_{(1,1),q_1}}\,
\end{equation}
for all $f\in {B^\delta_{(1,1),q_1}}$ if $1 \le
q_1 \le r_1 \le \infty \, .$

Applying  estimates (\ref{potembsn}), (\ref{besemb}), and
(\ref{wil}), we arrive at
\[
\omega_\beta(f,t)_{L_{p^*,r_1}} \lesssim |f-g|_{B^\delta_{(1,1),q_1}} + t^\beta
|g|_{W_1^{\beta +\delta}}
\]
for all $\, g \in W_1^{\beta +\delta}.$
Together with Holmstedt's formula, this  yields %for $\, \beta+\delta \in {\mathbb N}$ that
\begin{eqnarray*}
\omega_\beta(f,t)_{L_{p^*,r_1}} & \lesssim & K_0(f, t^\beta;
B^\delta_{(1,1),q_1}, W_1^{\beta +\delta})\\
 & \approx & \left( \int_{0}^{t^{\beta+\delta}}
[u^{-\delta/(\beta+\delta)} K_0(f,u;L_1, W_1^{\beta
+\delta})]^{q_1}\frac{du}{u} \right)^{1/q_1}\\
& \approx & \left( \int_{0}^{t} [u^{-\delta}
\omega_{\beta+\delta}(f,u)_{L_1}]^{q_1} \frac{du}{u} \right)^{1/q_1},
\end{eqnarray*}
where the condition
$\, \beta+\delta \in {\mathbb N}$ allows us to identify the resulting $\,
K_0$-functional with the classical modulus of smoothness in $\, L_1.$
$\hfill \Box$

{
{\it Proof of  Proposition 1.3\! $'$(B)}}.
Following the proof of  (\ref{kolya}), we need analogues of (\ref{embBH}) and (\ref{embHB}) for $ p=1$. In fact, in this case
(\ref{embBH}) holds  whenever $\, 1 \le q_1 \le r_1 \le \infty$ (see (\ref{besemb})).
Concerning (\ref{embHB}), we modify it by repeating the argument in (\ref{potemb}) to get
\[
W_1^{\beta+\delta} \hookrightarrow W_{n/(n-1),1}^{\beta+\delta-1}
= H_{n/(n-1),1}^{\beta+\delta-1}.
\]
Hence, applying  (\ref{embHB}) upon $\, H_{n/(n-1),1}^{\beta+\delta-1}$, under our assumptions, we arrive at
% hence for $\, 2\le n \in {\mathbb N},\, \beta>0,$
\[
W_1^{\beta+\delta} \hookrightarrow B^\beta_{(p^*,r_1),q_0}\, , \quad
\frac{1}{p^*}=1-\frac{\delta}{n}>0 , \; \delta\ge 1,\; \beta>0,\;
\delta + \beta \in {\mathbb N},\; 1\le q_0 \le \infty.
\]
By the Holmstedt formula,
\begin{eqnarray*}
I_{p^*} & := & t^{\beta/(\beta +\delta)} \left( \int_{t}^{\infty}
[u^{-\beta/(\beta + \delta)} K_0(f,u;L_{p^*,r_1},
H^{\beta+\delta}_{p^*,r_1})]^{q_0}\frac{du}{u} \right)^{1/q_0}\\
& \approx & K_0(f,t^{\beta/(\beta+\delta)}; L_{p^*,r_1},(L_{p^*,r_1},
H^{\beta+\delta}_{p^*,r_1})_{\beta/(\beta +\delta),\, q_0}\, ) \\
& \lesssim & \|f-g\| _{p^*,r_1}+ t^{\beta/(\beta+\delta)}
|g|_{B_{(p^*,r_1) ,\, q_0}^{\beta}}\\
& \lesssim & |f-g|_{B^\delta_{(1,1),q_1}} + t^{\beta/(\beta+\delta)}
  |g|_{W^{\beta +\delta}_1} \, ,\qquad \quad 1 \le q_1 \le r_1 \le \infty \, .
\end{eqnarray*}
Since this estimate holds for all $\, g \in W_1^{\beta +\delta}$, we have
\begin{eqnarray*}
I_{p^*}  & \lesssim  & {K}_0(f,t^{\beta/(\beta+\delta)};
(L_1,W_1^{\beta+\delta})_{\delta/(\beta+\delta),\, q_1} ,  W_1^{\beta
+\delta})\\
& \approx & \left( \int_{0}^{t} [u^{-\delta/(\beta+\delta)}
K_0(f,u;L_1,W_1^{\beta+\delta})] ^{q_1}\frac{du}{u  } \right)^{1/q_1}.
\end{eqnarray*}
Now simple substitutions, the characterizations of the $\,
K_0$-functionals via moduli of smoothness of integer order  give the assertion.
\hfill $\Box$

\begin{remark}
 Proposition 1.3 $'$  contains the corresponding results for Lebesgue spaces (for part (A), take $p^*=q_1= r_1$ and see
\cite{kolo, kolo1, tre4},
for part (B), take $p^*=q_1= r_1$, $q_0=1$ and see
\cite{kolo, kolo1, kolyada, tre4}).
We also note that even though (\ref{kol12}) does not hold in general for $p=1$ and $p^*<\infty$, it is still valid for $p=1$ and $p^*=\infty$ (\cite[Corollary~8.3]{kolo}), i.e., there holds
$$
\omega_k(f,t)_{L_{\infty}}
\lesssim \int_{0}^{t}u^{-n}\omega_{k+n}(f,u)_{L_{1}}
\frac{du}{u},
\qquad % \frac1{p^*}=\frac1p-\frac{\sigma}n.
 k\in\mathbb{N}.$$
 %  In this paper, we discuss various
\end{remark}

%\begin{remark}
%	Let $n\in \N,$ $n\ge 2$, $1\le \delta<n$, $\frac{1}{p^*} = 1 - \frac{\delta}{n}$, $\beta>0$, and $\beta+\delta \in \N$. Take   $p=p^*$, $1\le q_1=r_1=r_0\le \infty$, and $\gamma =\delta$. Then, on combining the results of   Proposition~1.1 (B) and  Proposition~1.3 $'$ (A), we arrive at
%\[
%t^\beta \left( \int_{t}^{\infty}
%[u^{-\delta}\omega_{\beta+\delta}(f,u)_{L_{p^*,r_1}} ]^{r_1}\frac{du}{u}
%  \right)^{1/r_1} \lesssim
%\left( \int_{0}^{t}[u^{-\delta}
%\omega_{\beta+\delta}(f,u)_{L_1} ]^{r_1}\frac{du}{u} \right)^{1/r_1}, \quad
%t\to 0+,
%\]
%for all $\, f \in L_1({\mathbb R}^n)$,  provided that
%\[
%r_1
% \ge   \left\{ \begin{array}{ll}
%\max \{p^*,2\}  &\text{if} \quad  p^* \not= 2\\
%2  &\text{if}\quad  p^* =2.
%\end{array} \right.
%\]
%\end{remark}

\section{Notation and preliminaries }\label{section2}

%For two
%non-negative expressions %({i.e.} functions or functionals)
%${\mathcal A}$, ${\mathcal B}$, the symbol

Throughout the paper, we write ${\mathcal A}\lesssim %\precsim
{\mathcal B}$ (or
${\mathcal A}\gtrsim %\succsim
{\mathcal B}$) if $ {\mathcal A}\leq c\, {\mathcal
B}$ (or $c\,{\mathcal A}\geq {\mathcal B}$) for some positive constant $c$, %independent of
 %appropriate quantities involved
 which depends only on nonessential  variables involved in the expressions
 ${\mathcal A}$ and ${\mathcal B}$, and
 ${\mathcal A}\approx {\mathcal B}$ if ${\mathcal A}\lesssim {\mathcal B}$ and ${\mathcal A}\gtrsim{\mathcal B}.$
%The symbol $\chi_E$ stands for the characteristic function of the set $E$.

 %If ${\mathcal A}\precsim {\mathcal
%B}$ and ${\mathcal A}\succsim{\mathcal B}$, we write ${\mathcal A}\approx
%{\mathcal B}$ and say that ${\mathcal A}$ and ${\mathcal B}$ are equivalent.

In the whole paper %what follows
the symbol $(\mathfrak{R},\mu)$ denotes a totally $\sigma$-finite measurable space with a non-atomic measure $\mu$,
and $\M(\mathfrak{R},\mu)$ is the set of all extended complex-valued
$\mu$-measurable functions on $\mathfrak{R}$.
By $\M^+(\mathfrak{R},\mu)$
we mean the family of
all non-negative functions from $\M(\mathfrak{R},\mu)$.
 %The family of all extended scalar-valued (real or complex)
%$\mu_n$-measurable functions on $\Omega$ is denoted by
%${\M}(\Omega)$ while ${\M}^+(\Omega)$ stands for the subset
%of ${\ M}(\Omega)$
%consisting of all functions which are non-negative a.e. on $\Omega$.
  When
$\mathfrak{R}$ is an interval $(a,b)\subseteq\mathbb{R}$ and $\mu$ is the Lebesgue measure on $(a,b)$, we denote
these sets by $\mathcal{M}(a,b)$ and
$\mathcal{M}^{+}(a,b),$ respectively.
Moreover,
by $\M^+(a,b;\downarrow)$ (and $\M^+(a,b;\uparrow)$)
we mean the subset of $\mathcal{M}^{+}(a,b)$ consisting of
all non-increasing (non-decreasing) functions on  $(a,b)$.
We denote by $\lambda_n$ the $n$-dimensional Lebesgue measure on $\mathbb{R}^n$.

%%%%%%%%%%%%%%%%Let   $\Omega=\R^n$ or $\T^n$. applications!!!!!!!!!!!!
 For two normed spaces
$\, X$ and $\, Y,$ we will use the notation $\, Y \hookrightarrow X$ if
$\, Y \subset X$ and $\, \| f\|_X \lesssim \| f\|_Y$ for all $\, f \in
Y.$

A normed linear space $X$ of functions from $\M(\mathfrak{R},\mu)$,  equipped with the
norm $\|\cdot\|_{X}$,  %defined for all $f \in \M^+(\mathfrak{R},\mu)$,
is said to be a~{\it Banach function space} %(shortly \textit{r.i.~space})
if the following four axioms hold:
\begin{itemize}

%\item[(0)]\qquad %$f \in X$ if $\|f\|_X <\infty$ and
% $\|f\|_X =0$ if and only if $f=0 \ \mu$-a.e.;
%$\|f\|_{X} = \||f|\|_{X}$ whenever
%$f\in \M(\mathfrak{R},\mu)$;

\item[(1)]\qquad $0\le g \le f$ $\mu$-a.e.\  implies
$\|g\|_{X} \le \|f\|_{X}$;
\item[(2)]\qquad $0\le f_n
\nearrow f$ $\mu$-a.e.\ implies $\|f_n\|_{X} \nearrow
\|f\|_{X}$;
\item[(3)]\qquad $\|\chi_E\|_{X}<\infty$ for every $E\subset\mathfrak{R}$ of finite measure;\footnote%\footnotetext
{\ \ The symbol $\chi_E$ stands for the characteristic function of the set $E$.}

\item[(4)]\qquad if $\mu(E)<\infty,$
then there is a constant $C_E$  such that \newline
$ \qquad  \int_{%\mathfrak{R}
E}
|f(x)|\,d\mu(x) \le C_E \|f\|_{X} $
  for every $f\in X$.
  \end{itemize}
  Given
  a~{ Banach function space} $X$, which satisfies
 \begin{itemize}
\item[(5)]\qquad $\|f\|_{X} = \|g\|_{X}$ whenever $f\sp* = g\sp *$,\footnote{\ \  Recall that $f^*$ and $g^*$ denote
the non-increasing rearrangements of functions $f$ and $g$.} %, cf. \cite[p. 39]{besh}.}
\end{itemize}
we obtain a {\it rearrangement-invariant Banach function space}
(shortly \textit{r.i.~space}).
Note that, by \cite[Chapter~2, Theorem 6.6]{besh} and \cite[Chapter~2, Theorem 2.7]{besh},
$L_1\cap L_\infty
\hookrightarrow X \hookrightarrow L_1+ L_\infty$ for any r.i. space $X$.

Given a  Banach function space $X$ on $(\mathfrak{R},\mu)$, the set
$$
X'=\left\{f\in\M(\mathfrak{R},\mu):\,\int_{\mathfrak{R}}|f(x)g(x)|\,d\mu<\infty\
\textup{for every } g\in X\right\},
$$
equipped with the norm
$$
\|f\|_{X'}=\sup_{\|g\|_{X}\leq1} \int_{\mathfrak{R}}|f(x)g(x)|\,d\mu,
$$
is called the \textit{associate space} of $X$. It turns out
that $X'$ is again a  Banach function space and that $X''=X$.
Furthermore, the \textit{H\"older inequality}
$$
\int_{\mathfrak{R}}|f(x)g(x)|\,d\mu\leq\|f\|_{X}\|g\|_{X'}
$$
holds for every $f$ and $g$ in $\M(\mathfrak{R},\mu)$. It will be useful to note that
\begin{equation}\label{E:duall}
\|f\|_{X}=\sup_{\|g\|_{X'}\leq1} \int_{\mathfrak{R}}|f(x)g(x)|\,d\mu.
\end{equation}

For every r.i.~space $X$ on $(\mathfrak{R},\mu)$, there exists %a~unique
an r.i.~space $\overline X$ over $((0,\infty), dt)$
%, with respect to the one-dimensional Lebesgue measure,
 such that
$$
\|f\|_{X}=\|f\sp*\|_{\overline X}\quad \mbox{ for every } \ f\in X
$$
%for every $f\in X$
(cf.~\cite[Chapter~2, Theorem~4.10]{besh}). This space, equipped with the norm
$$
\|f\|_{\overline X}=\sup_{\|g\|_{X'}\leq1}
\int_0\sp{\infty}f\sp*(t)g\sp*(t)\,dt,
$$
is called the \textit{representation space} of $X$. %(and it is unique if $\mu(\mathfrak{R})=\infty$).

%Let $X=X(\Omega)$ be an r.i. Banach function space, where  $\Omega=\R^n$ or $\T^n$.

%Let $\, (X, \|\cdot\|_X)$ be a Banach space and $Y$ be a subspace of $X$ normed by
%$\|f\|_Y=\|f\|_X+|f|_Y$, where $|f|_Y$ is a seminorm on $Y$. Then
% $Y\hookrightarrow X$, where  the symbol $\hookrightarrow$ stands for a continuous embedding.

A Banach space $F$ of real valued measurable functions defined on the measurable space  $(\mathfrak{R},\mu)$  is called \textit{ a Banach function lattice}  if its norm has the following property:
\[ |f(x)|\le |g(x)| \quad  \text{$\mu$-a.e.}, \quad   g\in F \ \ \ \Rightarrow \ \ \ f\in F \quad \text{and}\quad  \|f\|_F\le \|g\|_F.
\]
In this paper we will consider a Banach lattice  $F$  over a measurable space  $((0,\infty),dt/t)$, satisfying the condition
%\[\min(1,t)\in F.\]
%Assume that
\begin{eqnarray} \label{cond5}
\Phi(1)<\infty,
\end{eqnarray}
where
$\Phi(x):=\|\min(x,\cdot)\|_{F}$ for all $x\in (0,\infty)$. (The function $\Phi$  is sometimes called the fundamental function of the lattice $F$.)
%Remark also that
Note that %$\Phi\in Q(I)$, i.e.,
 $\Phi$ is a quasiconcave function on $(0,\infty)$,
which means that $\Phi\in\mathcal{M}^{+}((0,\infty);\uparrow)$ and $\frac{\Phi}{Id}\in \mathcal{M}^{+}((0,\infty);\downarrow)$  % respectively.
 (here $Id$ stands for the identity map on $(0,\infty)$). Condition (\ref{cond5}) implies that
$\Phi(x)<\infty$ for any $x\in (0,\infty)$, moreover, $\Phi\in C(0,\infty)$ (cf.  \cite [Remark~2.1.2]{evgo}).
%\footnote{S, in this paragraph: 1. I replaced $\varphi\to \Phi$.
%2. $I\to(0,\infty)$. 3. I deleted the notation $Q(I)$}

Let $\, (X, Y)$ be a compatible couple of Banach spaces (cf., \cite[p. 310]{besh}).
 The $K$-functional is defined for each $f\in X+Y$ and $t>0$ by
\begin{eqnarray} \label{k-f0--}
K(f,t;X, Y):= \inf _{f=f_1+f_2} \Big(\| f_1\|_{X} + t
\|f_2\|_{Y} \Big),
\end{eqnarray}
where the infimum extends over all representation $f=f_1+f_2$ %of $f$
  with
$f_1 \in X$ and $f_2 \in Y$. As a~function of $t$, $K(f,t;X,Y)$
 is \textit{quasiconcave} on $(0,\infty)$. %, that is, it is non-decreasing in $t$, and the function $t\sp{-1}K(t,f;X,Y)$  is non-increasing in $t$.

Similarly, we define, for each $f\in X+ Y$ and $t>0,$
\begin{eqnarray} \label{k-f0}
K_0(f,t;X, Y):= \inf _{f=f_1+f_2} \Big(\| f_1\|_{X} + t
|f_2|_{Y} \Big)
\end{eqnarray}
and
\begin{eqnarray} \label{k-f1}
K_1(f,t;X, Y):= \inf _{f=f_1+f_2} \Big( | f_1|_{X} + t
|f_2|_{Y} \Big),
\end{eqnarray}
where $| \cdot |_{X}$ and $|\cdot |_{Y}$ are seminorms on $X$ and $Y$.

If $(X,Y)$ is a compatible couple of Banach spaces and
$F$ is a Banach lattice, then we
define the space $(X,Y)_F$ to be the set   of all $f\in X+Y$ for which the norm
\[ \|f\|_{(X,Y)_F}=  \|K(f, \cdot; X,Y)\|_{F}
\]
is finite. Note that if $1\le r < \infty$, $\theta \in (0,1)$ and the Banach lattice $F$ is the set of
all functions $h \in\mathcal{M}(0,\infty)$ such that
$$
\|h\|_F:=\Big(\int_0^\infty\Big(s^{-\theta} |h(s)|\Big)^r \frac{ds}{s}\Big)^{1/r} <\infty,
$$
%(with the usual modification if $r=\infty$),
then the space $(X,Y)_F$ coincides with the classical space $(X,Y)_{\theta, r}$ defined, e.g.,
in \cite [p.~299] {besh}.

We will also work with more general classes of functions, which are not linear.
Let $\rho$ be a functional on  $\M^+({\R^n},\lambda_n)$ satisfying

(N1) $\rho(f)\ge 0$ for any $f\in \M^+({\R^n},\lambda_n)$ and  $\rho(f)=0 \Leftrightarrow f=0$, $\lambda_n$-a.e.,

(N2) $\rho(\alpha f)=\alpha\rho(f)$ for any $f\in \M^+({\R^n},\lambda_n)$ and $\alpha\ge 0$.
\\
Such a functional is called a gage and
 the collection
$$
X=X(\R^n)=X({\R^n},\lambda_n)=\{f\in \M^+({\R^n},\lambda_n): \rho(f)<\infty\}
$$ is said  {\it a $($function$)$ gaged cone} (cf. \cite{cw-peetre}). Moreover, we put
$$\|f\|_X:=\rho(f),\quad f\in{X}.$$
An {associate space} of a gaged cone $X$ is defined in the same way as for Banach function spaces.

If $X$ is a gaged cone, then the functional $|\cdot|_X:X\to\R$
is called a  semi-gage on $X$ provided that
the
functional $|\cdot|_X$ is non-negative and positively homogeneous  on $X$.

Given two function gaged cones $X$ and $Y$, the embedding $Y\hookrightarrow X$ means that $Y\subset X$ and $\|f\|_X\lesssim \|f\|_Y$ for all $f\in Y.$

A pair of function gaged cones $(X,Y)$ is said a compatible couple of function  gaged cones if there is some Hausdorff topological vector space, say $Z$,
in which each of $X$ and $Y$ is continuously embedded. Given  a compatible couple  $(X,Y)$ of function  gaged cones, the $K$-functionals
$K(f,t;X, Y), K_0(f,t;X, Y),$
and
$K_1(f,t;X, Y)$
are defined
analogously to (\ref{k-f0--})--(\ref{k-f1}). Moreover, if $F$ is a Banach lattice over a measure space $((0,\infty),dt/t)$
satisfying  (\ref{cond5}), then the space $(X,Y)_F$ is defined analogously to the case when $(X,Y)$ is a compatible couple of Banach spaces.

In this paper we work with  function gaged cones being the subsets of
 $
L_1(\R^n)+L_\infty(\R^n)$.

%likewise use function classes
%$Y$ (and $Z$) such that
% \begin{eqnarray} \label{classes}
% Y=\{f\in %\subset
%L_1(\R^n)+L_\infty(\R^n): \rho_{_{Y}}(f)<\infty\},
% \end{eqnarray}
%where the non-negative functional $\rho_{_{Y}}$ is positively  homogeneous  (not necessarily sub-additive)
%and such that $\rho_{_{Y}}(f)=0$ if and only if $f=0$ a.e.
 %satisfies two first conditions of a norm with exception of triangle inequality.
%For this couple $\, (Y, Z)$
 %for $Y,Z\subset L_1(\R^n)+L_\infty(\R^n)$
% we will define the $K$-functional similarly to (\ref{k-f0}) and, moreover,
%For two normed spaces $\, X$ and $\, Y,$ we will use
% the notation $\, Y \hookrightarrow Z$ means that
%$\, Y \subset Z$ and $\, \rho_{_{Z}}(f) \lesssim \rho_{_{Y}}(f)$ for all $\, f \in
%Y.$

%Let $F_0$ be a Banach  lattice over measurable space $((0,\infty),\frac{dt}t).$

%For a given non-negative non-decreasing function $\xi$ on $\R_+$, %and a lattice  $F$,
% a generalized reverse function of $\xi$ is given by
%\begin{eqnarray*} %\label{emb3*}
% \inf \Big\{
%\tau>0: \,
%\xi (\tau)>t
%\Big\}.
%\end{eqnarray*}

%We will use the notation $\, A \lesssim B,$ %iwith $\, A,B\ge 0,$ for the
%if  %estimate
%$\, A \le c\, B$ for some positive constant $\, c$, which depends only on
%nonessential  variables in the expressions $\, A$ and $\, B.$ If $\, A \lesssim B$ and $\, B
%\lesssim A,$ we write $\, A \approx B.$

%\footnote{S: I moved the next two paragraphs here}
Given $k\in\mathbb N$ and a Banach function space $X=X(\R^n)$, we denote by $W^kX$ the corresponding Sobolev space,
 that is,
the space of all functions on $\R^n$ whose distributional derivatives $D^\alpha f$, $|\alpha|\le k$, belong to $X$.
This space is equipped with the norm $$
\|f\|_{W^kX}
:=
%\|f\|_{X}+\|f\|_{\dot{W}^kX}=
\|f\|_{X}+|f|_{W^kX}:=
\|f\|_{X}+\sum\limits_{k=|\alpha|}\|D^{\,\alpha} f
\|_X.$$
Note that $W^kX=A^kX$, where $A$ is the Sobolev integral operator; see, for example, the representation theorem in \cite[Section~3.4]{bur}.
If $X$ is a function  gaged cone, then the Sobolev class  $W^kX$ is defined similarly.
 % for
 % $X  =X(\R^n)$ being, in general,  a nonlinear class
 %of functions on $\R^n$ satisfying $X \subset L_1(\R^n)+L_\infty(\R^n)$.
%The corresponding homogeneous Sobolev norm is defined by
%$$\|f\|_{\dot{W}^kX}:=\sum\limits_{k=|\alpha|}\|D^{\,\alpha} f\|_X.$$

We are going to use the classical equivalence between the $K$-functional $K_0$ and modulus of smoothness: for any $k\in\mathbb N$ and
an r.i. Banach function  space $X$, one has
\begin{equation}\label{mod-equi}
\omega_k(f,t)_X\approx K_0(f,t^k; X,W^{k}X)\qquad \mbox{for all}\ \  t> 0\ \mbox{and} \ \  f\in X
\end{equation}
%where
provided that in the space $W^{k}X$ we choose the seminorm %given by
$
|f|_{W^{k}X}:=\sum\limits_{k=|\alpha|}\|D^{\,\alpha} f
\|_X$.
%and define the $K$-functional $K_0$ by (\ref{k-f0}).
% and the seminorm in the space $W^{k}X$ is given by
%$$
%|f|_{W^{k}X}:=\sum\limits_{k=|\alpha|}\|D^\alpha f
%\|_X.$$
 The proof follows the same reasoning as the one given for $X=L_p$ in \cite[pp. 339--341]{besh}.

Let $-\infty \le a<b \le +\infty$ %$a,b \in\R, a<b,$
and let $\xi:(a,b)\to \mathbb R$ be  a non-decreasing function on $(a,b).$
Put $%\mathrm{L}_a:=
\xi(a)=\lim_{t\to a+}\xi(t)$ and $%\mathrm{L}_b:=
\xi(b)=\lim_{t\to b-}\xi(t)$.
{\it The generalized reverse function} $\mathrm{R}\xi$ of $\xi$ is defined by
\begin{eqnarray*} %\label{emb3*}
(\mathrm{R}\xi)(t):= \inf \Big\{
\tau\in (a,b): \,
\xi (\tau)>t
\Big\}\ \  \mbox{ for all } \ t\in (\xi(a),\xi(b)).
\end{eqnarray*}
The following properties of the generalized reverse function can be easily verified.
\begin{lemma}\label{lemma-reverse}
If the function $\xi$ given above is left continuous on $(a,b)$, then
$$\xi\big((\mathrm{R}\xi)(t)\big)\le t \ \ \mbox{ for any } \ t\in (\xi(a),\xi(b))$$
and
$$t\le(\mathrm{R}\xi)(\xi(t)) \ \ \mbox{ for any } \ t\in (a,b).$$
 Moreover, if $\xi\in C((a,b)),$  then
$$\xi\big((\mathrm{R}\xi)(t)\big)= t  \ \ \mbox{ for any } \ t\in (\xi(a),\xi(b)).$$
\end{lemma}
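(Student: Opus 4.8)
The plan is to prove Lemma \ref{lemma-reverse} directly from the definition of the generalized reverse function $\mathrm{R}\xi$, exploiting monotonicity of $\xi$ together with the one-sided continuity hypotheses. Throughout I would write $A_t:=\{\tau\in(a,b):\xi(\tau)>t\}$, so that $(\mathrm{R}\xi)(t)=\inf A_t$; note that since $\xi$ is non-decreasing, $A_t$ is an up-set (an interval of the form $(s,b)$ or $[s,b)$), so $\inf A_t$ is the natural left endpoint and everything reduces to careful handling of the endpoint.

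\textbf{First inequality.} Fix $t\in(\xi(a),\xi(b))$ and put $s:=(\mathrm{R}\xi)(t)=\inf A_t$. Since $t<\xi(b)$, the set $A_t$ is nonempty, and since $t>\xi(a)$ it is not all of $(a,b)$, so $s\in(a,b)$. For every $\tau<s$ we have $\tau\notin A_t$, hence $\xi(\tau)\le t$. Letting $\tau\to s-$ and using that $\xi$ is left continuous at $s$ gives $\xi(s)=\lim_{\tau\to s-}\xi(\tau)\le t$, which is exactly $\xi\big((\mathrm{R}\xi)(t)\big)\le t$.

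\textbf{Second inequality.} Fix $t\in(a,b)$ and set $u:=\xi(t)$. I must show $t\le(\mathrm{R}\xi)(u)=\inf A_u$, i.e. that $t$ is a lower bound for $A_u$. Suppose $\tau\in A_u$, so $\xi(\tau)>u=\xi(t)$. Since $\xi$ is non-decreasing, $\xi(\tau)>\xi(t)$ forces $\tau>t$ (if $\tau\le t$ then $\xi(\tau)\le\xi(t)$, a contradiction). Hence $t\le\tau$ for all $\tau\in A_u$, so $t\le\inf A_u=(\mathrm{R}\xi)(\xi(t))$. (One should check $u\in(\xi(a),\xi(b))$ so that $(\mathrm{R}\xi)(u)$ is defined; this is where one may need $\xi$ not to be constant near the endpoints, or one interprets the inequality trivially — I expect the paper only applies this when $\xi$ is continuous and strictly increasing, so the statement is used in a setting where $A_u$ is genuinely a half-line.)

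\textbf{The equality under continuity.} Now assume $\xi\in C((a,b))$ and fix $t\in(\xi(a),\xi(b))$; by the first part it remains to show $\xi\big((\mathrm{R}\xi)(t)\big)\ge t$. With $s=(\mathrm{R}\xi)(t)=\inf A_t$ as above, pick a sequence $\tau_k\in A_t$ with $\tau_k\to s+$; then $\xi(\tau_k)>t$ for all $k$, and continuity of $\xi$ at $s$ gives $\xi(s)=\lim_k\xi(\tau_k)\ge t$. Combined with $\xi(s)\le t$ from the first part, we get $\xi\big((\mathrm{R}\xi)(t)\big)=t$. The only mild obstacle is bookkeeping at the interval endpoints — ensuring the relevant infimum is attained in the open interval $(a,b)$ and that the one-sided limits defining $\xi(a),\xi(b)$ are used consistently — but no real difficulty arises; the whole lemma is a routine consequence of monotonicity plus the continuity hypothesis, and I would present it in the three short paragraphs above.
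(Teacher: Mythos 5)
Your proof is correct and is precisely the routine verification that the paper omits: the text merely remarks that the lemma ``can be easily verified'' and gives no argument, so there is no paper proof to compare against, but your handling via the up-set $A_t$, left-continuity for the first inequality, and the two-sided squeeze for the continuous case is exactly the intended reasoning. The one subtle point you flag is real and worth retaining: $\xi(t)$ need not lie in $(\xi(a),\xi(b))$ when $\xi$ is constant near an endpoint, so for such $t$ the expression $(\mathrm{R}\xi)(\xi(t))$ in the second inequality is, as literally defined, out of the domain of $\mathrm{R}\xi$; this is a latent imprecision in the lemma's statement rather than a gap in your argument, and either of your resolutions (read those cases vacuously, or extend $\mathrm{R}\xi$ via the convention $\inf\emptyset:=b$, under which your lower-bound argument still yields $t\le\inf A_{\xi(t)}$) is appropriate.
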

We note that %, in general,
Lemma \ref{lemma-reverse} %the result
does not hold without the assumption that $\xi$ is left continuous.
Moreover,   an analogue of $\xi\big((\mathrm{R}\xi)(t)\big)= t$, namely
$(\mathrm{R}\xi)(\xi(t))=t$ for any $t\in (a,b)$, need not hold even if $\xi\in C((a,b)).$

If $(a,b)\subset \mathbb R$ and $p \in (0,\infty]$, then the symbol $\|\cdot\|_{p,(a,b)}$ stands for the quasinorm
in the Lebesgue space $L_p((a,b)).$

As usual, for $ p\in [1,\infty]$, we define $p'$ by $1/p+1/p'=1$.
Throughout the paper we use the abbreviations $\text{LHS}(*)$
 %LHS of $(*)$
($\text{RHS}(*)$) %RHS of $(*)$
for the
left- (right-) hand side of the relation
$(*)$.
%({\bf Question! ?})

\section {General
inequalities for  K-functionals }\label{holm-section}

\subsection{Holmstedt-type  formulas}

The next theorem is a folklore  in some way and it can be considered  as an abstract form of the limiting cases of the Holmstedt-type formulas
 %which are  well known to experts
 (see, e.g., \cite[Corollary~2.3, p. 310 and p. 430]{besh} and \cite[p. 466]{brud}).
  Since  we have not been able to find an explicit reference of the needed general form (cf. \cite{karadzhov, mas}), we prove it below.
 The importance of this result can be seen in, e.g., \cite{ran}.

\begin{theorem} \label{holmsted-f}Let $(X_0,X_1)$ be a compatible couple of Banach function spaces.
% and
% let $f\in X_1$ satisfy
%\begin{equation}\label{1}
%\sup_{0<t<\infty}\frac{K(f,t;X_0, X_1)}{t}\approx\|f\|_{X_1}.
%\end{equation}

{\textnormal{(A)}}
Let $F_0$ be a  Banach lattice over $((0,\infty),dt/t)$.
Assume  that the function
$\Xi(t):=\|\min(\cdot,t)\|_{F_0}$, $t \in (0,\infty)$,
satisfies $\Xi(1)<\infty.$
If  $\phi$ is the generalized reverse function of
$
\Xi,
$
then
\begin{align} \label{A1}
%\begin{split}
K(f,t; (X_0,X_1)_{F_0},X_1)&\approx
\|
K(f,s; X_0,X_1) \chi_{(0,\phi(t))}(s)
\|_{F_0} \\
%\nonumber
&\qquad+K(f,\phi(t); X_0,X_1) \|
\chi_{(\phi(t),\infty)}(s)\nonumber
\|_{F_0}
%\end{split}
\end{align}
for all $t \in (\Xi(0), \Xi(\infty))$ and $f \in (X_0,X_1)_{F_0}+X_1.$

{\textnormal{(B)}}
Let $F_1$ be a  Banach lattice over $((0,\infty),dt/t)$.
Assume  that the function
$\Theta(t):=
t/
 \|\min(\cdot,t)\|_{F_1}$, $t \in (0,\infty)$,
satisfies $\Theta(1)<\infty.$
If  $\psi$ is the generalized reverse function of
$
\Theta,
$
then
\begin{align} \label{A2}
%\begin{split}
K(f,t; X_0, (X_0,X_1)_{F_1})&\approx
t\frac{K(f,\psi(t); X_0,X_1)}{\psi(t)}\|
 s\,\chi_{(0,\psi(t))}(s)
\|_{F_1}\\&\qquad+t
\|
K(f,s; X_0,X_1) \chi_{(\psi(t),\infty)}(s)\nonumber
\|_{F_1}
%\end{split}
\end{align}
%where $\psi$ is the generalized reverse function of
% $\Theta(t)= t/
% \|\min(\cdot,t)\|_{F_1}.
%$
for all $t \in (\Theta(0), \Theta(\infty))$ and %$f\in X_0+X_1$.
$f \in X_0 + (X_0,X_1)_{F_1}.$
\end{theorem}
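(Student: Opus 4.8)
The plan is to prove part (A) in detail and then obtain part (B) by a duality/symmetry argument, since the two statements are formally interchanged by the standard involution $K(f,t;X_0,X_1)=tK(f,1/t;X_1,X_0)$ of the $K$-functional.

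\textbf{Proof of (A).} First I would set up the two elementary facts about the space $(X_0,X_1)_{F_0}$ that I need. Writing $g=K(f,\cdot\,;X_0,X_1)$ for the (quasiconcave) $K$-functional of a fixed $f$, the defining norm is $\|f\|_{(X_0,X_1)_{F_0}}=\|g\|_{F_0}$, so for any decomposition $f=f_1+f_2$ with $f_2\in X_1$ one has the pointwise bound $K(f,s;X_0,X_1)\le K(f_1,s;X_0,X_1)+\min(1,s)\|f_2\|_{X_1}$, which after applying $\|\cdot\|_{F_0}$ gives the easy ``$\lesssim$'' direction of \eqref{A1} once one notes $\|\min(1,s)\|_{F_0}=\Xi(1)<\infty$ and splits the lattice norm of $g$ over $(0,\phi(t))$ and $(\phi(t),\infty)$. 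More precisely, for the upper estimate in \eqref{A1} I would, given $\varepsilon>0$, choose a near-optimal decomposition $f=u+v$ realizing (up to $1+\varepsilon$) the infimum defining the right-hand side, where $u$ is taken so that $K(f,s;X_0,X_1)\approx K(u,s;X_0,X_1)$ for $s\le\phi(t)$ and $v$ absorbs the tail; the key quantitative input is the quasiconcavity of $s\mapsto K(f,s;X_0,X_1)$ together with the identity $\Xi(\phi(t))\approx t$ coming from Lemma~\ref{lemma-reverse} (applied to the non-decreasing, continuous function $\Xi$, whose continuity follows from $\Xi(1)<\infty$ exactly as recorded after \eqref{cond5}).

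\textbf{The reverse inequality.} This is where the work lies. Fix $t$ and let $f=f_1+f_2$ be an arbitrary admissible decomposition with $f_1\in(X_0,X_1)_{F_0}$, $f_2\in X_1$; I must show $\|f_1\|_{(X_0,X_1)_{F_0}}+t\|f_2\|_{X_1}\gtrsim \text{RHS}\eqref{A1}$. On the one hand $\|f_1\|_{(X_0,X_1)_{F_0}}=\|K(f_1,\cdot)\|_{F_0}\ge\|K(f_1,\cdot)\chi_{(0,\phi(t))}\|_{F_0}$, and from $K(f,s)\le K(f_1,s)+\min(1,s)\|f_2\|_{X_1}$ together with $\|\min(1,s)\chi_{(0,\phi(t))}\|_{F_0}\le\Xi(\phi(t))\approx t$ one recovers $\|K(f,s)\chi_{(0,\phi(t))}\|_{F_0}\lesssim\|f_1\|_{(X_0,X_1)_{F_0}}+t\|f_2\|_{X_1}$, giving the first term on the right of \eqref{A1}. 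On the other hand, to produce the second term $K(f,\phi(t);X_0,X_1)\|\chi_{(\phi(t),\infty)}\|_{F_0}$ I would bound $K(f,\phi(t);X_0,X_1)\le K(f_1,\phi(t);X_0,X_1)+\min(1,\phi(t))\|f_2\|_{X_1}$ and control the first summand by the lattice norm of $K(f_1,\cdot)$ restricted to $(\phi(t),\infty)$: since $K(f_1,\cdot)$ is non-decreasing, $K(f_1,\phi(t))\le K(f_1,s)$ for $s>\phi(t)$, hence $K(f_1,\phi(t))\,\|\chi_{(\phi(t),\infty)}\|_{F_0}\le\|K(f_1,\cdot)\chi_{(\phi(t),\infty)}\|_{F_0}\le\|f_1\|_{(X_0,X_1)_{F_0}}$, while the $f_2$-contribution is handled by $\min(1,\phi(t))\|\chi_{(\phi(t),\infty)}\|_{F_0}\le\|\min(1,s)\|_{F_0}=\Xi(1)<\infty$ and is thus absorbed into a constant times $\|f_2\|_{X_1}\le t\|f_2\|_{X_1}/t$... here one must be a little careful and instead use that $\phi(t)\,\|\chi_{(\phi(t),\infty)}\|_{F_0}\le\|\min(\phi(t),s)\|_{F_0}=\Xi(\phi(t))\approx t$, so the $f_2$-term contributes $\lesssim t\|f_2\|_{X_1}$ as required. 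Taking the infimum over all decompositions $f=f_1+f_2$ yields ``$\gtrsim$''.

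\textbf{Deducing (B) and closing up.} For part (B) I would apply part (A) with the roles of $X_0,X_1$ reversed together with the rescaling $K(f,t;X_0,(X_0,X_1)_{F_1})=t\,K(f,1/t;(X_0,X_1)_{F_1},X_0)$ and $(X_0,X_1)_{F_1}=(X_1,X_0)_{\widetilde F_1}$ for the lattice $\widetilde F_1$ obtained by the substitution $s\mapsto 1/s$; under this substitution the function $\Xi$ of part (A) built from $\widetilde F_1$ is exactly $\Theta$, its generalized reverse is $\psi$, and the two terms of \eqref{A1} transform into the two terms of \eqref{A2} after the change of variable is pushed through the lattice norms (the factor $s$ inside $\|s\,\chi_{(0,\psi(t))}(s)\|_{F_1}$ arises precisely because $\|\chi\|_{\widetilde F_1}$ becomes a weighted norm in $F_1$). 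The main obstacle, I expect, is bookkeeping rather than conceptual: keeping the constants uniform in $t$, correctly invoking Lemma~\ref{lemma-reverse} (which needs left-continuity, guaranteed here by $\Xi\in C(0,\infty)$, so that $\Xi(\phi(t))=t$ and not merely $\le t$), and verifying that the domain restrictions $t\in(\Xi(0),\Xi(\infty))$ resp.\ $t\in(\Theta(0),\Theta(\infty))$ are exactly what is needed for $\phi(t)$ resp.\ $\psi(t)$ to be well-defined and finite. No deep new idea is required beyond the standard Holmstedt splitting at the ``balance point'' $s=\phi(t)$; the novelty is only in packaging it for an arbitrary Banach lattice $F_0$ in place of the classical power weight.
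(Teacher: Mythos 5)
Your proposal tracks the paper's proof closely for part (B) (the same duality substitution) and for the direction $\text{LHS}\eqref{A1}\gtrsim\text{RHS}\eqref{A1}$ of part (A): there too one fixes an arbitrary decomposition $f=f_1+f_2$ and bounds both RHS pieces term by term, exactly as the paper does. One slip: the pointwise bound $K(f,s)\le K(f_1,s)+\min(1,s)\|f_2\|_{X_1}$ is false for general $f_2\in X_1$; the correct bound is $K(f_2,s)\le s\|f_2\|_{X_1}$. Your conclusion survives because you only apply it on $(0,\phi(t))$ and then use $\|s\chi_{(0,\phi(t))}\|_{F_0}\le\Xi(\phi(t))=t$, but the stated inequality should be repaired. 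You also have the relative difficulty of the two directions reversed: the $\gtrsim$ direction you call ``where the work lies'' is the easy one (triangle inequality against an arbitrary decomposition); the $\lesssim$ direction is the one requiring a construction.

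The genuine gap is precisely that $\lesssim$ direction, which you label ``easy'' and leave as a sketch. You name the right ingredients (near-optimal decomposition at the balance point $\phi(t)$, quasiconcavity of $s\mapsto K(f,s;X_0,X_1)$, and $\Xi(\phi(t))=t$), but the phrase ``the infimum defining the right-hand side'' is incorrect ($\text{RHS}\eqref{A1}$ is not an infimum; the relevant infimum is the one defining $K(f,\phi(t);X_0,X_1)$), and the proposal never shows how the ingredients combine. What the paper actually does: pick $f=f_0+f_1$ with $\|f_0\|_{X_0}+\phi(t)\|f_1\|_{X_1}\le 2K(f,\phi(t);X_0,X_1)$, bound $\text{LHS}\eqref{A1}\le\|f_0\|_{(X_0,X_1)_{F_0}}+t\|f_1\|_{X_1}$, and split this into three pieces $J_1,J_2,J_3$. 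The piece $J_1=\|K(f_0,\cdot)\chi_{(0,\phi(t))}\|_{F_0}$ is handled via $f_0=f-f_1$, $K(f_1,s)\le s\|f_1\|_{X_1}$, and the quasiconcavity inequality $sK(f,\phi(t))/\phi(t)\le K(f,s)$ for $s\le\phi(t)$; the piece $J_2=\|K(f_0,\cdot)\chi_{(\phi(t),\infty)}\|_{F_0}$ is handled via $K(f_0,s)\le\|f_0\|_{X_0}\le 2K(f,\phi(t))$; and the piece $J_3=tK(f,\phi(t))/\phi(t)$ is handled by $t=\Xi(\phi(t))\le\|s\chi_{(0,\phi(t))}\|_{F_0}+\phi(t)\|\chi_{(\phi(t),\infty)}\|_{F_0}$ plus quasiconcavity. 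None of these steps appear in your write-up, so the nontrivial half of the equivalence is not actually established.
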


%{\bf numeration ! }

\begin{remark}\label{remark-} (i) Formulas (\ref{A1}) and (\ref{A2}) remain valid for $K$-functionals given by (\ref{k-f0}) and (\ref{k-f1}).

(ii) By Theorem \ref{holmsted-f} estimate \eqref{A1} holds for all $t \in (\Xi(0), \Xi(\infty))$ and $f \in (X_0,X_1)_{F_0}+X_1$,
or equivalently, for all $t \in (\Xi(0), \Xi(\infty))$ and $f \in X$ for which $\text{RHS}\eqref{A1}$ is finite. Similar remark can be made about equivalence  \eqref{A2}.
\end{remark}
%Let us now show that the right hand side of (\ref{emb2--}) is equivalent to
%$\|K(f,s; X,H^{\sigma+k}X) \chi_{(0,\phi(t))}(s) \|_F.$

\begin{proof}[Proof of Theorem \ref{holmsted-f}]
We start with (A).  As the function
$\Xi$
 %  $\|\min(s,\phi(t))\|_{F_0}$
 is quasiconcave,  it is continuous and hence
$\Xi(\phi(t))=\|\min(s,\phi(t))\|_{F_0}=t$ for any $t \in (\Xi(0), \Xi(\infty))$ by Lemma~\ref{lemma-reverse}.
If $f=f_0+f_1$, where $f_0\in (X_0,X_1)_{F_0}$ and $f_1\in X_1$,
then, for all $t \in (\Xi(0), \Xi(\infty))$,
\begin{align*}
\|
K(f,s; X_0,X_1) \chi_{(0,\phi(t))}(s)
\|_{F_0}
&\le
\|
K(f_0,s; X_0,X_1) \chi_{(0,\phi(t))}(s)
\|_{F_0}
\\
& \qquad+
\|
K(f_1,s; X_0,X_1) \chi_{(0,\phi(t))}(s)
\|_{F_0}
\\
&\le
\|
K(f_0,s; X_0,X_1)
\|_{F_0}
+
\|
s \chi_{(0,\phi(t))}(s)
\|_{F_0}
\|f_1\|_{X_1}
%\sup_{s>0} \frac{K(h,s; X_0,X_1) }{s}
\\
&\le
\|f_0\|_{(X_0,X_1)_{F_0}}
+
\|\min(s,\phi(t))\|_{F_0}\|f_1\|_{X_1}\\
&=
\|f_0\|_{(X_0,X_1)_{F_0}}
+
t\|f_1\|_{X_1}
\end{align*}
and
%By (\ref{1}),
\begin{align*}
K(f,\phi(t); X_0,X_1)\| \chi_{(\phi(t),\infty)}(s)
\|_{F_0}
&\le
K(f_0,\phi(t); X_0,X_1) \|\chi_{(\phi(t),\infty)}(s)
\|_{F_0}\\
&\qquad+
K(f_1,\phi(t); X_0,X_1)\| \chi_{(\phi(t),\infty)}(s)
\|_{F_0}
\\
&\le
\|
K(f_0,s; X_0,X_1)
\|_{F_0}
+ \phi(t) \|f_1\|_{X_1}\| \chi_{(\phi(t),\infty)}(s)
\|_{F_0}
\\
&\le
\|f_0\|_{(X_0,X_1)_{F_0}}
+
\|\min(s,\phi(t))\|_{F_0}\|f_1\|_{X_1}
\\
&=
\|f_0\|_{(X_0,X_1)_{F_0}}
+
t\|f_1\|_{X_1}.
\end{align*}
Thus, taking the infimum over all decompositions  $f=f_0+f_1$ of the function $f$, with $f_0 \in (X_0,X_1)_{F_0}$ and
$f_1 \in X_1,$
%As one can take any decomposition $f=f_0+f_1$ of the function $f$,
we arrive at the estimate  LHS(\ref{A1})$\gtrsim$RHS(\ref{A1}).  %with $"\gtrsim"$.

To prove the opposite estimate,
take $t \in (\Xi(0), \Xi(\infty))$ and
suppose that
%  $f\in X_0+X_1$, and that
%both $\|K(f,s; X_0,X_1) \chi_{(0,\phi(t))}(s)
%\|_{F_0}$ and $K(f,\phi(t); X_0,X_1) \|
%\chi_{(\phi(t),\infty)}(s)
%\|_{F_0}$ are finite.
 %Let
 $f=f_0+f_1$, with $f_0\in X_0$, $f_1\in X_1$,
 be such a representation that %of $f$ that
$$\|f_0\|_{X_0}+\phi(t)\|f_1\|_{X_1}\le 2K(f,\phi(t);X_0,X_1).
$$
Since, for all $s>0$,
$$K(f_0,s;X_0,X_1)\le \|f_0\|_{X_0}\le 2K(f,\phi(t);X_0,X_1)%\quad \mbox{for all\ }s>0,
$$
and
$$\frac{K(f_1,s;X_0,X_1)}{s}\le \|f_1\|_{X_1}\le   \frac{2 }{\phi(t)}K(f,\phi(t);X_0,X_1),%\quad \mbox{for all\ }s>0,
$$
 we get, for all $f \in (X_0,X_1)_{F_0}+X_1,$

\begin{align*}
K(f,t; (X_0,X_1)_{F_0},X_1)&\le\|f_0\|_{(X_0,X_1)_{F_0}} + t \|f_1\|_{X_1}
\\
&\lesssim
\| K(f_0,s; X_0,X_1)\|_{F_0} +t  \frac{K(f,\phi(t); X_0,X_1) }{\phi(t)}\\
&\lesssim \| K(f_0,s; X_0,X_1)\chi_{(0,\phi(t))}(s)\|_{F_0} +\| K(f_0,s; X_0,X_1)\chi_{(\phi(t),\infty)}(s)\|_{F_0} \\
&\qquad+t  \frac{K(f,\phi(t); X_0,X_1) }{\phi(t)}=:J_1+J_2+J_3.
\end{align*}

As  $f_0=f-f_1$, we obtain
\begin{align*}
J_1%\| K(f_0,s; X_0,X_1)\chi_{(0,\phi(t))}(s)\|_{F_0}
&\le
\|K(f,s; X_0,X_1)\chi_{(0,\phi(t))}(s)\|_{F_0} %\\&\qquad
+\|K(f_1,s; X_0,X_1)\chi_{(0,\phi(t))}(s)\|_{F_0}\\
&\le  \|K(f,s; X_0,X_1)\chi_{(0,\phi(t))}(s)\|_{F_0} + \|f_1\|_{X_1}\|s\chi_{(0,\phi(t))}(s)\|_{F_0}\\
&\lesssim\|K(f,s; X_0,X_1)\chi_{(0,\phi(t))}(s)\|_{F_0} +\frac{K(f,\phi(t); X_0,X_1) }{\phi(t)}\|s\chi_{(0,\phi(t))}(s)\|_{F_0}\\
&\lesssim \|K(f,s; X_0,X_1)\chi_{(0,\phi(t))}(s)\|_{F_0}
\end{align*}
and
\begin{align*}
%\| K(f_0,s; X_0,X_1)\chi_{(\phi(t),\infty)}(s)\|_{F_0}
J_2&\lesssim \|f_0\|_{X_0}\| \chi_{(\phi(t),\infty)}(s)\|_{F_0}\lesssim
K(f,\phi(t); X_0,X_1)\|\chi_{(\phi(t),\infty)}(s)\|_{F_0}.
\end{align*}
Since $t=\Xi(\phi(t))\le \|
s\,\chi_{(0,\phi(t))}(s)
\|_{F_0}+\phi(t)\|
\chi_{(\phi(t),\infty)}(s)
\|_{F_0}$, we get
 %Using the definition of $\phi$ and Lemma \ref{lemma-reverse}, we get
\begin{align*}
J_3
%t  \frac{K(f,\phi(t); X_0,X_1) }{\phi(t)}
&\le
\Big(\|
s\,\chi_{(0,\phi(t))}(s)
\|_{F_0}+\phi(t)\|
\chi_{(\phi(t),\infty)}(s)
\|_{F_0}\Big)\frac{K(f,\phi(t); X_0,X_1) }{\phi(t)}
\\
&\le
\|
K(f,s; X_0,X_1) \chi_{(0,\phi(t))}(s)
\|_{F_0}
%\\ &\qquad
+
 K(f,\phi(t); X_0,X_1) \|
\chi_{(\phi(t),\infty)}(s)
\|_{F_0}.
\end{align*}
Consequently, for all $t \in (\Xi(0), \Xi(\infty))$ and $f \in (X_0,X_1)_{F_0}+X_1,$
$$%\[
 K(f,t; (X_0,X_1)_{F_0},X_1)\lesssim
\|
K(f,s; X_0,X_1) \chi_{(0,\phi(t))}(s)
\|_{F_0} +K(f,\phi(t); X_0,X_1) \|
\chi_{(\phi(t),\infty)}(s)
\|_{F_0}.
$$%\]

To prove part (B), we notice  that
(cf. \cite[Chapter~V, Prop. 1.2]{besh})
%\begin{eqnarray*}
\begin{align*}
%\begin{split}
K(f,t; X_0, (X_0,X_1)_{F_1})&=
t K(f,1/t; (X_0,X_1)_{F_1},X_0)
=
t K(f,1/t; (X_1,X_0)_{\widetilde{{F_1}}},X_0),
%\end{split}
%\end{eqnarray*}
\end{align*}
 where
 $\widetilde{{F_1}}=\{f: tf(1/t)\in {F_1} \}$
and
 $\|f\|_{\widetilde{{F_1}}}=\|tf(1/t)\|_{{F_1}}.$
Now we apply part (A) and the reverse preceding substitutions to arrive at the statement.
 \end{proof}

\subsection{Inequalities for $K$-functionals
involving the potential-type operators}
% via Holmstedt formula
%Embedding theorems imply  (interpolation theory's approach)

Let $\{A^\tau\}_{\tau\in \mathfrak{M}}$, where
  $ \mathfrak{M}=\{\tau: 0\le \tau<\tau_0\}$ or $\mathfrak{M}=\{k\in\mathbb{N}_0: k<\tau_0\}$ for some $\tau_0\in (0,\infty)$,
 be a family of linear operators defined on
 $L_1(\R^n)+L_\infty(\R^n)$ satisfying
%  satisfying
\begin{enumerate}
\item[(P1)] \;$A^\tau: X\to X$  \;for any \; $\tau\in \mathfrak{M}$ and for any function gaged cone  %r.i. Banach function space
$X\subset L_1(\R^n)+L_\infty(\R^n)$;
   \item[(P2)] \;$A^0 X=X$ for   any function gaged cone  %r.i. Banach function space
$X\subset L_1(\R^n)+L_\infty(\R^n)$; %for any r.i. Banach function space $X$;
   \item[(P3)] \;$A^\tau(A^\sigma X)=A^\sigma(A^\tau X)=A^{\tau+\sigma} X$  \;
 for any function gaged cone  %r.i. Banach function space
$X\subset L_1(\R^n)+L_\infty(\R^n)$
%;   for any r.i. Banach function space $X$
and  for any $\sigma,\tau,\sigma+\tau\in \mathfrak{M}$.
 \end{enumerate}
 Here $ A^\tau X$,  $\tau\in \mathfrak{M},$ is the range of $ A^\tau$ equipped with the gage (or norm)
$$
\|f\|_{A^\tau X} =\|f\|_X+|f|_{A^\tau X},\quad\mbox{where}\quad
 |f|_{A^\tau X}=\inf\{\|g\|_X: f=A^\tau g\}.
$$

\begin{theorem} \label{emb-ul-holm} {\textnormal{(Ul'yanov-type inequalities)}}
Assume that $X$ is an r.i. Banach function space %, endowed with the norm $\|\cdot\|_X$,
  and
$ Y,Z\subset L_1+L_\infty$ are function gaged cones.
%\footnote{Recall that  %such that $Y,Z\subset L_1+L_\infty$
% $\rho_Y=\|\cdot\|_Y$ and $\rho_Z=\|\cdot\|_Z$ are positive functionals (not necessarily linear).}

%, respectively, % \footnote{S: I deleted the assumption $Y\subset Z \subset X$ -- otherwise we have problems already with $L_p$} and

%Assume that $X$, $Y$, and $Z$ are r.i. Banach function spaces, endowed with the norms
%$\|\cdot\|_X$, $\|\cdot\|_Y$, and $\|\cdot\|_Z$, respectively, \footnote{S: I deleted the assumption $Y\subset Z \subset X$ -- otherwise we have %problems already with $L_p$} and
Let
 \begin{equation}\label{emb1}
A^{\sigma+\tau}X\hookrightarrow Y,\quad \quad  A^{\sigma}X\hookrightarrow Z,\quad \mbox{for some $\tau>0$ and $\sigma\ge 0$.}
\end{equation}

{\textnormal{(A)}} Let $F_0$ %and $F_1$
 be a Banach lattice  over $((0,\infty),dt/t)$ satisfying
\begin{equation}\label{emb2}
(X,Y)_{F_0}\hookrightarrow  Z.%\hookrightarrow (X,Y)_{F_1}=:V.
\end{equation}
Assume that the function
$\Xi_0(t):=\|\min(\cdot,t)\|_{F_0},$ %and $\Xi_1(t):=\|\min(\cdot,t)\|_{F_1}$ ,
$t \in (0,\infty)$,
is such that $\Xi_0(1)<\infty$. %and $\Xi_1(1)<\infty.$
If  $\phi_0$  %and  $\phi_1$
is the generalized reverse function of $\Xi_0$, %and   $\Xi_1$, respectively,
%(A).
then
\begin{align} \label{emb3}
%\begin{split}
K(f,t; Z,A^\tau Z) &\lesssim
\|
K(f,s; X,A^{\sigma+\tau}X)
\chi_{(0,\phi_0(t))}(s)
\|_{F_0}
\\&\qquad +K(f,\phi_0(t); X,A^{\sigma+\tau}X)\| \chi_{(\phi_0(t),\infty)}(s)\|_{F_0}\nonumber
%\end{split}
\end{align}
for all $t \in (\Xi_0(0), \Xi_0(\infty))$ and $f \in X$ {\rm (}for which ${\rm RHS}\eqref{emb3}$ is finite{\rm )}.

{\textnormal{(B)}} Let $F_1$
 be a Banach lattice  over $((0,\infty),dt/t)$ satisfying
\begin{equation}\label{emb2_1}
%(X,Y)_{F_0}\hookrightarrow
Z\hookrightarrow (X,Y)_{F_1}=:V.
\end{equation}
Assume that the function
 $\Xi_1(t):=\|\min(\cdot,t)\|_{F_1}$ ,
$t \in (0,\infty)$,
is such that  $\Xi_1(1)<\infty.$
If  $\phi_1$
is the generalized reverse function of  $\Xi_1$,
%(A).
then
\begin{align} \label{emb3'}
%\begin{split}
K(f,t; V,A^\tau V) &\lesssim
\|
K(f,s; X,A^{\sigma+\tau}X)
\chi_{(0,\phi_1(t))}(s)
\|_{F_1}
\\&\qquad +K(f,\phi_1(t); X,A^{\sigma+\tau}X)\| \chi_{(\phi_1(t),\infty)}(s)\|_{F_1}\nonumber
%\end{split}
\end{align}
for all $t \in (\Xi_1(0), \Xi_1(\infty))$ and $f \in X$ {\rm (}for which ${\rm RHS}\eqref{emb3'}$ is finite{\rm )}.
\end{theorem}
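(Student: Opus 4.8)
The plan is to reduce both parts (A) and (B) to the Holmstedt-type formula of Theorem \ref{holmsted-f}(A) applied to the couple $(X, A^{\sigma+\tau}X)$, and then to transplant the resulting estimate from the $A$-scale on $X$ to the $A$-scale on $Z$ (respectively $V$) using the monotonicity of $K$-functionals under embeddings together with the shift property (P3). The common core is the observation that, because of the first embedding in \eqref{emb1}, namely $A^{\sigma+\tau}X \hookrightarrow Y$, one has for any Banach lattice $G$ over $((0,\infty),dt/t)$ with $\Xi_G(1)<\infty$ the embedding $(X, A^{\sigma+\tau}X)_G \hookrightarrow (X,Y)_G$, since $K(f,t;X,A^{\sigma+\tau}X) \gtrsim K(f,t;X,Y)$ pointwise in $t$ and the lattice norm $\|\cdot\|_G$ is monotone. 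This is the link that lets the interpolation space $(X,Y)_{F_0}$ (or $(X,Y)_{F_1}$) be controlled by an interpolation space built from the $A$-scale, where the Holmstedt machinery directly applies.

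First I would prove (A). Since $A^\sigma X \hookrightarrow Z$ by \eqref{emb1}, applying $A^\tau$ and using (P1)--(P3) gives $A^{\sigma+\tau}X = A^\tau(A^\sigma X) \hookrightarrow A^\tau Z$; combined with the hypothesis $(X,Y)_{F_0}\hookrightarrow Z$ from \eqref{emb2} and the embedding $(X,A^{\sigma+\tau}X)_{F_0}\hookrightarrow (X,Y)_{F_0}$ noted above, we get the compatible-couple comparison $\bigl((X,A^{\sigma+\tau}X)_{F_0},\, A^{\sigma+\tau}X\bigr) \hookrightarrow (Z, A^\tau Z)$ in both coordinates. Hence $K(f,t; Z, A^\tau Z) \lesssim K\bigl(f,t; (X,A^{\sigma+\tau}X)_{F_0},\, A^{\sigma+\tau}X\bigr)$. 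Now apply Theorem \ref{holmsted-f}(A) with $(X_0,X_1)=(X,A^{\sigma+\tau}X)$ and $F_0$ as given (the hypothesis $\Xi_0(1)<\infty$ is exactly what Theorem \ref{holmsted-f}(A) requires, and $\phi_0$ is its generalized reverse function), which evaluates $K\bigl(f,t;(X,A^{\sigma+\tau}X)_{F_0}, A^{\sigma+\tau}X\bigr)$ as precisely $\text{RHS}\eqref{emb3}$. The admissible range $t\in(\Xi_0(0),\Xi_0(\infty))$ and the ``for which RHS is finite'' caveat are inherited verbatim from Theorem \ref{holmsted-f}(A) and Remark \ref{remark-}(ii).

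For part (B) the structure is the same but one level less direct, because now the good interpolation space $V=(X,Y)_{F_1}$ sits \emph{above} $Z$ rather than below it. From $Z \hookrightarrow V$ one gets $A^\tau Z \hookrightarrow A^\tau V$ by (P1), hence $K(f,t;V,A^\tau V)$ is already a $K$-functional for the couple one wants; I would then compare $(V, A^\tau V)=\bigl((X,Y)_{F_1}, A^\tau (X,Y)_{F_1}\bigr)$ against $\bigl((X,Y)_{F_1}, A^{\sigma+\tau}X\bigr)$ using $A^{\sigma+\tau}X \hookrightarrow A^\tau Y \hookrightarrow A^\tau V$ (the first from (P1) applied to $A^{\sigma+\tau}X\hookrightarrow A^\tau Y$, which follows from $A^{\sigma}X\hookrightarrow Y$... here one must instead use that $A^\tau$ of the embedding $A^\sigma X\hookrightarrow Z$ gives $A^{\sigma+\tau}X\hookrightarrow A^\tau Z\hookrightarrow A^\tau V$, avoiding $Y$ altogether), so that $K(f,t;V,A^\tau V)\lesssim K\bigl(f,t;(X,Y)_{F_1}, A^{\sigma+\tau}X\bigr)$. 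Then Theorem \ref{holmsted-f}(A), applied with $X_0=X$, $X_1=A^{\sigma+\tau}X$, lattice $F_1$, and noting $(X,Y)_{F_1}\supseteq (X,A^{\sigma+\tau}X)_{F_1}$ so that $K(f,t;(X,Y)_{F_1},A^{\sigma+\tau}X)\lesssim K(f,t;(X,A^{\sigma+\tau}X)_{F_1},A^{\sigma+\tau}X)$, produces $\text{RHS}\eqref{emb3'}$ on the nose, with $\phi_1$ the generalized reverse of $\Xi_1$.

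The step I expect to be the main obstacle is the bookkeeping around the non-linear/gaged-cone setting: $Y$ and $Z$ are only function gaged cones, not Banach function spaces, so I must check that (i) the construction $(X,Y)_{G}$ and the $K$-functional comparison under embeddings are legitimate in that generality — which the excerpt has carefully arranged in Section~3 — and (ii) the shift axioms (P1)--(P3) genuinely deliver the chain of embeddings $A^\sigma X\hookrightarrow Z \Rightarrow A^{\sigma+\tau}X\hookrightarrow A^\tau Z$ with control of the \emph{seminorms} $|\cdot|_{A^\tau Z}$, not merely the norms; this is where one uses that $|f|_{A^\tau Z}=\inf\{\|g\|_Z : f=A^\tau g\}$ and the embedding constant of $A^\sigma X\hookrightarrow Z$ transfers through. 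Everything else is a direct invocation of Theorem \ref{holmsted-f}(A) plus monotonicity of $\|\cdot\|_{F_i}$.
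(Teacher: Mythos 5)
Your proposal is correct and follows essentially the same route as the paper's own proof: both parts reduce to the Holmstedt-type formula of Theorem~\ref{holmsted-f}(A) for the couple $(X,A^{\sigma+\tau}X)$, after establishing the two embeddings $A^{\sigma+\tau}X=A^\tau(A^\sigma X)\hookrightarrow A^\tau Z$ (via (P3) and the second embedding in \eqref{emb1}) and $(X,A^{\sigma+\tau}X)_{F_i}\hookrightarrow(X,Y)_{F_i}$ (via the first embedding in \eqref{emb1} and monotonicity of $K$-functionals and lattice norms), then composing with \eqref{emb2} or \eqref{emb2_1}. Your two-step comparison in part~(B) is the same as the paper's single-step comparison $K(f,t;V,A^\tau V)\lesssim K(f,t;(X,A^{\sigma+\tau}X)_{F_1},A^{\sigma+\tau}X)$, just with the intermediate quantity $K(f,t;(X,Y)_{F_1},A^{\sigma+\tau}X)$ made explicit.
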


%{\bf Amiran, please think about "extension" of Th 4.1 and 4.3!}

\begin{remark} (i) It is clear from the proof  that inequality  (\ref{emb3}) holds provided that
$$%\begin{equation}\label{emb1}
A^{\sigma+\tau}X\hookrightarrow Y\quad \mbox{for some $\tau>0$ and $\sigma\ge 0$}
$$%\end{equation}
and
%Let $F_0$ be a  Banach lattice over $((0,\infty),dt/t)$ satisfying
$$%\begin{equation}\label{emb2}
A^{\sigma}X\hookrightarrow (X,Y)_{F_0}=:Z.
$$%\end{equation}

(ii)  A different, abstract approach to Ul'yanov inequalities,
based on semi-groups of linear equibounded operators in Banach spaces, is given in \cite{trwe2}.
\end{remark}

\begin{proof}[Proof of Theorem \ref{emb-ul-holm}]
The property (P3) of operators $A^\tau$
and the second embedding in (\ref{emb1})
 imply that
\begin{equation}\label{emb2'}
A^{\sigma+\tau}X\hookrightarrow A^\tau Z.
\end{equation}
Further,
using
 %(\ref{emb1}) and
 the first embedding in (\ref{emb1}) and (\ref{emb2}),
we get
$$(X, A^{\sigma+\tau}X)_{F_0} \hookrightarrow
(X, Y)_{F_0}\hookrightarrow Z.
$$
This, (\ref{emb2'}), and Theorem \ref{holmsted-f} (A) (see also Remark \ref{remark-} (ii)) yield, for all $t \in (\Xi_0(0), \Xi_0(\infty))$ and $f\in X$,
\begin{align*}%\label{emb2--}
 \begin{split}
K(f,t; Z,A^\tau Z)
& \lesssim
K(f,t; (X,A^{\sigma+\tau}X)_{F_0},A^{\sigma+\tau}X)
\\
&\approx
\|
K(f,s; X,A^{\sigma+\tau}X) \chi_{(0,\phi_0(t))}(s)
\|_{F_0} \\%\nonumber\\
&\qquad+ K(f,\phi_0(t); X,A^{\sigma+\tau}X)\| \chi_{(\phi_0(t),\infty)}(s)\|_{F_0},
%\nonumber
\end{split}
\end{align*}
and (\ref{emb3}) is proved.

%where the last equivalence follows from .
To obtain  \eqref{emb3'}, using
(\ref{emb2'}) and  (\ref{emb2_1}),
we arrive at
% follows same way using
%The property (P3) of $A^\tau$  together with second   embedding from (\ref{emb2})
% implies
$$%\begin{equation}\label{emb2''}
A^{\sigma+\tau}X\hookrightarrow A^\tau V.
$$%\end{equation}
Moreover, applying the first embedding in
 (\ref{emb1}) and definition of  $V$,
we obtain
$$(X, A^{\sigma+\tau}X)_{F_1} \hookrightarrow
(X, Y)_{F_1} =V.
$$
Consequently, for all $t>0,$
$$
%\begin{eqnarray} %\label{emb3'}
%
K(f,t; V,A^\tau V) \lesssim
K(f,t; (X, A^{\sigma+\tau}X)_{F_1},A^{\sigma+\tau}X),
%\\&\qquad \qquad+K(f,\phi_1(t); X,A^{\sigma+\tau}X)\| \chi_{(\phi_1(t),\infty)}(s)\|_{F_1},
$$%\end{split}
%\end{eqnarray}
which, together with  Theorem \ref{holmsted-f} (A) (and Remark \ref{remark-} (ii)), yields  \eqref{emb3'}.
\end{proof}

Using part (B) of Theorem  \ref{holmsted-f}, one can prove the following results (Marchaud and reverse Marchaud-type %inequality) Jackson-type
 inequalities).

\begin{theorem} \label{emb-marchaud-holm}
Assume that $X$ is an r.i. Banach function space. % endowed with the norm $\|\cdot\|_X$.
 Let $F_1$ be a  Banach lattice over $((0,\infty),dt/t)$.
Assume  that the function
$\Theta(t):=
t/
 \|\min(\cdot,t)\|_{F_1}$, $t \in (0,\infty)$,
satisfies $\Theta(1)<\infty$
and that $\psi$ is the generalized reverse function of
$
\Theta.
$
 \vskip 0.1cm

{\textnormal{ (A)}}  {\rm (Marchaud-type inequality)}
If \begin{equation}\label{emb22}
(X,A^{\sigma+\tau}X)_{F_1} \hookrightarrow
A^{\tau}X,\quad\mbox{with some $\tau,\sigma>0$,}
\end{equation}
then
\begin{align}
%\begin{split}
\label{emb23--}
K(f,t; X,A^\tau X) &\lesssim
t\frac{K(f,\psi(t); X,A^{\sigma+\tau}X)}{\psi(t)}
\|
 s\,\chi_{(0,\psi(t))}(s)
\|_{F_1}
\\
&
\qquad+
t\|
K(f,s; X,A^{\sigma+\tau}X)
\chi_{(\psi(t),\infty)}(s)\|_{F_1}\nonumber
%\end{split}
\end{align}
for all $t \in (\Theta(0), \Theta(\infty))$ and $f \in X$ {\rm (}for which ${\rm RHS}\eqref{emb23--}$ is finite{\rm )}.
 \vskip 0.1cm

{\textnormal{ (B)}} {\rm (Reverse Marchaud-type %inequality Jackson-type
inequality)}
If
\begin{equation}\label{emb22>}
A^{\tau}X \hookrightarrow
(X,A^{\sigma+\tau}X)_{F_1},\quad \mbox{with some $\tau,\sigma>0$,
}
\end{equation}
then
\begin{eqnarray}
%\begin{split}
\label{emb23-->}
\quad t\frac{K(f,\psi(t); X,A^{\sigma+\tau}X)}{\psi(t)}
\|
 s\,\chi_{(0,\psi(t))}(s)
\|_{F_1}\!\!\!\!
&+
t\|
K(f,s; X,A^{\sigma+\tau}X)
\chi_{(\psi(t),\infty)}(s)
 \|_{F_1}
\\
&\qquad \qquad \qquad \qquad \lesssim
K(f,t; X,A^\tau X) \notag
%\end{split}
\end{eqnarray}
for all $t \in (\Theta(0), \Theta(\infty))$ and $f \in X$ {\rm (}for which ${\rm RHS}\eqref{emb23-->}$ is finite{\rm )}.
\end{theorem}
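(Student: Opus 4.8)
The plan is to deduce Theorem~\ref{emb-marchaud-holm} from the abstract Holmstedt formula of Theorem~\ref{holmsted-f}(B) in exactly the same spirit as Theorem~\ref{emb-ul-holm} was deduced from part (A). First I would record the auxiliary embedding furnished by property (P3): since $A^{\sigma+\tau}X = A^\sigma(A^\tau X)$, the range $A^{\sigma+\tau}X$ is continuously embedded into $A^\tau X$ viewed with its own gage, i.e.
\[
A^{\sigma+\tau}X \hookrightarrow A^\tau X .
\]
Combining this with the hypothesis \eqref{emb22}, namely $(X,A^{\sigma+\tau}X)_{F_1}\hookrightarrow A^\tau X$, both ``endpoints'' of the couple $\bigl((X,A^{\sigma+\tau}X)_{F_1},\,A^{\sigma+\tau}X\bigr)$ embed into $A^\tau X$ and $X\hookrightarrow(X,A^{\sigma+\tau}X)_{F_1}$, so the $K$-functional is monotone in the couple and
\[
K(f,t; X, A^\tau X)\lesssim K\bigl(f,t; X,\,(X,A^{\sigma+\tau}X)_{F_1}\bigr)
\]
for all $t>0$ and $f\in X$. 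For part (B) one argues symmetrically: the hypothesis \eqref{emb22>} gives $A^\tau X\hookrightarrow(X,A^{\sigma+\tau}X)_{F_1}$, while trivially $(X,A^{\sigma+\tau}X)_{F_1}\hookrightarrow X+A^{\sigma+\tau}X$ and $A^{\sigma+\tau}X\hookrightarrow A^\tau X$, whence the reverse comparison $K\bigl(f,t; X,\,(X,A^{\sigma+\tau}X)_{F_1}\bigr)\lesssim K(f,t; X, A^\tau X)$.

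The second step is to apply Theorem~\ref{holmsted-f}(B) with $X_0:=X$, $X_1:=A^{\sigma+\tau}X$, and the given lattice $F_1$, whose associated function $\Theta(t)=t/\|\min(\cdot,t)\|_{F_1}$ satisfies $\Theta(1)<\infty$ by assumption, and with $\psi$ its generalized reverse function. Formula \eqref{A2} then evaluates $K\bigl(f,t; X,(X,A^{\sigma+\tau}X)_{F_1}\bigr)$, for $t\in(\Theta(0),\Theta(\infty))$, as the two-term expression
\[
t\,\frac{K(f,\psi(t); X,A^{\sigma+\tau}X)}{\psi(t)}\,\bigl\|\,s\,\chi_{(0,\psi(t))}(s)\,\bigr\|_{F_1}
\;+\;t\,\bigl\|\,K(f,s; X,A^{\sigma+\tau}X)\,\chi_{(\psi(t),\infty)}(s)\,\bigr\|_{F_1}.
\]
Chaining this equivalence with the one-sided comparison from the first step immediately yields \eqref{emb23--} in case (A) and \eqref{emb23-->} in case (B). The statement about the range of $t$ and the finiteness clause is inherited verbatim from Theorem~\ref{holmsted-f}(B) together with Remark~\ref{remark-}(ii).

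There is essentially no genuine obstacle here; the content is a bookkeeping assembly of results already proved. The one point that needs a little care is the invocation of Theorem~\ref{holmsted-f} for the couple $(X, A^{\sigma+\tau}X)$ of function gaged cones rather than of Banach function spaces: I would note (as in the proof of Theorem~\ref{emb-ul-holm}) that $A^{\sigma+\tau}X\subset L_1+L_\infty$ by (P1), so the couple is compatible in the required sense, and that Remark~\ref{remark-}(i) extends \eqref{A2} to the $K_0$-type functionals used throughout; the seminorm on $A^{\sigma+\tau}X$ is the one built into its gage, $|f|_{A^{\sigma+\tau}X}=\inf\{\|g\|_X: f=A^{\sigma+\tau}g\}$. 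A minor secondary point is verifying that $(X,A^{\sigma+\tau}X)_{F_1}$ is itself an admissible first member of the couple in \eqref{A2}, which is automatic since it is a Banach lattice-interpolation space sitting inside $X+A^{\sigma+\tau}X\subset L_1+L_\infty$. With these remarks in place the proof is complete.
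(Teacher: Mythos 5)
Your proof is correct and follows the same route as the paper: use the embedding hypothesis \eqref{emb22} (resp.\ \eqref{emb22>}) to compare $K(f,t;X,A^\tau X)$ with $K\bigl(f,t;X,(X,A^{\sigma+\tau}X)_{F_1}\bigr)$ via the monotonicity of the $K$-functional in its second argument, and then apply Theorem~\ref{holmsted-f}(B) with $X_0=X$, $X_1=A^{\sigma+\tau}X$. The only inessential detour is your invocation of the auxiliary embedding $A^{\sigma+\tau}X\hookrightarrow A^\tau X$ from (P3) together with the ``both endpoints'' remark --- the one-sided comparison of $K$-functionals needs only $(X,A^{\sigma+\tau}X)_{F_1}\hookrightarrow A^\tau X$ (resp.\ its reverse) --- but this extra material does not affect the validity of the argument.
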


\begin{proof}
To prove (A),  we obtain, by (\ref{emb22})
and Theorem  \ref{holmsted-f} (B) (see also Remark~\ref{remark-}~(ii)),
\begin{align*}
%\begin{split}
K(f,t; X,A^\tau X) &\lesssim
K(f,t; X,(X,A^{\sigma+\tau}X)_{F_1})
\\&\approx
t\frac{K(f,\psi(t); X,A^{\sigma+\tau}X)}{\psi(t)}
\|
 s\,\chi_{(0,\psi(t))}(s)
\|_{F_1}
\\
&\qquad+t\|
K(f,s; X,A^{\sigma+\tau}X)
\chi_{(\psi(t),\infty)}(s) \|_{F_1}
%\end{split}
\end{align*}
for any $t \in (\Theta(0), \Theta(\infty))$ and $f \in X$.

In part (B)  embedding (\ref{emb22>}) is  reverse to (\ref{emb22}), therefore the above inequality sign is also reverse.
\end{proof}

Combining  parts (A) and (B) of Theorem \ref{holmsted-f}, we  obtain  the following result.

\begin{theorem} \label{emb-kol-holm} {\rm (Kolyada-type inequality)}
Assume that $X$ and $Z$, $Z\subset X$, are r.i. Banach function spaces.
 % endowed with the norms $\|\cdot\|_X$ and $\|\cdot\|_Z$, respectively.
Let $F_0, F_1$ be   Banach lattices over $((0,\infty),dt/t)$ satisfying, for some $\tau>0$ and $\sigma\ge 0$,
\begin{equation}\label{+emb1}
(X,A^{\tau+\sigma}X)_{F_0}\hookrightarrow Z %\quad \mbox{for some $\tau>0$ and $\sigma\ge 0$}
\end{equation}
and
\begin{equation}\label{+emb2}
A^{\tau+\sigma}X\hookrightarrow (Z,A^{\tau}Z)_{F_1}.%\quad \mbox{for some $\tau>0$ and $\sigma\ge 0$}.
\end{equation}
Assume  that the functions
\,$\Xi(t):=\|\min(\cdot,t)\|_{F_0}$ and
$\Theta(t):=
t/
 \|\min(\cdot,t)\|_{F_1}$, $t \in (0,\infty)$,
satisfy $\Xi(1)<\infty$ and  $\Theta(1)<\infty$.
If  $\phi$ and $\psi$ are the generalized reverse functions of $\Xi$ and
$
\Theta,
$ respectively,
then
\begin{align}
%\begin{multline}
%\begin{split}
\label{+emb3}
t\frac{K(f,\psi(t); Z,A^{\tau}Z)}{\psi(t)}
\|
 s\,&\chi_{(0,\psi(t))}(s)
\|_{F_1}
+
t\|
K(f,s; Z,A^{\tau}Z)
\chi_{(\psi(t),\infty)}(s)
\|_{F_1}
\\ \nonumber
&\ \ \quad\lesssim
\|
K(f,s; X,A^{\tau+\sigma}X)
\chi_{(0,\phi(t))}(s)
\|_{F_0}\\ \nonumber
%\qquad\qquad\\ \notag
&\ \ \quad\qquad+
K(f,\phi(t); X,A^{\tau+\sigma}X)\| \chi_{(\phi(t),\infty)}(s)\|_{F_0}%\nonumber
%\end{split}
%\end{multline}
\end{align}
for all  $t \in (\Xi(0), \Xi(\infty))\cap(\Theta(0), \Theta(\infty))$
 and  $f \in X$ {\rm (}for which ${\rm RHS}\eqref {+emb3}$ is finite{\rm )}.\end{theorem}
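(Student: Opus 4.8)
The plan is to establish \eqref{+emb3} by a purely formal argument: wedge its left-hand side between two abstract $K$-functionals and then invoke the two parts of Theorem~\ref{holmsted-f} together with the monotonicity of the $K$-functional in each of its two end spaces. Beyond the embeddings \eqref{+emb1}, \eqref{+emb2} and Theorem~\ref{holmsted-f} itself, nothing new is required.

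Concretely, I would proceed in four steps. First, apply Theorem~\ref{holmsted-f}(B) to the compatible couple $(Z,A^\tau Z)$ with the Banach lattice $F_1$; since the function occurring there is exactly $\Theta(t)=t/\|\min(\cdot,t)\|_{F_1}$ with generalized reverse $\psi$, formula \eqref{A2} shows that the left-hand side of \eqref{+emb3} is equivalent to $K(f,t;Z,(Z,A^\tau Z)_{F_1})$, for $t\in(\Theta(0),\Theta(\infty))$. Second, since $A^{\tau+\sigma}X\hookrightarrow(Z,A^\tau Z)_{F_1}$ by \eqref{+emb2}, enlarging the family of admissible decompositions $f=f_0+f_1$ and using $\|f_1\|_{(Z,A^\tau Z)_{F_1}}\lesssim\|f_1\|_{A^{\tau+\sigma}X}$ gives $K(f,t;Z,(Z,A^\tau Z)_{F_1})\lesssim K(f,t;Z,A^{\tau+\sigma}X)$. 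Third, since $(X,A^{\tau+\sigma}X)_{F_0}\hookrightarrow Z$ by \eqref{+emb1}, the analogous monotonicity, now in the first end space, gives $K(f,t;Z,A^{\tau+\sigma}X)\lesssim K(f,t;(X,A^{\tau+\sigma}X)_{F_0},A^{\tau+\sigma}X)$. Fourth, apply Theorem~\ref{holmsted-f}(A) to the compatible couple $(X,A^{\tau+\sigma}X)$ with the Banach lattice $F_0$; since the function occurring there is $\Xi(t)=\|\min(\cdot,t)\|_{F_0}$ with generalized reverse $\phi$, formula \eqref{A1} shows that $K(f,t;(X,A^{\tau+\sigma}X)_{F_0},A^{\tau+\sigma}X)$ is equivalent to precisely the right-hand side of \eqref{+emb3}, for $t\in(\Xi(0),\Xi(\infty))$. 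Concatenating the three middle relations
\[
K(f,t;Z,(Z,A^\tau Z)_{F_1})\;\lesssim\;K(f,t;Z,A^{\tau+\sigma}X)\;\lesssim\;K(f,t;(X,A^{\tau+\sigma}X)_{F_0},A^{\tau+\sigma}X)
\]
with the two equivalences from Theorem~\ref{holmsted-f} yields \eqref{+emb3} for every $t$ in the stated intersection $(\Xi(0),\Xi(\infty))\cap(\Theta(0),\Theta(\infty))$.

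Each of the four steps is immediate; the care goes into the bookkeeping. One must check that $(Z,A^\tau Z)$ and $(X,A^{\tau+\sigma}X)$ are genuine compatible couples inside $L_1+L_\infty$, which is ensured by the properties (P1)--(P3) of the family $\{A^\tau\}$ and by the inclusion $Z\subset X$, and one must keep track of the two validity ranges of $t$ coming from the two Holmstedt formulas, which is why the conclusion is stated on their intersection. To obtain \eqref{+emb3} for every $f\in X$ for which its right-hand side is finite---rather than only for $f$ a priori lying in all the relevant sum spaces---one appeals to Remark~\ref{remark-}(ii): finiteness of the right-hand side of \eqref{+emb3} means, via the Holmstedt formula used in the fourth step, that $f\in(X,A^{\tau+\sigma}X)_{F_0}+A^{\tau+\sigma}X$, and then \eqref{+emb1} and \eqref{+emb2} place $f$ in $Z+(Z,A^\tau Z)_{F_1}$, which is exactly the domain on which Theorem~\ref{holmsted-f}(B) is applicable in the first step. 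The only genuine obstacle is to run this domain-and-range bookkeeping consistently across the four steps so that every $K$-functional appearing in the chain is well defined and finite; once that is arranged, the proof is a short concatenation of already established facts.
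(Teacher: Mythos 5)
Your proposal is correct and follows essentially the same route as the paper's proof: sandwich the two sides of \eqref{+emb3} between $K(f,t;Z,(Z,A^\tau Z)_{F_1})$ and $K(f,t;(X,A^{\tau+\sigma}X)_{F_0},A^{\tau+\sigma}X)$ via the embeddings \eqref{+emb1}--\eqref{+emb2}, then identify each endpoint using parts (B) and (A) of Theorem~\ref{holmsted-f}. The only cosmetic difference is that you interpose the intermediate $K$-functional $K(f,t;Z,A^{\tau+\sigma}X)$, whereas the paper deduces the single chained inequality directly from the two embeddings.
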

\begin{proof}
Taking into account
(\ref{+emb1}) and (\ref{+emb2}), we get
$$
K(f,t; Z,(Z,A^{\tau}Z)_{F_1})\lesssim
K(f,t; (X,A^{\tau+\sigma}X)_{F_0},A^{\tau+\sigma}X)\ \  \mbox{ for all }\  t>0.
$$
To complete the proof, note that,
by Theorem \ref{holmsted-f} (B),
$$K(f,t; Z,(Z,A^{\tau}Z)_{F_1})\approx \text{LHS}(\ref{+emb3}) \ \ \mbox{ for all }\   t \in (\Theta(0), \Theta(\infty))$$
%is equivalent to
%LHS of  (\ref{+emb3})
%LHS(\ref{+emb3})
%for all  $t \in (\Theta(0+), \Theta(\infty))$
and, by Theorem \ref{holmsted-f} (A),
$$K(f,t; (X,A^{\tau+\sigma}X)_{F_0},A^{\tau+\sigma}X)\approx \text{RHS}(\ref{+emb3}) \ \ \mbox{ for all }\  t \in (\Xi(0), \Xi(\infty)) .$$
%is equivalent to
% of  (\ref{+emb3})
%for all  $t \in (\Xi(0+), \Xi(\infty))$.
\end{proof}
\begin{remark}\label{remark--}
(i) Note that Theorems \ref{emb-ul-holm}, \ref{emb-marchaud-holm}, and \ref{emb-kol-holm} remain true if the $K$-functional $K$
is replaced by the $K$-functional $K_0$ or by the $K$-functional $K_1$ given by \eqref{k-f0} or by \eqref{k-f1}.

(ii)
Theorems
\ref{holmsted-f},
\ref{emb-ul-holm}, \ref{emb-marchaud-holm}, and \ref{emb-kol-holm} are true if the Banach function spaces are replaced by function gaged  cones.
 %the $K$-functional $K_0$ or by the $K$-functional $K_1$ given by \eqref{k-f0} or by \eqref{k-f1}.

\end{remark}

%\footnote{S: new example, discussions before and after}
To give a flavor of how to use Theorems \ref{emb-ul-holm}, \ref{emb-marchaud-holm}, and \ref{emb-kol-holm}, we present the following
examples on the classical Ul'yanov inequality (\ref{ulul}) and  sharp Ul'yanov inequality in the Lorentz setting, cf. Proposition 1.3.

\begin{example}
%If $k, n\in {\mathbb N},$ $1\le p<\infty$, and $0<k < n/p$.
%In fact we are going to
We obtain the following extension of the classical Ul'yanov inequality (\ref{ulul}):

{\it
If  $1 \le p<\infty$, $k, n\in {\mathbb N},$ $0<\delta< \min (k,  n/p)$,  and $1/p^*=1/p-\delta/n,$ then
%\footnote{S: I replaced in (1.6)-(1.8) $\sigma\to\delta$ to be consistent with (1.18) and (1.19)}%, for $\, t\to 0+,$
%If $1<p<p^*< \infty$ and $k\in {\mathbb N},$ then, for $\, t\to 0+,$
  \begin{equation}\label{ulul---}
t^{k-\delta}\sup\limits_{t\le u<1}\frac{\omega_k(f,u)_{L_{p^*}}}
{u^{k-\delta}}\lesssim
\left( \int_{0}^t \Big[ u^{-\delta}
\omega_{k}(f,u)_{L_{{p}}}\Big]^{p^*} \frac{du}{u}\right)^{1/p^*}
\quad  \mbox{as } \ \,t\to 0+%\frac1{p^*}=\frac1p-\frac{\sigma}n, \ \ 0<\sigma < n/p.
\end{equation}
holds for all $f \in L_p(\mathbb R^n)$.
} %as $t\to 0+,$

Note that, since LHS(\ref{ulul})$\le$ LHS(\ref{ulul---}), inequality (\ref{ulul}) follows from (\ref{ulul---}). Moreover,
(\ref{ulul---}) provides a sharper bound from below. Indeed, considering $f\in C^\infty$ implies  $\omega_{k}(f,u)_{L_{{r}}}\approx u^k$, $0<u<1$, for any  $1\le r\le \infty$.
Thus,  inequality (\ref{ulul}) even for smooth functions  gives only the rough  estimate $t^{k} \lesssim t^{k-\delta}$ while (\ref{ulul---}) becomes an equivalence.

 %,  and $1/p^*=1/p-k/n$.
To prove (\ref{ulul---}), first, we apply Sobolev's embedding % (see e.g. \cite{talenti}),
 $\dot{W}^kL_p\hookrightarrow L_{\bar{p}}^c$ with $1\le p<n/k$ and $1/{\bar{p}}=1/p-k/n$
 (here, as usual, $\dot{W}^kL_p$ is the homogeneous Sobolev space and  $L_{\bar{p}}^c= L_{\bar{p}}/\{constants\}$ is the factor space with the
  norm $\|f\|_{L_{\bar{p}}^c} =\inf_{ c\in \R^1}\|f-c\|_{{\bar{p}}}$).
  See the book \cite[1.77, 1.78]{maly} for the case $k=1.$
For $k>1$, it  follows from the Poincar\'e inequality, namely,
 $$\| f- c\|_{L_{\bar{p}}}\lesssim \|f^\#\|_{L_{\bar{p}}}  \lesssim \|f^\#_{k}\|_{L_p}  \lesssim|f|_{{W^kL_p}}, \quad 1< p< n/k ,$$
  where $ c=\lim_{t\to \infty}f^*(t)$ and $f^\#_{k}$ is the maximal function  given by
  $f^\#_{k}(x)=\sup\limits_{x\in Q}\frac1{|Q|^{1+\frac{k}{n}}}\int_Q |f-P_k f|$, $P_k f$ is a linear projection mapping ${L_1}$ onto the space of polynomials of degree at most $k$, and $f^\#=f^\#_{0}$.
 The first estimate follows from \cite[Corollary 4.3]{bagbykurz} and Hardy type inequalities, the second and third estimates from  \cite[Theorem 9.3, Theorem 5.6, and Corollary 2.2]{devoresharpley}.

 For $p=1$ we obtain by same way
  \[\| f- c\|_{L_{\frac{n}{n-k}, \infty}}\lesssim \|f^\#\|_{L_{\frac{n}{n-k}, \infty}} \lesssim \|f^\#_k\|_{L_{\frac{n}{n-k},\infty}}  \lesssim|f|_{{W^kL_1}}.\]
 By truncated method (\cite[Theorem 7.2.1]{adams}), we can obtain
 \[\| f- c\|_{L_{\frac{n}{n-k}}}\lesssim  |f|_{{W^kL_1}}. \]
By interpolation (see \cite{petunin}),
 \begin{equation}\label{ulul--1}(L_{p}, \dot{W}^kL_p)_{\alpha,{p^*}}
 =\dot{B}^\delta_{p,p*}
 \hookrightarrow L_{p^*}^c =(L_{p}, L_{\bar{p}}^c)_{\alpha,{p^*}}
  \end{equation}
 with $\alpha:=\delta/k$ %for  $0<p^*<\infty$,
 and $1/{p^*}%=1/p-\alpha k/n
  =1/p-\delta/n$. % with %. In fact, we will further use
%    $\alpha:=\delta/k$.

On the other hand, since
$L_p\hookrightarrow
L_{p,\infty}=(L_{p^*n/(n+p^*k)}, L_{p^*}^c)_{1-\alpha,{\infty}}$
and
$\dot{W}^kL_{p^*n/(n+p^*k)}
\hookrightarrow L_{p^*}$, we obtain
\begin{align}
\label{ulul--2}
\dot{W}^kL_p\!
&\hookrightarrow\!
\dot{W}^k
(L_{p^*n/(n+p^*k)}, L_{p^*})_{1-\alpha,{\infty}}
\\ \nonumber
&=
(\dot{W}^kL_{p^*n/(n+p^*k)}, \dot{W}^kL_{p^*})_{1-\alpha,{\infty}}
\!\hookrightarrow
\!(L_{p^*}, \dot{W}^kL_{p^*})_{1-\alpha,{\infty}},
\end{align}
where the equality  follows from \cite{peetre} and  \cite{nilson}.
%and the last embedding  is the Sobolev embedding.

Embeddings (\ref{ulul--1}), (\ref{ulul--2}),
Theorem \ref{emb-kol-holm} (with $\sigma=0$ and $K_0$ instead of $K$), and the known relation
$\omega_k(f,t^{1/k})_{L_{p}}\approx
K_0(f,t; L_{p},{W}^k L_{p})$
  give
  \begin{equation}\label{ulul--2--}
t^{1-\alpha}\sup\limits_{t\le s}\frac{\omega_k(f,s^{1/k})_{L_{p^*}}}
{s^{1-\alpha}}\lesssim \left( \int_{0}^t \Big[ u^{-\alpha}
\omega_{k}(f,u^{1/k})_{L_{{p}}}\Big]^{p^*} \frac{du}{u}\right)^{1/p^*}
\end{equation}
 for $f\in L_p$ and $t >0$ if $0<\alpha<1$ and $1/{p^*}=1/p-\alpha k/n$.
Finally, (\ref{ulul--2--}) and the change of variables  yield (\ref{ulul---}).
%{\bf By monotonicity properties of moduli of smoothness, namely $s^{-k}\omega_k(f,s)_{L_{q}}\lesssim
%z^{-k}\omega_k(f,z)_{L_{q}}$ for $z\le s$, (\ref{ulul--2--}) is equivalent to
%(\ref{ulul}). }
%%

%\footnote{S: I replaced in (1.6)-(1.8) $\sigma\to\delta$ to be consistent with (1.18) and (1.19)}%, for $\, t\to 0+,$
%If $1<p<p^*< \infty$ and $k\in {\mathbb N},$ then, for $\, t\to 0+,$
%  \begin{equation}\label{ulul}
%\omega_k(f,t)_{L_{p^*}}\lesssim
%\left( \int_{0}^t \Big[ u^{-\delta}
%\omega_{k}(f,u)_{L_{{p}}}\Big]^{p^*} \frac{du}{u}\right)^{1/p^*}
%\quad  \mbox{as } \ \,t\to 0+,%\frac1{p^*}=\frac1p-\frac{\sigma}n, \ \ 0<\sigma < n/p.
%\end{equation}
\end{example}

Note that in the previous example, we did not use optimal Sobolev embeddings and thus did not obtain the sharp Ul'yanov inequality (\ref{ulya}).
 The optimal embeddings  require to use Lorentz spaces.

\begin{example} %\footnote{S: I added this example to illustrate our result}
%To illustrate the technique based on ,
%,  \, \alpha \in {\mathbb R},$ and
%$\, \gamma \ge 0.$

Let  $n\in \N, \, 1<p< \infty, \,   0 < \delta < n/p$,  $0< \beta,$ and $1 \le {r_0}\le r_1 \le \infty$.
 Take $\theta$ satisfying
  $\max\big\{\frac{\delta}{\delta+\beta}, \frac{p\delta}{n}\big\}<\theta<1$  and set
$$
X=L_{p,{r_0}}, Y=L_{\bar{p},{r_0}}, Z= L_{{p}^*,{r_1}}\quad\mbox{ with}\quad \frac{1}{{p}^*}=\frac{1}{p}-\frac{\delta }{n}, \;\frac{1}{\bar{p}}=\frac{1}{p}-\frac{\delta}{n\theta}.$$ Then, in light of (\ref{embHB--}),
one has $H^{\delta}_{{p},{r_0}} \hookrightarrow L_{p^*,{r_1}}$.
Thus,  the second embedding in (\ref{emb1}) holds with $A^\delta X=H^{\delta}_{p,r_0}$ and $\sigma=\delta$,
where $A$ is the Sobolev integral operator. %; see, for example, the representation theorem in \cite[Section~3.4]{bur}.

%Now choose $Y=L_{\bar{p},{r_0}}$ such that (see  \cite[Theorem~5.3.1]{belo} for the interpolation)

%$H^{\delta+\beta}L_{{p},{r_0}} \hookrightarrow L_{\bar{p},{r_0}}$,
%$
%\frac{1}{\bar{p}}=\frac{1}{p}-\frac{\delta+\beta}{n}
%$
%and
%$ L_{p^*,{r_1}}
%(\Omega)
% = (L_{{p},{r_0}}(\Omega),L_{\bar{p},{r_0}}
% (\Omega)
% )_{\theta,{r_1}},\qquad
%\quad \frac{1}{p^*}=\frac{1-\theta}{{p}}+\frac{\theta}{{\bar{p}}}.$
Since
$$H^{\delta+\beta}_{{p},{r_0}} \hookrightarrow  % H^{\beta}_{\bar{p},{r_0}}
 L_{\bar{p},{r_0}},
 $$
 which holds by
 \cite[Theorem 1.6 (i)]{see} (note that $\delta+\beta>\frac{\delta}{\theta}=n\big(\frac{1}{p}-\frac{1}{\bar{p}}\big)$),
and
$$
%\quad\mbox{and}\quad
L_{p^*,{r_1}}
 = (L_{{p},{r_0}},L_{\bar{p},{r_0}})_{\theta,{r_1}}\
 \quad\mbox{with}\quad
 \frac{1}{p^*}=\frac{1-\theta}{{p}}+\frac{\theta}{{\bar{p}}}$$
 (see  \cite[Theorem~5.3.1]{belo}),
  we derive that
  (\ref{emb1}) and (\ref{emb2}) hold
with
 %$Y=L_{\bar{p},{r_0}},$
 $\sigma=\delta$, $\tau=\beta$, and
 the Banach lattice $F_0$ defined as the set of all functions $g\in \mathcal{M}(0,\infty)$
 such that
 $\|g\|_{F_0}=\|u^{-\theta-1/r_1} g(u)\|_{r_1, (0, \infty)}$.
%\left(\int_0^\infty  \big({u^{-\theta }{ |g(u)|}}\big)^{r_1} \frac{du}{u}\right)^{1/{r_1}}$.

%Hence, (\ref{emb1}) and (\ref{emb2}) hold with $\sigma=\delta$, $\tau=\beta$, and
%Then it is easy to check that $\theta=\delta/(\delta+\beta)$.
%$$
%\frac{1}{p}-\frac{\delta}{n}=\frac{1-\theta}{{p}}+\frac{\theta}{{p}}
%-
%\frac{\theta(\delta+\beta)}{n}
%$$
%gives $\theta=\delta/(\delta+\beta)$.
Finally, Theorem \ref{emb-ul-holm} (A) with
 %$\|g\|_{F_0}=\left(\int_0^\infty  \big({u^{-\theta }{ |g(u)|}}\big)^{r_1} \frac{du}{u}\right)^{1/{r_1}}$ and
$\phi_{0}(t) \approx t^{1/(1-\theta)} =t^{\frac{\beta+\delta}{\beta}}$
implies
\begin{multline*}
%\begin{split}
%&
\omega_\beta(f,t^{1/\beta})_{L_{p^*,{r_1}}}
\approx K_0(f,t; L_{p^*,{r_1}},H^\beta L_{p^*,{r_1}})
\\%&\lesssim
\Big(
\int_0^{t^{\frac{\beta+\delta}{\beta}}}  u^{-\theta {r_1}-1}
\omega_{\beta+\delta}(f,u^{1/{(\beta+\delta)}})_{L_{{p},{r_0}}}^{r_1}
du\Big)^{1/{r_1}}
%\\&
+%\,\,
\omega_{\beta+\delta}(f,t^{1/{(\beta+\delta)}})_{L_{{p},{r_0}}}
\Big(\int_{t^{\frac{\beta+\delta}{\beta}}}^\infty u^{-\theta {r_1}-1}du\Big)^{1/{r_1}}
\\%&\qquad\qquad\qquad\qquad\qquad\qquad\qquad\qquad
\approx
\Big(
\int_0^{t^{\frac{\beta+\delta}{\beta}}}  u^{-\theta {r_1}-1}
\omega_{\beta+\delta}(f,u^{1/{(\beta+\delta)}})_{L_{{p},{r_0}}}^{r_1}
du\Big)^{1/{r_1}},
%\left(\int_\mathbb{R}  s^{-\theta p-1} |g|^p ds\right)^{1/p}
%K(f, t^{\frac{\beta+\delta}{\beta}}; X,H^{\sigma+\beta}X) \| \chi_{(\phi(t),\infty)}(s)\|_F,
%\end{split}
\end{multline*}
with the usual modifications for $r_1=\infty.$
The latter is equivalent to the sharp Ul'yanov inequality %Consequently,
 \quad %which implies that
$%\begin{equation}\label{1.2+}
\omega_\beta (f,t)_{L_{p^*,{r_1}}} \lesssim \Big(
\int_{0}^{t}
[u^{-\delta}  \, \omega_{\beta +\delta}(f,u)_{L_{{p},{r_0}}}]^{r_1} \frac{du}{u}\Big)^{1/{r_1}}$ {as} $t \to 0+
$
for $f\in L_{{p},{r_0}}$; see (\ref{ulya}). %  which is the sharp Ul'yanov inequality given by Proposition 1.3.
  \end{example}
 %\vskip 0.4cm

%{\bf Amiran, let us think  on a  similar example for general Orlicz spaces?}

%{\bf Also,
%in [W.Trebels,U.Westphal/Journal of Approximation Theory 160(2009) 154-170] Walter proved $$%\begin{equation}\label{1.2+}
%\omega_\beta (f,t)_{L_p(\log L)^\lambda} \lesssim \Big(
%\int_{0}^{t}
%[(1-\log u)^{\lambda}  \, \omega_{\beta}(f,u)_{L_p}] \frac{du}{u}\Big),\qquad \lambda>0,\quad 1<p<\infty.
%$$
%An extension and the corresponding remark should be given in the last section.}

%\footnote{S: I deleted the subsection "Basic inequalities for moduli of smoothness in r.i. spaces" since
%we did not know how to show
%\[(X,W^{k+m}X)_{\frac{k}{k+m},1}\hookrightarrow (X,W^{k+m}X)_{\left[\frac{k}{k+m}\right]}=W^kX,\]
%where  $(X,W^{k+m}X)_{\left[\frac{k}{k+m}\right]}$ is the complex interpolation}

\vskip 0.5cm

\section{The Ul'yanov inequality between  weighted Lorentz spaces}\label{section5}
%\vskip 0.3cm
%\subsection{Sharp Ulyanov inequality between $\Lambda_r(w)$, $\Gamma_r(w)$, and $S_r(w)$.}
\subsection{Definitions and preliminaries}
The following definition is motivated by the known result on the equivalence between
 the classical Lorentz space norm and the one involving $f^{**}(t)-f^{*}(t)$, namely,% one has
% definition  of the classical Lorentz spaces  given by
$$
\|f\|_{L_{{p},r}}\approx \left(\int_0^{\infty} \Big(t^{1/{p}-1/r}\big(f^{**}(t)-f^{*}(t)\big)\Big)^r dt
\right)^{1/r}, \qquad
1<  p, r<\infty,
$$
where $f^{**}(t)=\frac{1}{t}\int_0^t f^*(s)\,ds$ provided that $f^{**}(\infty)=0$, see \cite[Proposition~7.12, p.~384]{besh}.
%Here and in what follows, $\Omega=\R^n$ and $|\Omega|=\infty$ or $\Omega=\R^n$ and $|\Omega|=1.$

Let  $X$  be an~r.i. space over  $(\mathbb{R}^n,\lambda_n)$
 and let $w$ be a \textit{weight}, that is, a~nonnegative measurable function on $(0,\infty)$. We define the function gaged cone
 %\footnote{ According to our definition, it is a function class but, as usual for the  reader's convenience, we will call it a function space.}
$$
S_X(w)(\mathbb{R}^n,\lambda_n):=\{f \in \M(\mathbb{R}^n,\lambda_n):\ f^*(\infty)=0,\ \|f\|_{S_X(w)}:=\|(f^{**}-f^*)w\|_{\bx}<\infty\},
$$
where $\bx$ is a representation space of $X$.
%{\bf Question 1. Amiran, add info about this space!}

%In the case when $\bx=L_r(\omega)$,
We will also need weighted Lorentz spaces
%$\Lambda_r(w)$ and $\Gamma_r(w)$, $1<r<\infty$,
 defined as follows (cf., e.g., \cite{cgmp}):
If $1\le r<\infty,$ we put
%\begin{align*}
$$\Lambda_r(w)(\mathbb{R}^n,\lambda_n):=\Big\{ f\in \M(\mathbb{R}^n,\lambda_n):\,
\|f\|_{\Lambda_r(w)}:=\Big(\int_{0}^{\infty}\big(f^{\ast}(s)\big)^{r}w(s)\,ds\Big)^{1/r}<\infty
\Big\},$$
$$
\Gamma_r(w)(\mathbb{R}^n,\lambda_n):=\Big\{ f\in \M(\mathbb{R}^n,\lambda_n):
\,  \|f\|_{\Gamma_r(w)}:=\Big(\int_{0}^{\infty}\big(f^{\ast\ast}(s)\big)^{r}w(s)\,ds\Big)^{1/r}<\infty
\Big\},
$$
\begin{multline*}
\quad S_r(w)(\mathbb{R}^n,\lambda_n):=
\Big\{ f\in \M(\mathbb{R}^n,\lambda_n):\ f^*(\infty)=0,
\\
  \|f\|_{S_r(w)}:=\Big(\int_{0}^{\infty}\big(f^{\ast\ast }(s)-f^{\ast}(s)\big)^{r}w(s)\,ds\Big)^{1/r}<\infty\Big\}.
\end{multline*}
%The space $S_r(w)$ was investigate in \cite{cgmp}.
We will use the following conditions on weights:
\begin{enumerate}
\item[$\bullet$]\quad
  $w\in B_r$ (i.e., $w$ satisfies the $B_r$ condition) if there is $c>0$ such that
   \[ t^r\int_t^\infty s^{-r} w(s)\,ds\le c\int_0^t w(s)\,ds  \quad \text{for every } \quad t>0;
\]
\item[$\bullet$]\quad
$w\in B_r^*$ (i.e., $w$ satisfies the $B_r^*$ condition)
 %$\omega$ satisfies $B_{r}^*$ condition
if there is $c>0$ such that
\[ t^{r}\int_0^t  s^{-r}  w(s)\,ds\le c\int_0^t w(s)\,ds  \quad \text{for every } \quad t>0;
\]
\item[$\bullet$]\quad
$w\in B_\infty^*$ (i.e., $w$ satisfies the $B_\infty^*$ condition)
 %$\omega$ satisfies $B^*_\infty$ condition
if there is $c>0$ such that
   \[ \int_0^t \log\frac ts\, \, w(s)\,ds\le c\int_0^t w(s)\,ds  \quad \text{for every } \quad t>0.
\]
\end{enumerate}

In general, $ \Lambda_r(w)(\mathbb{R}^n,\lambda_n)$  and $S_r(w)(\mathbb{R}^n,\lambda_n)$ are
not r.i. spaces, they are
not even linear. On the other hand, ${\Gamma_r(w)(\mathbb{R}^n,\lambda_n)}$ is always an r.i. space for  $1\le r<\infty$ and
in  this case
 the representation space of ${\Gamma_r(w)(\mathbb{R}^n,\lambda_n)}$  is
$\Gamma_r(w)((0,\infty), dt)$.

If $ \Lambda_r(w)(\mathbb{R}^n,\lambda_n)$ is an r.i.~space (e.g., if  $1<r<\infty$ and $w\in B_r$,
see Lemma~\ref{newlemma} below),
 then the representation space of  $\Lambda_r(w)(\mathbb{R}^n,\lambda_n)$ is the space  $\Lambda_r(w)((0,\infty), dt)$.

Similarly, if
$S_r(w)(\mathbb{R}^n,\lambda_n)$ is an r.i.~space (e.g.,  if  $1<r<\infty$ and %\footnote{Recall that $w\in RB_r$ if and only if $w(1/t)t^{r-2}\in B_r$.}
$w\in RB_r$, i.e.   $w(1/t)t^{r-2}\in B_r$;
see \cite[Theorem~3.3]{cgmp}),
%see Lemma~\ref{???} below),
 then the representation space of  $S_r(w)(\mathbb{R}^n,\lambda_n)$  is the space $S_r(w) ((0,\infty), dt)$.
Moreover, if $w\in RB_r$, $1<r<\infty$, then
$S_r(w)(\mathbb{R}^n,\lambda_n)$ coincides with ${\Gamma_r(w)(\mathbb{R}^n,\lambda_n)}$.

% coincides with ${\Gamma_r(w)(\mathfrak{R},\mu)}$ under the condition  $w\in RB_r$, $1<r<\infty$ (see \cite[Theorem 3.3]{cgmp} and  ... below).

%Similarly, if $1\le r<\infty $, then the representation spaces of %${\Gamma_r(w)(\mathfrak{R},\mu)}$  and
% $S_r(w)(\mathfrak{R},\mu)$
 %are the spaces $\Gamma_r(w)((0,\infty), dt)$ and
 %Similarly, if $1\le r<\infty $, then the representation space of $S_r(w)(\mathfrak{R},\mu)$ is the space
%is $S_r(w) %(\mathfrak{R},\mu)\Gamma_r(w)
%((0,\infty), dt)$. %, respectively.
%Moreover,
%$S_r(w)(\mathfrak{R},\mu)$ coincides with ${\Gamma_r(w)(\mathfrak{R},\mu)}$ under the condition  $w\in RB_r$, $1<r<\infty$ (see \cite[Theorem %3.3]{cgmp} and  ... below).

%{\bf Amiran, shall we say here that $S_r(w)(\mathfrak{R},\mu)$ is an r.i. space provided that   $1<r<\infty$ and $w\in RB_r$, see
%\cite[Theorem 3.3]{cgmp}  in your paper with MJ Carro ?}
 %Note that if $ \Lambda_r(w)(\mathfrak{R},\mu)$ is an r.i. space (e.g., if  $1<r<\infty$ and $w\in B_r$,
%see Lemma~\ref{newlemma} mentioned below),
 %then  $\overline{\Lambda_r(w)(\mathfrak{R},\mu)}=\Lambda_r(w)((0,\infty), dt)$, i.e., $\Lambda_r(w)((0,\infty), dt)$ is
 %the representation space of  $\Lambda_r(w)(\mathfrak{R},\mu)$.
 %Similarly, if $1\le r<\infty $, then
%\overline{\Gamma_r(w)(\mathfrak{R},\mu)}=\Gamma_r(w)((0,\infty), dt)$.

The \textit{dilation operator} $E_t$, $t\in (0,\infty)$, is defined  on $\Mpl(0,\infty)$ by
$$
  (E_tf)(s):= f\left( ts\right)\quad \mbox{for all \ } s\in (0,\infty).
  $$
Given an r.i.  space $X$ and $t\in (0,\infty)$, the operator $E_t$ is bounded from
$\overline{X}$ to $\overline{X}$ (cf. \cite[p.~148]{besh}). If $h_X$ denotes the {\it dilation function}, i.e.,
$$
h_X(t):=\|E_{1/t}\|_{{\overline X}\rightarrow{\overline X}} \quad  \mbox{for all \ } t\in (0,\infty),
$$
then the \textit{lower and upper Boyd index} of the space $X$ is given by
$$
  \underline{\alpha}_{X}:=\lim_{t\to0+}\frac{\log h_{X}(t)}{\log t}
\qquad\textup{and}\qquad
 \overline{\alpha}_{X}:=\lim_{t\to\infty}\frac{\log h_{X}(t)}{\log t},
$$
respectively. The Boyd indices satisfy (cf. \cite[p.~149]{besh})
$$
  0\leq \underline{\alpha}_{X}\leq \overline{\alpha}_{X}\leq 1.
$$
%Note that the Boyd indices $\overline{\alpha}_X$ and $\underline{\alpha}_X$  defined in \cite[p. %149]{besh} are such that  $i(X) \overline{\alpha}_X=1$ and $I(X)\underline{\alpha}_X=1.$

The \textit{Hardy averaging operator} $P$ and its \textit{dual} $Q$
 are defined on $\Mpl(0,\infty)$, for each $t\in (0,\infty)$, by
%at a~locally integrable function $h$ on $(0,\infty)$ and for $t\in(0,\infty)$ by
%\vskip-0,5cm
$$
(Pf)(t):=\frac{1}{t}\int_0^tf(s)\,ds\ \ \mbox{and} \ \  (Qf)(t):=\int_t^{\infty}\frac{f(s)}{s}\,ds, %\qquad t\in (0,\infty).
$$
respectively. Recall that (cf.~\cite[p.~150]{besh}) given an r.i.  space $X$, the operator $P$ is bounded on $\bx$
if and only if $\overline{\alpha}_{X}<1$, while the operator $Q$ is bounded on $\bx$ if and only if $0<\underline{\alpha}_{X}$.

We will need the following result, which is partially known but the present formulation  seems to be new. %\footnote{S: the text was changed}
%We will need the following result (which is basically known).

\begin{lemma}\label{newlemma} Let $w$ be a weight, $1<r<\infty$, and $X:=\Lambda_r(w)(\mathbb{R}^n, \lambda_n)$.
%, where $\lambda_n$ stands for the n-dimensional Lebesgue measure on $\mathbb{R}^n$.

\noindent
1. The following conditions are equivalent:
\begin{enumerate}
\item[$(a)$]
$w\in B_r$,
\item[$(b)$]
 %$ \Lambda_r(w)$
$X$ is an r.i. space,
\item[$(c)$]
the operator $P$ is bounded on $\bx$, %$\Lambda_r(w)$,
\item[$(d)$]
$\overline{\alpha}_{X}<1$,
\item[$(e)$]
$X=\Gamma_r(w)(\mathbb{R}^n, \lambda_n)$.
\end{enumerate}

\noindent
2. If $w\in B_r$ and $\eta\in ( 0,1)$, then
the following conditions are equivalent:
\begin{enumerate}
\item[$(a)$]
$w\in B_{q}^*$ with $q=\eta r$,
\item[$(b)$]
the operator
\vskip-0,3cm
$$(Q_\eta f)(t)={t^{-\eta}}\int_t^\infty s^\eta f(s)\frac{ds}{s},\qquad t\in ( 0,\infty),
$$
\vskip-0,1cm
\hskip-0,6cm
 is bounded on $\bx$,%$\Lambda_r(w)$,
\item[$(c)$]
$\eta<\underline{\alpha}_{X}$.
\end{enumerate}

% to the boundedness of the operator
% $$Q_lf(x)={x^{-l}}\int_x^\infty t^lf(t)\frac{dt}{t},\qquad l\in ( 0,1)$$ %is bounded   in  $\Lambda_r(w)$.

\noindent
3. If $w\in B_r$, then
the following conditions are equivalent:
\begin{enumerate}
\item[$(a)$]
$w\in B_\infty^*$,
\item[$(b)$]
the operator $Q$
 is bounded on $\bx$, %$$\Lambda_r(w)$,
\item[$(c)$]
$0<\underline{\alpha}_{X}$.
\end{enumerate}

%3. If $w\in B_r$, then
%the condition $w\in B_\infty^*$ is equivalent to the boundedness of the operator
%$Q$ is bounded in  $\Lambda_r(w)$.
\end{lemma}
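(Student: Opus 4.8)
The plan is to transfer every assertion to the cone $\M^+(0,\infty;\downarrow)\cap L^r(w)$ of non-increasing functions in $L^r(w):=\big\{g:\ \|g\|_{L^r(w)}:=\big(\int_0^\infty|g|^r w\big)^{1/r}<\infty\big\}$, where the pertinent inequalities are classical, and then to read the Boyd indices of $X:=\Lambda_r(w)(\R^n,\lambda_n)$ off the dilation function $h_X$; write $W(t):=\int_0^t w$. Two elementary facts will be used repeatedly. First, once $X$ is known to be an r.i.\ space its representation space is $\Lambda_r(w)\big((0,\infty),dt\big)$, so $\|g\|_{\bx}=\|g^*\|_{L^r(w)}$; hence $\|g\|_{\bx}=\|g\|_{L^r(w)}$ when $g\in\M^+(0,\infty;\downarrow)$, and since each of $P$, $Q$, $Q_\eta$ maps $\M^+(0,\infty;\downarrow)$ into itself, boundedness of any one of these operators on $\bx$ forces its boundedness on the decreasing cone (test on $f^*$). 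Second, a layer-cake decomposition of $g^*$ combined with the substitution $s=tu$ gives $h_X(t)^r\approx\sup_{a>0}W(ta)/W(a)$, so that $\underline{\alpha}_X$ equals $1/r$ times the lower Matuszewska--Orlicz index of $W$; in particular $h_X(t)\le C_\varepsilon\,t^{\underline{\alpha}_X-\varepsilon}$ for $0<t\le1$ and every $\varepsilon\in(0,\underline{\alpha}_X)$.

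\emph{Part~1.} The engine is the Ari\~{n}o--Muckenhoupt theorem: $P$ is bounded on the cone of non-increasing functions in $L^r(w)$ if and only if $w\in B_r$ (see \cite{cgmp} and the references therein). Since $f^{**}=P(f^*)$ and $f^*\le f^{**}$ always, this says precisely that $w\in B_r$ is equivalent to $\|f\|_{\Gamma_r(w)}\lesssim\|f\|_{\Lambda_r(w)}$ for all $f$, i.e.\ to $X=\Gamma_r(w)(\R^n,\lambda_n)$; this is $(a)\Leftrightarrow(e)$. Next, $(e)\Rightarrow(b)$ since $\Gamma_r(w)$ is always an r.i.\ space, and $(b)\Rightarrow(a)$ is Sawyer's normability criterion (for $1<r<\infty$, an r.i.\ Banach-function-space structure on $\Lambda_r(w)$ forces $w\in B_r$; see again \cite{cgmp}). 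The equivalence $(c)\Leftrightarrow(d)$ is the criterion ``$P$ bounded on $\bx\Leftrightarrow\overline{\alpha}_X<1$'' recalled in Section~\ref{section2}. Finally, $(a)\Rightarrow(c)$: from $\int_0^t|f|\le\int_0^t f^*$ one gets $(Pf)^*\le P(f^*)$ pointwise, and $P(f^*)$ is non-increasing, so by Ari\~{n}o--Muckenhoupt $\|(Pf)^*\|_{L^r(w)}\le\|P(f^*)\|_{L^r(w)}\lesssim\|f^*\|_{L^r(w)}$; and $(c)\Rightarrow(a)$ follows by restricting $P$ to the decreasing cone, on which $Pg$ stays non-increasing. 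This closes the cycle.

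\emph{Parts~2 and~3.} Under the standing hypothesis $w\in B_r$, Part~1 makes $X$ an r.i.\ space, so $\bx$ and $\underline{\alpha}_X$ are available. The substitution $s=ut$ gives $Q_\eta f(t)=\int_1^\infty u^{\eta}(E_uf)(t)\,\frac{du}{u}$ and $Qf(t)=\int_1^\infty(E_uf)(t)\,\frac{du}{u}$; combining this with $\|E_u\|_{\bx\to\bx}=h_X(1/u)\le C_\varepsilon\,u^{-(\underline{\alpha}_X-\varepsilon)}$ for $u\ge1$ and Minkowski's integral inequality yields $(c)\Rightarrow(b)$ in both parts, since $\int_1^\infty u^{\eta-\underline{\alpha}_X+\varepsilon}\,\frac{du}{u}$ (part~2) and $\int_1^\infty u^{-\underline{\alpha}_X+\varepsilon}\,\frac{du}{u}$ (part~3) converge for $\varepsilon$ small enough. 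For $(b)\Rightarrow(a)$ one tests on $f=\chi_{(0,a)}\in\bx$: from $Q_\eta\chi_{(0,a)}(t)\approx(a/t)^{\eta}$ on $(0,a/2)$ and $Q\chi_{(0,a)}(t)=\log^+(a/t)$, boundedness on $\bx$ forces $a^{\eta r}\int_0^a s^{-\eta r}w\lesssim W(a)$, i.e.\ $w\in B_{\eta r}^*$, in part~2, and $\int_0^a(\log(a/s))^r w\lesssim W(a)$ in part~3, which in particular gives $\int_0^a\log(a/s)\,w\lesssim W(a)$, i.e.\ $w\in B_\infty^*$. Finally $(a)\Rightarrow(c)$: the $B^*$-conditions are self-improving, so $w\in B_{\eta r}^*$ yields $W(b)/W(a)\lesssim(b/a)^{\eta r+\varepsilon}$ for $0<b\le a$ and some $\varepsilon>0$ (using $\int_0^a s^{-q}w\ge b^{-q}W(b)$ with $q=\eta r+\varepsilon$), hence $\sup_{a>0}W(ta)/W(a)\lesssim t^{\eta r+\varepsilon}$ as $t\to0+$ and $\underline{\alpha}_X\ge\eta+\varepsilon/r>\eta$; likewise $w\in B_\infty^*$ improves to $w\in B_q^*$ for some small $q>0$, giving $\underline{\alpha}_X>0$. (These cone inequalities and self-improvement facts are precisely the ones assembled in \cite{cgmp}.)

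The steps that are routine but call for care are the identification of the representation space of $\Lambda_r(w)$ --- legitimate only after $w\in B_r$ has been secured, which fixes the order of the implications above --- the verification that $P$, $Q$, $Q_\eta$ preserve monotonicity, and the bookkeeping of which classical cone inequality (Ari\~{n}o--Muckenhoupt for $B_r$; the $B_q^*$- and $B_\infty^*$-criteria for $Q_\eta$ and $Q$; Sawyer's criterion for normability) is invoked at each place. The genuinely non-mechanical ingredient, and the main obstacle, is the dictionary between the $B$-/$B^*$-conditions on $w$ and the Boyd indices of $X$ via the dilation function, in particular extracting the \emph{strict} inequality $\eta<\underline{\alpha}_X$ in part~2 from the self-improvement of $B_{\eta r}^*$.
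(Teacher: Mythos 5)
Your proof is correct, and it takes a genuinely different route from the paper's in Parts~2 and~3, while Part~1 is essentially the same (both rest on Sawyer's normability criterion, Ari\~no--Muckenhoupt for the Hardy operator, and the Boyd-index characterization from Bennett--Sharpley; your derivation of $(a)\Rightarrow(c)$ via $(Pf)^*\le P(f^*)$ is a minor variant of citing \cite{armu} directly). The paper's main content in Parts~2 and~3 is a transfer argument from the decreasing cone to the full space: it cites Neugebauer \cite{ne} for the equivalence of $w\in B_{\eta r}^*$ with boundedness of $Q_\eta$ on $L_r^\downarrow(w)$, and then proves that boundedness on the cone upgrades to boundedness on $\Lambda_r(w)$ by combining the Fubini identity $(\ref{444})$, the Hardy--Littlewood rearrangement inequality, the fact that $Q_\eta f$ is non-increasing, and the $B_r$ hypothesis to control $P$. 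The equivalence with $(c)$ is then left as a standard citation. You bypass the cone-transfer step entirely and instead close the full cycle $(c)\Rightarrow(b)\Rightarrow(a)\Rightarrow(c)$: $(c)\Rightarrow(b)$ by writing $Q_\eta$ as a dilation average $\int_1^\infty u^{\eta-1}E_u\,du$ and applying Minkowski's integral inequality with the Boyd-index bound $h_X(1/u)\lesssim u^{-(\underline{\alpha}_X-\varepsilon)}$; $(b)\Rightarrow(a)$ by testing on $\chi_{(0,a)}$; and $(a)\Rightarrow(c)$ via the dictionary $h_X(t)^r=\sup_a W(ta)/W(a)$ and the self-improvement of the $B^*$-conditions. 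What the paper's route buys is that it stays close to the cone where Neugebauer's criterion is stated, and needs no dilation estimates or self-improvement. What your route buys is that it is more self-contained in the sense that it re-derives the Boyd-index criterion for $Q_\eta$ rather than citing it, and makes the role of the indices $\underline{\alpha}_X$ completely explicit. The one place to tighten is the attribution of the self-improvement $B_{\eta r}^*\Rightarrow B_{\eta r+\varepsilon}^*$ (and likewise $B_\infty^*\Rightarrow B_q^*$ for some $q>0$) to \cite{cgmp}; that reference is not an obvious home for it, and since this is the crux of $(a)\Rightarrow(c)$ it deserves either a precise citation or the short argument: with $G(t)=\int_0^t s^{-q}w$ and $H(t)=\tfrac1q\big(t^qG(t)-W(t)\big)$ one has $tH'(t)\ge\tfrac{qc}{c-1}H(t)$, hence $H(b)/H(a)\le(b/a)^{qc/(c-1)}$, and $H\approx W$ because the standing hypothesis $w\in B_r$ makes $W$ doubling; a dyadic summation then yields $B_{q'}^*$ for any $q'<qc/(c-1)$.
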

\begin{proof}
Part 1  is known; in more detail, for
$(a) \Leftrightarrow (b)$ see \cite[Theorem~4]{sa},
for $(a) \Leftrightarrow (c)$ see \cite[Theorem~1.7]{armu},
for $(c) \Leftrightarrow (d)$
see  \cite[p. 150]{besh}, and
 $(c) \Leftrightarrow (e)$ is clear.

The proof of part 2 easily follows from % basically known and based on
 the paper \cite[Theorem~3.1]{ne}. %Indeed, the condition $w\in B_\infty^*$
 The condition
 $w\in B_{\eta r}^*$ is equivalent (cf. \cite[Theorem 3.1]{ne}) to the fact that the operator $Q_\eta$ is bounded in
 $$L_r^\downarrow(w):=
\Big\{f\in \M^+(0,\infty;\downarrow): \|f\|_{L_r^\downarrow(w)}:=\Big(\int_0^\infty |f(t)|^rw(t)\,dt\Big)^{1/r}<\infty\Big\}.$$
%\{f\in L_r(w): f\in \M^+(0,\infty;\downarrow)\}$.
It remains to show that the operator $Q_\eta$ is bounded on
 $L_r^\downarrow(w)$ if and only if it is bounded in
 $\Lambda_r(w)$. Part ``if" is clear. To prove the part ``only if",
we first
note that, by Fubini's theorem and  the Hardy--Littlewood rearrangement inequality (see \cite[p.~44]{besh}),
 \begin{equation}\label{444}
\int_0^t (Q_\eta f)(x) \,dx=\frac{1}{1-\eta}\int_0^\infty \min\big(1,\frac{t}{u}\big)^{1-\eta} f(u)\,du
\end{equation}
\vskip-0,2cm
$$\le
 \frac{1}{1-\eta}\int_0^\infty \min\big(1,\frac{t}{u}\big)^{1-\eta} f^*(u)\,du=
 \int_0^t (Q_\eta f^*)(x)\,dx.
$$
 %\end{equation}
Therefore, the fact that $Q_\eta f \in  \M^+(0,\infty;\downarrow),$ the $B_r$ condition, the first part of this lemma,
 inequality (\ref{444}), and the boundedness of  $Q_\eta$ on $L_r^\downarrow(w)$ imply, for any $f\in \Mpl(0,\infty),$ %non-negative $f$,
  \begin{align*}
 &\Big(\int_{0}^{\infty}\big((Q_\eta f)^*(s)\big)^{r}w(s)\,ds\Big)^{1/r}
 =
 \Big(\int_{0}^{\infty}\big((Q_\eta f)(s)\big)^{r}w(s)\,ds\Big)^{1/r}
 \\&
  \approx
 \Big(\int_{0}^{\infty}\Big(\frac1s\int_0^s (Q_\eta f)(u)\,du\Big)^{r}w(s)\,ds\Big)^{1/r}
 \le
 \Big(\int_{0}^{\infty}\Big(\frac1s\int_0^s (Q_\eta f^*)(u)\,du\Big)^{r}w(s)\,ds\Big)^{1/r}
\\&
  \lesssim
 \Big(\int_{0}^{\infty}\big( (Q_\eta f^*)(s)\big)^{r}w(s)\,ds\Big)^{1/r}
 \lesssim
 \Big(\int_{0}^{\infty}\big(f^*(s)\big)^{r}w(s)\,ds\Big)^{1/r}.
   \end{align*}
 %   \footnote{S: I changed $\le$ to $\lesssim$}
 %where in the last estimate we have used  that $w\in B_{q}^*$, $q=lr$.

The proof of part 3 is similar, one makes use of the fact that
the condition
 $w\in~B_{\infty}^*$ is equivalent to the boundedness of the operator $Q$ on the space
 $L_r^\downarrow(w)$ (cf. \cite[Theorem~3.3]{ne}).
\end{proof}

{ In the rest of this section we work with spaces over $(\mathbb{R}^n, \lambda_n)$ and sometimes we omit the symbol
$(\mathbb{R}^n, \lambda_n)$ from the notation of spaces in question.}

% The latter is equivalent to the fact that
%$\overline{\alpha}_{X}<1$ (see, e.g.,\cite[page ??]{besh}.
\begin{lemma} \label{lemma5_2} Let $1<r<\infty$, $w\in B_r$,  $\beta \in \mathbb{R}$, and let $v(t):=t^{\beta }$ for all $t\in (0,\infty).$
If $X:= \Lambda_r(w)(\mathbb{R}^n, \lambda_n),$ then
  $$S_X(v)(\mathbb{R}^n, \lambda_n) \hookrightarrow  S_{r}(wv^r)(\mathbb{R}^n, \lambda_n). $$
\end{lemma}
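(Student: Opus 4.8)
The plan is to pass to representation spaces, reduce the embedding to a single weighted integral inequality for the function $g:=(f^{**}-f^*)v$, and deduce that inequality from a monotonicity property of $g$ together with the $B_r$ condition on $w$. First I would record that, since $1<r<\infty$ and $w\in B_r$, Lemma~\ref{newlemma}(1) makes $X=\Lambda_r(w)(\mathbb{R}^n,\lambda_n)$ an r.i.\ space whose representation space is $\overline X=\Lambda_r(w)((0,\infty),dt)$, so $\|G\|_{\overline X}=\big(\int_0^\infty (G^*)^r w\big)^{1/r}$. Writing $h:=f^{**}-f^*$ and $g:=hv$ as functions on $(0,\infty)$, this gives $\|f\|_{S_X(v)}=\|g\|_{\overline X}=\big(\int_0^\infty (g^*)^r w\big)^{1/r}$, whereas $\|f\|_{S_r(wv^r)}=\big(\int_0^\infty g^r w\big)^{1/r}$ directly from the definition (here $v^r(t)=t^{r\beta}$). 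As both cones impose the same side condition $f^*(\infty)=0$, it then remains to prove $\int_0^\infty g^r w\lesssim\int_0^\infty (g^*)^r w$; I may assume the right-hand side is finite, and then axiom~(4) of Banach function spaces, applied in $\overline X$, gives $\int_0^u g^*<\infty$ for all $u>0$, so the maximal function $g^{**}(t)=\frac1t\int_0^t g^*$ is finite.

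The key structural fact is the identity $t\,h(t)=t f^{**}(t)-t f^*(t)=\int_0^t\big(f^*(s)-f^*(t)\big)\,ds$, which shows $t\mapsto t\,h(t)$ is non-decreasing since $f^*$ is non-increasing. I would use this to get a pointwise bound on $g$: for $t>0$ and $s\in[t,2t]$ one has $s\,h(s)\ge t\,h(t)$ and $s^\beta\ge 2^{\min(\beta,0)}t^\beta$, whence $s\,g(s)=s^\beta\big(s\,h(s)\big)\ge 2^{\min(\beta,0)}\,t\,g(t)$; averaging $g(s)\ge 2^{\min(\beta,0)}\,t\,g(t)/s$ over $s\in[t,2t]$ and using the Hardy--Littlewood inequality $\int_0^{2t}g\le\int_0^{2t}g^*$ yields $g(t)\lesssim g^{**}(2t)\le g^{**}(t)$, the last step because $g^{**}$ is non-increasing (the implied constant depending only on $\beta$). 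This paragraph is the one I expect to need the most care — chiefly in treating $\beta\ge 0$ and $\beta<0$ uniformly via the factor $2^{\min(\beta,0)}$ — although it is ultimately routine.

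To conclude, I would raise the pointwise bound to the $r$-th power, integrate against $w$, and invoke the norm equivalence $\Gamma_r(w)(\mathbb{R}^n,\lambda_n)=\Lambda_r(w)(\mathbb{R}^n,\lambda_n)$, which holds precisely because $w\in B_r$ (Lemma~\ref{newlemma}(1), $(a)\Leftrightarrow(e)$, applied to a function on $\mathbb{R}^n$ whose decreasing rearrangement equals $g^*$): this gives $\int_0^\infty g^r w\lesssim\int_0^\infty (g^{**})^r w\approx\int_0^\infty (g^*)^r w$, which is the required inequality and hence $\|f\|_{S_r(wv^r)}\lesssim\|f\|_{S_X(v)}$ for every $f\in S_X(v)$. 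The real obstacle here is conceptual rather than computational: one must recognize that the inequality has to go in the direction $\|g\|_{L^r(w)}\lesssim\|g^*\|_{L^r(w)}$ — false for arbitrary $g$ and $w$ — and that the monotonicity of $t\,h(t)$ is exactly what allows the $B_r$ condition to bridge the gap.
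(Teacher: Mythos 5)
Your proof is correct and follows essentially the same route as the paper: the same pointwise bound $g(t)\lesssim\frac1t\int_t^{2t}g\le (Pg^*)(t)=g^{**}(t)$ obtained from the monotonicity of $t\mapsto t\,(f^{**}(t)-f^*(t))$ (which the paper cites from the literature and you verify directly), followed by the $w\in B_r$ hypothesis via Lemma~\ref{newlemma}, Part 1. The only cosmetic difference is that you phrase the last step as the equivalence $\Lambda_r(w)=\Gamma_r(w)$ rather than as boundedness of the averaging operator $P$ on $\overline X$, which are the same statement in that lemma.
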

%\begin{lemma} \label{lemma5_2} Let $1<r<\infty$, $\alpha >0$ and $w\in B_r$, then
%  $$S_{\Lambda_r(w(t)t^{-\alpha r})} (\Omega) \subset  S_{r}(w(t)t^{-\alpha r})(\Omega). $$
%\end{lemma}
\begin{proof}
Let $\beta \in \mathbb{R}$, $1<r<\infty$, and $f\in \Mpl(\mathbb{R}^n, \lambda_n).$ Since
$\int_t^{2t} s^{\beta-1}\,ds \approx t^\beta$ {for all } $t>0,
$
and since
\begin{equation}\label{104}
\mbox{the function\ \  } t\mapsto t(f^{**}(t)-f^*(t)) \mbox{\ \ is non-decreasing on  } (0,\infty)
\end{equation}
(cf. \cite[Prop.~4.2]{cgo}), on putting
$$g(s):=(f^{**}(s)-f^*(s))s^\beta,\qquad s \in (0,\infty),$$ we obtain that
%\begin{align*}
$$
(f^{**}(t)-f^*(t))t^\beta \lesssim \frac1t\int_t^{2t} g(s)\,ds\le(Pg^*)(t) \quad \mbox{for all } t>0.
$$
Therefore,
%if $f\in Y:=S_X(v)(\mathbb{R}^n, \lambda_n)$, then
$$
\Big(\int_0^\infty (f^{**}(t)-f^*(t))^r(v(t))^rw(t)\,dt\Big)^{1/r}\lesssim \Big(\int_0^\infty ((Pg^*)(t))^rw(t)\,dt\Big)^{1/r}.
$$
Together with the condition  $w\in B_r$ and the first part of Lemma~\ref{newlemma} (recall that in our case
$\overline{X}=\Lambda_r(w)((0,\infty), dt)$), this implies that
$$
\|f\|_{ S_{r}(v^rw)} \lesssim \Big(\int_0^\infty (g^*(t))^r w(t)\,dt\Big)^{1/r} %=\|g^*\|_{\overline{X}}
=\|g\|_{\overline{X}}=\|f\|_{S_X(v)(\mathbb{R}^n, \lambda_n)},
$$
 %\footnote{S: I slightly changed the last formula}%where $Y:=S_X(v)(\mathbb{R}^n, \lambda_n)$,
the required result.
%\end{align*}
\end{proof}

{ In what follows, % the rest of this section we work with spaces over $(\mathbb{R}^n, \lambda_n)$ and sometimes we omit the symbol
%$(\mathbb{R}^n, \lambda_n)$ from the notation of spaces in question.}
%\footnote{S: changed {\it In the rest of this..} to In the rest of this..} Moreover,
given $\gamma \ge 0$ and $n\in \mathbb N$, we define the weight
$v_{\gamma, n}$ by
\begin{equation}\label{weight}
v_{\gamma, n}(t):=t\sp{-\frac{\gamma}{n}}\quad \mbox{ for all } t>0.
\end{equation}

\medskip

The next lemma represents a key step in the proof of Proposition~\ref{111} below. It was proved in
 \cite[Theorem~1.1]{gps} for $k=1$, the proof for $k \in \mathbb N$ is analogous.

\begin{lemma}\label{T:kfunct_sx}
If $k,n\in \mathbb N$ and %let the weight $v_{k,n}$ be given by \eqref{weight}.
 $X$ is an~r.i.~space,
satisfying $ 0< \underline{\alpha}_{X}\leq \overline{\alpha}_{X}< 1$,  then,
for all $t>0$ and $f\in X+S_X(v_{k,n})$,
\begin{align*}
K(f,t;&X,S_X(v_{k,n}))\\
%\label{Kfunc2}\\
&\approx
\|(f^*(s)-f^*(t))\chi_{(0,t\sp {\frac{n}{k}})}(s)
\|_{\bx}+t\|s^{-\frac kn}(f^{**}(s)-f^*(s))\chi_{(t\sp {\frac{n}{k}},\infty)}(s)\|_{\bx}\nonumber\\
&\approx
\|(f^{**}(s)-f^*(s))\chi_{(0,t\sp {\frac{n}{k}})}
(s)\|_{\bx}+t\|s^{-\frac kn}(f^{**}(s)-f^*(s))\chi_{(t\sp {\frac{n}{k}},\infty)}(s)\|_{\bx}.\nonumber
\end{align*}

\end{lemma}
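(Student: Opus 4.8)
The plan is to adapt the proof of \cite[Theorem~1.1]{gps}, where the case $k=1$ is established; passing to general $k\in\mathbb N$ is purely cosmetic, amounting only to replacing the weight $v_{1,n}(s)=s^{-1/n}$ by $v_{k,n}(s)=s^{-k/n}$ and the breakpoint $s=t^{n}$ by $s=t^{n/k}$, the structure of the argument being unchanged. Write $\lambda:=t^{n/k}$, so $t=\lambda^{k/n}$. The hypothesis $0<\underline{\alpha}_{X}\le\overline{\alpha}_{X}<1$ will be used only through the boundedness on $\bx$ of the Hardy operator $P$ (equivalent to $\overline{\alpha}_{X}<1$) and of its dual $Q$ (equivalent to $\underline{\alpha}_{X}>0$), see \cite[p.~150]{besh}, and of the dilations $E_{t}$; I would also use repeatedly the monotonicity of $s\mapsto s(f^{**}(s)-f^{*}(s))$ \cite[Prop.~4.2]{cgo}, and the fact that $\underline{\alpha}_{X}>0$ forces $g^{*}(\infty)=0$ for $g\in X$, hence $f^{*}(\infty)=0$ for every $f\in X+S_X(v_{k,n})$, and $h^{**}=Q(h^{**}-h^{*})$ for $h\in S_X(v_{k,n})$ (since $h^{*}(\infty)=0$ implies $h^{**}(\infty)=0$). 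The equivalence of the two right-hand sides of the claimed formula is then easy: their $(\lambda,\infty)$-summands agree, while on $(0,\lambda)$ one has $f^{**}(s)-f^{*}(s)\le P\big[(f^{*}(\cdot)-f^{*}(\lambda))\chi_{(0,\lambda)}\big](s)$ (so boundedness of $P$ gives one direction), and iterating $f^{*}(s)-f^{*}(2s)\le 2(f^{**}(2s)-f^{*}(2s))$ along dyadic scales up to $\lambda$ and using boundedness of $Q$ and of $E_{t}$ gives the other.

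For the upper bound I would fix $t>0$ and truncate $f$ at the height $f^{*}(\lambda)$: set $g:=(|f|-f^{*}(\lambda))_{+}\sgn f$ and $h:=f-g$, so $g^{*}(s)=(f^{*}(s)-f^{*}(\lambda))_{+}$ and $h^{*}(s)=\min\{f^{*}(s),f^{*}(\lambda)\}$; thus $\|g\|_{X}=\|g^{*}\|_{\bx}=\|(f^{*}(s)-f^{*}(\lambda))\chi_{(0,\lambda)}(s)\|_{\bx}$. Since $h^{*}$ is constant on $(0,\lambda)$, $h^{**}(s)-h^{*}(s)=0$ there; for $s>\lambda$ the identity $\int_0^{\lambda}h^{*}=\lambda f^{*}(\lambda)\le\int_0^{\lambda}f^{*}$ gives $h^{**}(s)-h^{*}(s)\le f^{**}(s)-f^{*}(s)$, whence $\|h\|_{S_X(v_{k,n})}\le\|s^{-k/n}(f^{**}(s)-f^{*}(s))\chi_{(\lambda,\infty)}(s)\|_{\bx}$. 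As $h^{*}(\infty)=0$ this decomposition is admissible, so $K(f,t;X,S_X(v_{k,n}))\le\|g\|_{X}+t\|h\|_{S_X(v_{k,n})}$ is bounded by the right-hand side.

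The hard part will be the lower bound. Taking an arbitrary admissible decomposition $f=g+h$, I would combine the subadditivity $f^{**}(s)\le g^{**}(s)+h^{**}(s)$ (i.e.\ $\int_0^s(g+h)^{*}\le\int_0^sg^{*}+\int_0^sh^{*}$) with the pointwise bound $f^{*}(s)\ge g^{*}(2s)-h^{*}(s)$ to obtain $f^{**}(s)-f^{*}(s)\lesssim(g^{**}(2s)-g^{*}(2s))+h^{**}(s)$. The $g$-term is absorbed into $\|g\|_{X}$ — on $(0,\lambda)$ via boundedness of $P$ and $E_{2}$, and on $(\lambda,\infty)$ after using that there $ts^{-k/n}\le t\lambda^{-k/n}=1$. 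The $h$-term must be absorbed into $t\|h\|_{S_X(v_{k,n})}=\lambda^{k/n}\|(h^{**}-h^{*})v_{k,n}\|_{\bx}$; here one substitutes $h^{**}=Q(h^{**}-h^{*})$ and uses the monotonicity of $s(h^{**}(s)-h^{*}(s))$ together with Hardy-type inequalities on $\bx$ to bound the two tail pieces $h^{**}$ and $h^{*}$ by the single weighted quantity $\|(h^{**}-h^{*})v_{k,n}\|_{\bx}$. This last step — collapsing the two tail pieces into one weighted norm while correctly tracking the powers of $\lambda$ across the two ranges — is the main obstacle, precisely as in \cite{gps} for $k=1$; once it is carried out, taking the infimum over all decompositions gives $\mathrm{RHS}\lesssim K(f,t;X,S_X(v_{k,n}))$, which together with the upper bound and the equivalence of the two right-hand sides yields the lemma.
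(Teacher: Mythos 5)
Your proposal matches the paper's own treatment exactly: the paper gives no proof of Lemma~\ref{T:kfunct_sx} beyond the one-line remark that the case $k=1$ is \cite[Theorem~1.1]{gps} and that the general case is analogous, which is precisely the reduction you make, and your sketch of the adaptation (truncation of $f$ at height $f^*(t^{n/k})$ for the upper bound, subadditivity of $f\mapsto f^{**}$ plus Hardy-operator boundedness for the lower bound) correctly identifies the structure of that proof. One small imprecision worth flagging: the pointwise claim $f^{**}(s)-f^{*}(s)\lesssim (g^{**}(2s)-g^{*}(2s))+h^{**}(s)$ does not hold as written, since $g^{**}(s)-g^{*}(2s)$ differs from $g^{**}(2s)-g^{*}(2s)$ by roughly a term $g^{*}(2s)$ that is not controlled by the oscillation $g^{**}(2s)-g^{*}(2s)$; however this extra term is harmless in norm, as $\|g^{*}(2\cdot)\|_{\bx}\lesssim\|g\|_X$ by dilation boundedness, so the argument can be repaired by absorbing it directly into $\|g\|_X$ rather than into the oscillation, as is done in \cite{gps}.
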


\begin{proposition} \label{111}  If  $ k,m,n\in {\mathbb N}$,  $1<r<\infty$, and   $w\in B_r\cap B_\infty^*$, then
\[
\left(\Lambda_r(w), S_{\Lambda_r(w)}(v_{k+m,n})\right)_{\frac{m}{k+m},r}=S_{\Lambda_r(wv_{mr,n})}(v_{0,n}).%(\chi_{(0,\infty)}).
\]
\end{proposition}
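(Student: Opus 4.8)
Set $X:=\Lambda_r(w)$ and $\theta:=\frac{m}{k+m}$. Since $w\in B_r$, Lemma~\ref{newlemma}(1) shows that $X$ is an r.i.\ space with $\overline{\alpha}_X<1$ and $\overline{X}=\Lambda_r(w)((0,\infty),dt)$; since $w\in B_\infty^*$, Lemma~\ref{newlemma}(3) gives $0<\underline{\alpha}_X$. Thus $0<\underline{\alpha}_X\le\overline{\alpha}_X<1$, so Lemma~\ref{T:kfunct_sx} applies to $X$. I would also record at the outset that $wv_{mr,n}\in B_r$: because $t\mapsto t^{-mr/n}$ is non-increasing, for every $t>0$ one has $t^r\int_t^\infty s^{-r}w(s)s^{-mr/n}\,ds\le t^{r-mr/n}\int_t^\infty s^{-r}w(s)\,ds\lesssim t^{-mr/n}\int_0^t w(s)\,ds\le\int_0^t w(s)s^{-mr/n}\,ds$, where the middle step uses $w\in B_r$. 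Hence $\Lambda_r(wv_{mr,n})$ is an r.i.\ space whose representation space is $\Lambda_r(wv_{mr,n})((0,\infty),dt)$, and the right-hand side of the asserted identity has norm
\[
\|f\|_{S_{\Lambda_r(wv_{mr,n})}(v_{0,n})}=\Big(\int_0^\infty\big[(f^{**}(s)-f^*(s))^*\big]^r\,w(s)\,s^{-mr/n}\,ds\Big)^{1/r}=:\mathcal N(f).
\]

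\textbf{Reducing the left-hand side.} Next I would apply Lemma~\ref{T:kfunct_sx} with $k$ replaced by $k+m$: writing $h:=f^{**}-f^*$ (so that $s\mapsto s\,h(s)$ is non-decreasing by \cite[Prop.~4.2]{cgo}), for all $f\in X+S_X(v_{k+m,n})$ and $t>0$,
\[
K(f,t;X,S_X(v_{k+m,n}))\approx\big\|h\,\chi_{(0,t^{n/(k+m)})}\big\|_{\overline X}+t\big\|\,\cdot^{-\frac{k+m}{n}}h\,\chi_{(t^{n/(k+m)},\infty)}\big\|_{\overline X}.
\]
Inserting this into $\|f\|_{(X,S_X(v_{k+m,n}))_{\theta,r}}=\big(\int_0^\infty[t^{-\theta}K(f,t;X,S_X(v_{k+m,n}))]^r\,\tfrac{dt}{t}\big)^{1/r}$, splitting the $r$-th power of the sum into its two summands, and substituting $u=t^{n/(k+m)}$ (so that $t^{-\theta}\leftrightarrow u^{-m/n}$, $t^{1-\theta}\leftrightarrow u^{k/n}$, and $\tfrac{dt}{t}\leftrightarrow\tfrac{du}{u}$ up to a constant), I obtain
\[
\|f\|_{(X,S_X(v_{k+m,n}))_{\theta,r}}\approx I_1+I_2,\qquad
I_1^{\,r}=\int_0^\infty u^{-\frac{mr}{n}}\big\|h\chi_{(0,u)}\big\|_{\overline X}^{\,r}\,\tfrac{du}{u},\quad
I_2^{\,r}=\int_0^\infty u^{\frac{kr}{n}}\big\|\,\cdot^{-\frac{k+m}{n}}h\,\chi_{(u,\infty)}\big\|_{\overline X}^{\,r}\,\tfrac{du}{u}.
\]

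\textbf{Matching $I_1+I_2$ with $\mathcal N(f)$.} It remains to prove $I_1+I_2\approx\mathcal N(f)$. For the upper bound I would use the elementary pointwise estimates $(h\chi_{(0,u)})^*\le h^*\chi_{(0,u)}$ and $\cdot^{-(k+m)/n}h\,\chi_{(u,\infty)}\le u^{-(k+m)/n}h$, so that $\|h\chi_{(0,u)}\|_{\overline X}^r\le\int_0^u(h^*)^r w$ and, using that $s\,h(s)$ is non-decreasing to control the tail, $\|\cdot^{-(k+m)/n}h\,\chi_{(u,\infty)}\|_{\overline X}^r\lesssim u^{-(k+m)r/n}\int_u^\infty(h^{*})^r w$ together with the trivial tail bound; Fubini (integrating $u^{-mr/n-1}$, resp.\ $u^{kr/n-1}$, in $u$) then collapses both $I_1^r$ and $I_2^r$ into constant multiples of $\mathcal N(f)^r$ after recalling $\mathcal N(f)^r=\tfrac{mr}{n}\int_0^\infty u^{-mr/n}\big(\int_0^u(h^*)^r w\big)\tfrac{du}{u}$. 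For the lower bound — the Holmstedt half — I would split $\int_0^u(h^*)^r w$ into the part of the rearrangement coming from $h\chi_{(0,u)}$ and the part coming from $h\chi_{(u,\infty)}$ via $h^*(2s)\le(h\chi_{(0,u)})^*(s)+(h\chi_{(u,\infty)})^*(s)$, absorb the resulting dilation through the finiteness of the dilation function of $\overline X$, bound the first part by $I_1$, and — exploiting the monotonicity of $s\,h(s)$ together with the identity $f^{**}=Q(f^{**}-f^*)$ and the boundedness of the conjugate Hardy operator $Q$ on $\overline X$ (equivalent to $w\in B_\infty^*$ by Lemma~\ref{newlemma}(3)), and the boundedness of the Hardy operator $P$ on $\overline X$ (equivalent to $w\in B_r$ by Lemma~\ref{newlemma}(1)) — bound the second part by $I_2$. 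Combining the two estimates yields $\|f\|_{(X,S_X(v_{k+m,n}))_{\theta,r}}\approx I_1+I_2\approx\mathcal N(f)$, which is the claim.

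The main obstacle is the last step, and within it the lower bound: since $h=f^{**}-f^*$ is not monotone, passing freely among $\|h\chi_{(0,u)}\|_{\overline X}$, $\int_0^u(h^*)^r w$ and $\int_0^u h^r w$ (and the analogous quantities for the tail, where the extra power weight $\cdot^{-(k+m)/n}$ must be redistributed) is not automatic. It is precisely here that the monotonicity of $s\,h(s)$ and \emph{both} weight conditions $w\in B_r$ and $w\in B_\infty^*$ are used, and the splitting level in the variable $s$ has to be chosen so that the two Hardy inequalities can each be run in the range where they are available.
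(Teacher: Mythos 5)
Your setup (checking the Boyd index condition, applying Lemma~\ref{T:kfunct_sx} with $k+m$ in place of $k$, substituting $u=t^{n/(k+m)}$, and Fubini-izing to reach $I_1+I_2$) matches the paper. The two proofs diverge at the point of comparing $I_1+I_2$ with $\mathcal N(f)$, and here your plan has a genuine gap.

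\textbf{The decisive simplification you miss.} The paper notices that after Fubini, $I_1$ alone is already $\approx \mathcal N(f)$ (this is precisely the identity $\mathcal N(f)^r=\tfrac{mr}{n}\int_0^\infty u^{-mr/n}\big(\int_0^u(h^*)^r w\big)\tfrac{du}{u}$ that you record but do not exploit). Consequently the only thing left to prove is the \emph{one-sided} estimate $I_2\lesssim I_1$. Your plan instead tries to establish both $I_1+I_2\lesssim\mathcal N(f)$ and $\mathcal N(f)\lesssim I_1+I_2$ from scratch, which makes the argument longer than necessary and, more importantly, leads to a wrong step.

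\textbf{Where your upper bound for $I_2$ fails.} You propose
\[
\big\|\,\cdot^{-\frac{k+m}{n}}h\,\chi_{(u,\infty)}\big\|_{\overline X}^{\,r}\ \lesssim\ u^{-\frac{(k+m)r}{n}}\int_u^\infty(h^*)^r w,
\]
i.e., you freeze the decaying factor $s^{-(k+m)/n}$ at its largest value $u^{-(k+m)/n}$. Even granting the (unjustified) passage from the rearrangement of $h\chi_{(u,\infty)}$ to $h^*\chi_{(u,\infty)}$, this bound destroys precisely the decay needed for the $u$-integral: plugging it into $I_2^r=\int_0^\infty u^{kr/n}\|\cdots\|_{\overline X}^{\,r}\,\frac{du}{u}$ gives $\int_0^\infty u^{-mr/n}\big(\int_u^\infty(h^*)^r w\big)\frac{du}{u}$, whose Fubini transform contains $\int_0^s u^{-mr/n-1}\,du=+\infty$. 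So this estimate cannot yield $I_2\lesssim\mathcal N(f)$.

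\textbf{What the paper does instead.} Set $g=f^{**}-f^*$. The monotonicity of $t\mapsto tg(t)$ gives the pointwise estimate $g(t)\,t^{-\frac{k+m}{n}}\lesssim\int_t^\infty g(s)\,s^{-\frac{k+m}{n}-1}ds$, whose right-hand side is non-increasing; since a pointwise majorization by a non-increasing function passes to rearrangements, one may replace the rearrangement of $g(t)t^{-(k+m)/n}$ in $I_2$ by $\int_t^\infty g(s)s^{-\frac{k+m}{n}-1}ds$. Then the auxiliary weight $t^{kr/n}$ is absorbed via $t^{k/n}\le s^{k/n}$ for $s\ge t$, so that the exponent $-\frac{k+m}{n}-1$ inside the tail integral improves to $-\frac{m}{n}-1$. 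Finally, introducing $h_t(s)=\min\{s^{-m/n-1},t^{-m/n-1}\}$ and the Hardy--Littlewood--P\'{o}lya inequality decomposes the resulting quantity as $P(g^*s^{-m/n})+Q(g^*s^{-m/n})$, after which the hypotheses $w\in B_r$ and $w\in B_\infty^*$ (via Lemma~\ref{newlemma}) give the boundedness of $P$ and $Q$ and hence $I_2\lesssim\mathcal N(f)$. Note that the $Q$-operator step is where the extra decay $s^{-m/n-1}$ is genuinely used; this is exactly what your frozen bound discards. Your separate lower-bound plan (the $h^*(2s)$ splitting and the $f^{**}=Q(f^{**}-f^*)$ identity) is unneeded once one keeps track of the exact Fubini identity for $I_1$, and as sketched it also does not clearly reduce the $h\chi_{(u,\infty)}$ contribution to $I_2$, since $\|h\chi_{(u,\infty)}\|_{\overline X}$ and $\|\cdot^{-(k+m)/n}h\chi_{(u,\infty)}\|_{\overline X}$ differ by an \emph{unbounded} factor.
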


\begin{proof}
Let $X:=\Lambda_r(w)=\Lambda_r(w)(\mathbb{R}^n, \lambda_n)$.
Then the  space $\Lambda_r(w)((0,\infty), dt)$ is the representation space of $X$.
 % {\bf Amiran, where $\overline{\Lambda_r(w)}$ is ...}.
% Now we use the fact that
%$w\in  B^*_\infty$ implies that the operator $Q$ is bounded on
%the weighted Lebesgue space $L_r^{\downarrow}(w)$ of monotone decreasing functions.
%$\Lambda_r(w)$ for any $r>1$.
%(\cite[Theorem~3.3]{ne}).
%Also, the operator $Q$ is bounded on $\Lambda_r(w)$  if and only  if $w \in B_\infty^*$ %(\cite{ne}), i.e. there  exists a constant $c$ such that
By Lemma~\ref{newlemma},
%%for $X=\Lambda_r(w)$
our assumptions guarantee that $0< \underline{\alpha}_{X}\leq \overline{\alpha}_{X}< 1$.
Therefore, using  Lemma \ref{T:kfunct_sx} (with $k+m$ instead of $k$), we obtain, for all $t>0$ and $f\in X+S_X(v_{k+m,n})$,
\vskip-0,4cm
 \begin{align*}
K(f,t;X,S_X(v_{k+m,n}))\approx&
\left(\int_0^{t^{\frac n{k+m}}}\left(\left[f^{**}(s)-f^*(s)\right]^*\right)^r w(s) ds \right)^{1/r}\\
&\quad+t\left(\int_{t^{\frac n{k+m}}}^\infty
\left(\left[\left(f^{**}(s)-f^*(s)\right)s^{-\frac {k+m}n}\right]^*\right)^r w(s) ds \right)^{1/r}.%,\,\, t>0.
\end{align*}
\vskip-0,1cm
If
%\begin{equation}\label{100}
$$
Y:=\left(\Lambda_r(w), S_{\Lambda_r(w)}(v_{k+m,n})\right)_{\frac{m}{k+m},r},
$$
%\end{equation}
\vskip-0,1cm
then
  \begin{align*}%\label{101}
%\left(\Lambda_r(w), S_{\Lambda_r(w)}\left(t^{-\frac {k+m}{n}}\right)\right)_{\frac{m}{k+m},r}
  \|f\|_Y=&
 \left(
\int_{0}^\infty
 \left(
t^{-\frac {m}{k+m}}
K(f,t;\Lambda_r(w),S_{\Lambda_r(w)}(v_{k+m,n}))\right)^r \frac{dt}t
%(f^{**}(s)-f^*(s))^r w(s) s^{-\frac {(k+m)r}n} ds
\right)^{1/r}
\\
 \approx&
 \left(
\int_{0}^\infty
t^{-\frac {mr}{k+m}}
\int_0^{t^{\frac n{k+m}}}
\left(\left[f^{**}(s)-f^*(s)\right]^*\right)^r w(s) ds
 \frac{dt}t
\right)^{1/r}\nonumber
\\
&\qquad+
 \left(
\int_{0}^\infty
t^{\frac {kr}{k+m}}
\int_{t^{\frac n{k+m}}}^\infty
\left(\left[\left(f^{**}(s)-f^*(s)\right)s^{-\frac {k+m}n}\right]^*\right)^r w(s)ds
 \frac{dt}t
%(f^{**}(s)-f^*(s))^r w(s) s^{-\frac {(k+m)r}n} ds
\right)^{1/r}\nonumber
\\
=:&I_1+I_2.\nonumber
%& =  \left(
%\int_{0}^\infty \left(\left[f^{**}(t)-f^*(t)\right]^*\right)^r t^{-\frac {mr}{n}}w(t) dt
%\right)^{1/r}
%\\
%&+
 %\left(
%\int_{0}^\infty
%t^{\frac {kr}{n}}
%\left(\left[\left(f^{**}(t)-f^*(t)\right)t^{-\frac {k+m}n}\right]^*\right)^r w(t)dt
%(f^{**}(s)-f^*(s))^r w(s) s^{-\frac {(k+m)r}n} ds
%\right)^{1/r}
%\\
%&
% \approx
%
%\\&
%=\|f\|_{S_{\Lambda_r(w)}(t^{-\frac {m}{n}})}.
\end{align*}
Applying Fubini's theorem, we arrive at
\begin{equation}\label{102}
%$$
I_1\approx \left(
\int_{0}^\infty \left(\left[f^{**}(t)-f^*(t)\right]^*\right)^r t^{-\frac {mr}{n}}w(t) dt
\right)^{1/r}=\|f\|_{S_{\Lambda_r(wv_{mr,n})}}(v_{0,n})%(\chi_{(0,\infty)})
%$$
\end{equation}
and
\begin{equation}\label{103}
%$$
I_2\approx \left(
\int_{0}^\infty
t^\frac{kr}{n}\left(\left[
\left(f^{**}(t)-f^*(t)\right)t^{-\frac {k+m}{n}}\right]^*\right)^r w(t) \,dt
\right)^{1/r}.
%$$
\end{equation}
Thus, it remains to show that
$
\text{RHS}\eqref{103}\lesssim \text{RHS}\eqref{102}.
$
%\left(
%\int_{0}^\infty \left(\left[f^{**}(t)-f^*(t)\right]^*\right)^r t^{-\frac {mr}{n}}w(t) dt
%\right)^{1/r}.

Let $f\in \Mpl(\mathbb{R}^n, \lambda_n)$ and $g(s):=f^{**}(s)-f^*(s)$ for all $s>0$.
Making use of \eqref{104} and the estimate
\quad$
t^{-\frac{k+m}{n}-1} \approx \int_t^\infty s^{-\frac{k+m}{n}-2} \,ds \quad \mbox{for all } t>0,
$
we obtain that
$$
(f^{**}(t)-f^*(t))t^{-\frac{k+m}{n}} \lesssim \int_t^\infty g(s) s^{-\frac{k+m}{n}-1} \,ds
\quad \mbox{for all } t>0.
$$
Together with the fact that the function $t\mapsto t^{\frac{kr}{n}}$ is non-decreasing on $(0,\infty)$,
this implies that
\begin{align*}
\text{RHS}\eqref{103}&\lesssim \Big(\int_0^\infty t^{\frac{kr}{n}}\Big(\int_t^\infty g(s) s^{-\frac{k+m}{n}-1} \,ds\Big)^{r}
w(t) \,dt\Big)^{1/r}\\
&\lesssim \Big(\int_0^\infty \Big(\int_t^\infty g(s) s^{-\frac{m}{n}-1} \,ds\Big)^{r}
w(t) \,dt\Big)^{1/r}.
\end{align*}
Given $t>0$, we define the non-increasing function $h_t$ by
$$
h_t(s):=\min\{s^{-\frac{m}{n}-1}, t^{-\frac{m}{n}-1}\}\quad \mbox{for all } s>0.
$$
Then
$$
\int_t^\infty g(s) s^{-\frac{m}{n}-1} \,ds\le \int_0^\infty g(s) h_t(s)\,ds \quad \mbox{for all } t>0
$$
and, on applying the Hardy-Littlewood-P\'{o}lya  rearrangement inequality, we arrive at
\begin{align*}
\int_t^\infty g(s) s^{-\frac{m}{n}-1} \,ds&\le \int_0^\infty g^*(s) h_t(s)\,ds \\
&=t^{-\frac{m}{n}-1} \int_0^t g^*(s) \,ds + \int_t^\infty g^*(s) s^{-\frac{m}{n}-1} \,ds\\
&\le(P( g^*(s) s^{-\frac{m}{n}}))(t) +(Q (g^*(s) s^{-\frac{m}{n}}))(t) \quad \mbox{for all } t>0.
\end{align*}
Consequently,
\begin{align*}
%$$
\text{RHS}\eqref{103}&\lesssim \Big(\int_0^\infty [(P( g^*(s) s^{-\frac{m}{n}}))(t)]^rw(t)\,dt\Big)^{1/r}+
\Big(\int_0^\infty [(Q( g^*(s) s^{-\frac{m}{n}}))(t)]^rw(t)\,dt\Big)^{1/r}\\
&=:N_1+N_2.
%$$
\end{align*}
Making use of the assumption $w\in B_r\cap B_\infty^*$ and Lemma \ref{newlemma}, the fact that
 the function $t\mapsto g^*(t)t^{-\frac{m}{n}}$ is non-increasing on $(0,\infty)$ and the definition of $g$,
we get
\begin{align*}
N_1&\lesssim \Big(\int_0^\infty \big([g^*(t) t^{-\frac{m}{n}}]^*\big)^r w(t)\,dt\Big)^{1/r}\\
&=\Big(\int_0^\infty \big(g^*(t) t^{-\frac{m}{n}}\big)^r w(t)\,dt\Big)^{1/r}\\
&=\Big(\int_0^\infty \big([f^{**}(t)-f^*(t)]^*t^{-\frac{m}{n}}\big)^r w(t)\,dt\Big)^{1/r}\\
&=\text{RHS}\eqref{102}
\end{align*}
%$$
%\text{RHS}\eqref{103}\lesssim \Big(\int_0^\infty \big([g^*(t) t^{-\frac{m}{n}}]^*\big)^r w(t)\,dt\Big)^{1/r}
%=\Big(\int_0^\infty \big([(f^{**}(t)-f^*(t))^*t^{-\frac{m}{n}}]^*\big)^r w(t)\,dt\Big)^{1/r}\\
%$$
and, similarly,
$$
N_2\lesssim\text{RHS}\eqref{102}.
$$
\end{proof}

\begin{lemma}\label{extra}
 If  $m,n\in {\mathbb N}$,  $1<r<\infty$, and   $w\in B_r\cap B_\infty^*$, then
\begin{equation}\label{001}
%\[
S_{\Lambda_r(wv_{mr,n})}(v_{0,n})%(\mathbb{R}^n, \lambda_n)
\hookrightarrow S_{\Lambda_r(w)}(v_{m,n}).%(\mathbb{R}^n, \lambda_n).
%\]
\end{equation}
\end{lemma}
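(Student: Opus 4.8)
The plan is to unwind the two gages in \eqref{001} and reduce the embedding to a one-dimensional weighted inequality for $g(t):=f^{**}(t)-f^*(t)$, which can then be treated by the same circle of ideas used in the proof of Proposition~\ref{111}. Since $v_{0,n}\equiv1$ and $v_{m,n}(t)=t^{-m/n}$, for $f$ with $f^*(\infty)=0$ (it suffices to take $f\ge0$) the gage on the left of \eqref{001} is, by \eqref{102},
\[
\|f\|_{S_{\Lambda_r(wv_{mr,n})}(v_{0,n})}\approx\Big(\int_0^\infty\big(g^*(t)\big)^r t^{-mr/n}w(t)\,dt\Big)^{1/r}=\text{RHS}\eqref{102},
\]
while $\|f\|_{S_{\Lambda_r(w)}(v_{m,n})}\approx\big(\int_0^\infty([g(t)t^{-m/n}]^*(t))^r w(t)\,dt\big)^{1/r}$. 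Thus \eqref{001} is precisely the estimate $\text{RHS}\eqref{103}\lesssim\text{RHS}\eqref{102}$ in the degenerate case ``$k=0$'', so it can in fact be read off from the computation already performed inside the proof of Proposition~\ref{111}; for completeness I would reproduce that short argument.

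The argument proceeds in three steps. First, I would remove the rearrangement on the left: since $s\mapsto sg(s)$ is non-decreasing by \eqref{104} and $t^{-m/n-1}\approx\int_t^\infty s^{-m/n-2}\,ds$, one obtains the pointwise bound $g(t)t^{-m/n}\lesssim G(t):=\int_t^\infty g(s)s^{-m/n-1}\,ds$, and since $G$ is non-negative, continuous and non-increasing, this yields $[g(\cdot)t^{-m/n}]^*(t)\le G(t)$. Second, fixing $t$ and comparing with the truncated kernel $h_t(s):=\min\{s^{-m/n-1},t^{-m/n-1}\}$, the Hardy--Littlewood--P\'olya rearrangement inequality gives $G(t)\le\int_0^\infty g^*(s)h_t(s)\,ds\le(P\phi)(t)+(Q\phi)(t)$ with $\phi(s):=g^*(s)s^{-m/n}$. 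Third, $w\in B_r$ guarantees the boundedness of the Hardy operator $P$ on $\Lambda_r(w)$ and, together with $w\in B_\infty^*$, that of $Q$, by Lemma~\ref{newlemma}; since $\phi$ is non-increasing (a product of two non-increasing factors), $\|P\phi\|+\|Q\phi\|\lesssim\|\phi\|$, and $\big(\int_0^\infty\phi(t)^r w(t)\,dt\big)^{1/r}$ is exactly $\text{RHS}\eqref{102}$, which closes the chain.

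I do not expect any real obstacle, the statement being essentially a specialization of a computation carried out in the proof of Proposition~\ref{111}. The only two points deserving a little care are the passage $[g(\cdot)t^{-m/n}]^*\le G$, which works precisely because the dominating function $G$ is non-increasing, and the invocation of Lemma~\ref{newlemma} for both $P$ and $Q$, which is exactly where the hypotheses $w\in B_r$ and $w\in B_\infty^*$ enter.
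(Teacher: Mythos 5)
Your proposal is correct and follows the same route as the paper, which after unwinding the two gages simply says the required estimate ``can be proved quite analogously as the estimate $\text{RHS}\eqref{103}\lesssim\text{RHS}\eqref{102}$'' from the proof of Proposition~\ref{111}. You have correctly recognized that the lemma is the $k=0$ specialization of that estimate and have supplied the details (pointwise bound by the non-increasing majorant $G$, Hardy--Littlewood--P\'olya with the truncated kernel $h_t$, and the boundedness of $P$ and $Q$ from Lemma~\ref{newlemma} under $w\in B_r\cap B_\infty^*$), all of which match the paper's argument.
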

\begin{proof}
Put $X:= \Lambda_r(wv_{mr,n}),$ $Y:=\Lambda_r(w)$. %(v_{m,n})(\mathbb{R}^n, \lambda_n).$
Embedding \eqref{001} means that, for all $f \in S_X(v_{0,n})$,
\[
\|(f^{**}-f^*)v_{m,n}\|_{\overline{Y}} \lesssim \|f^{**}-f^*\|_{\overline{X}},
\]
i.e.,
\[
\Big(\int_0^\infty \Big([(f^{**}-f^*)v_{m,n}]^*(t)\Big)^r w(t) \,dt\Big)^{1/r} \lesssim
\Big(\int_0^\infty \Big([f^{**}-f^*]^*(t)\Big)^r w(t)v_{mr,n}(t) \,dt\Big)^{1/r}.
\]
This can be proved quite analogously as the estimate $
\text{RHS}\eqref{103}\lesssim \text{RHS}\eqref{102}.
$
\end{proof}

To prove the needed embeddings
for Sobolev spaces  modelled upon  weighted Lorentz spaces given in Proposition~\ref{1111} below, we  make use of the following lemma,
which is closely related to the results from \cite{MP+} and can be seen as a Sobolev-Gagliardo-Nirenberg type inequality.

 %, which follows from  Theorem~2 in \cite{MP+}, see also  \cite{MP}.
% \footnote{({\bf Sergey, I am waiting for the right formulation and the proof of the next lemma.}) - S: Done
% S: Also, slightly changed the sentence: "To prove the needed embeddings
%for Sobolev spaces  on weighted Lorentz spaces given in Proposition \ref{1111}, we  make use of the following lemma." }
\begin{lemma}\label{vsp11}
Suppose that $X(\mathbb R^n)$ is an r.i. space %with absolutely continuous norm
such that
$
\frac{k-1}{n}< \underline{\alpha}_{X}$, $k\in \mathbb N$, $k<n$,
and the set of bounded functions is dense in $X$.
%$X=X_b$, where $X_b$ is the closure in $X$ of the set of simple functions.
Then
\[
\|t^{-\frac{k}{n}}(f^{**}(t)-f^{*}(t))||_{\bx}\lesssim \big\| |D^kf|^*\big\|_{\bx},\qquad f\in W^kX,
\]
where
$$|D^kf|=
\Big(\sum_{|\alpha|=k} |D^\alpha f|^2\Big)^{1/2}.
$$
\end{lemma}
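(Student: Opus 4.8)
The plan is to separate the argument into an analytic core --- a \emph{pointwise} rearrangement inequality bounding $f^{**}-f^*$ by a Hardy-type average of $(|D^kf|)^*$ --- and a functional-analytic part, namely the boundedness on $\bx$ of the averaging operator that arises, which is governed by the lower Boyd index of $X$.

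First I would dispose of a routine reduction. Since $X\hookrightarrow L_1+L_\infty$, both $f^*(t)$ and $f^{**}(t)$ are finite for every $t>0$; and since the inequality is positively homogeneous and bounded functions are dense in $X$, it suffices (after a standard mollification/truncation) to prove it for $f$ in a convenient dense subclass of $W^kX$, passing to the limit at the end by the lower semicontinuity of $\|\cdot\|_{\bx}$ with respect to almost-everywhere convergence of non-increasing rearrangements.

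The heart of the matter is the pointwise estimate. Using the representation $W^kX=A^kX$ with $A$ the Sobolev integral operator, together with the Talenti/isoperimetric symmetrization technique in the form developed in \cite{MP+}, I would establish a Sobolev--Gagliardo--Nirenberg inequality of the shape
\[
f^{**}(t)-f^*(t)\ \lesssim\ t^{k/n}\,G(t)\qquad\text{for all }t>0,
\]
where $G$ is a non-increasing function obtained from $(|D^kf|)^*$ by a Hardy-type averaging whose scaling is that of the operator $Q_{(k-1)/n}$, with $Q_\eta g(t):=t^{-\eta}\int_t^\infty s^\eta g(s)\,\frac{ds}{s}$ --- and not that of $Q_{k/n}$, the gain of one order being precisely what distinguishes $f^{**}-f^*$ from the larger quantity $f^{**}$, and being available because $k<n$. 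Multiplying this inequality by $t^{-k/n}$ and using that $G$ is non-increasing (so that the non-increasing rearrangement of $t\mapsto t^{-k/n}(f^{**}(t)-f^*(t))$ is still dominated by $G$), one obtains
\[
\big\|\,t^{-k/n}(f^{**}(t)-f^*(t))\,\big\|_{\bx}\ \le\ \|G\|_{\bx}.
\]
It then remains to prove $\|G\|_{\bx}\lesssim\big\|\,|D^kf|^*\,\big\|_{\bx}$, i.e.\ that the averaging operator producing $G$ is bounded on $\bx$ under the hypothesis $(k-1)/n<\underline\alpha_X$. For $Q_\eta$ this is the abstract counterpart of Lemma~\ref{newlemma}(2): the substitution $s=t/u$ gives $Q_\eta g(t)=\int_0^1 u^{-\eta}(E_{1/u}g)(t)\,\frac{du}{u}$, so that by the integral form of Minkowski's inequality $\|Q_\eta g\|_{\bx}\le\big(\int_0^1 u^{-\eta}h_X(u)\,\frac{du}{u}\big)\|g\|_{\bx}$, and the integral is finite since $h_X(u)\lesssim u^{\underline\alpha_X-\varepsilon}$ on $(0,1)$ for $\varepsilon>0$ small and $\underline\alpha_X-\eta-\varepsilon>0$ when $\eta=(k-1)/n$. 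Feeding the relevant majorant of $(|D^kf|)^*$ into this bound and combining the displays gives the assertion.

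The step I expect to be the main obstacle is the pointwise inequality above: one must check that the symmetrization estimates of \cite{MP+} genuinely produce the exponent $(k-1)/n$ --- rather than the exponent $k/n$ that the crude bound $f^{**}-f^*\le f^{**}$ combined with O'Neil's inequality for potentials would give --- which is where the monotonicity of $t\mapsto t(f^{**}(t)-f^*(t))$ (cf.~\eqref{104}) and the restriction $k<n$ have to be exploited with care.
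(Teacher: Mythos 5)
Your architecture — a symmetrization estimate from \cite{MP+}, the boundedness of a $Q_\eta$-type operator governed by $\underline\alpha_X$, and a density reduction — is the same three ingredients the paper uses, but you have assigned them the wrong roles, and the step you yourself flag as the main obstacle is in fact where the argument breaks. There is no pointwise inequality of the form $f^{**}(t)-f^*(t)\lesssim t^{k/n}G(t)$ with $G$ a $Q_{(k-1)/n}$-type average of $(|D^kf|)^*$. For $k=1$ already, take $f$ a smoothed indicator of a ball of volume $V$: then $|\nabla f|^*$ is supported in $(0,\varepsilon)$ with $\varepsilon\ll V$, so $Q_0\big((|\nabla f|)^*\big)(t)=0$ for $t>\varepsilon$, whereas $f^{**}(t)-f^*(t)\sim V/t$ for $t\gg V$; so no $Q$-type kernel can dominate $f^{**}-f^*$ pointwise. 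What \cite{MP+} (Theorem~2) actually yields, and what the paper cites, is a \emph{norm} estimate for $f\in C_0^\infty$, namely $\|[t^{-k/n}(f^{**}(t)-f^*(t))]^*\|_{\bx}\lesssim \||D^kf|^*\|_{\bx}$, valid for every r.i.\ $X$ with no Boyd-index restriction (it comes from a Calder\'on-type majorization). The operator $Q_{(k-1)/n}$ then appears in the paper in an entirely different spot: one writes the pointwise self-improvement $t^{-k/n}(f^{**}(t)-f^*(t))\lesssim Q_{(k-1)/n}\big(s^{-k/n}(f^{**}(s)-f^*(s))\big)(t)$, obtained from the monotonicity of $s\mapsto s(f^{**}(s)-f^*(s))$ (the relation \eqref{104} you cite) and $\int_t^{2t}x^{-k/n-2}\,dx\approx t^{-k/n-1}$, and then uses boundedness of $Q_{(k-1)/n}$ on $\bx$. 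This is not a step in producing the pointwise Sobolev estimate but in transferring the inequality from the rearranged profile to $t^{-k/n}(f^{**}(t)-f^*(t))$ itself.

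The density step is also not the ``routine mollification/truncation'' you describe. Since \cite[Theorem~2]{MP+} is stated for $C_0^\infty(\mathbb R^n)$, one must show that $C_0^\infty$ is dense in $X$, which fails for general r.i.\ spaces. This is exactly where the remaining hypotheses enter: the condition $(k-1)/n<\underline\alpha_X$ is first used to get $t^{(1-k)/n}\lesssim Q_{(k-1)/n}(\chi_{(0,1)})(t)$ (resp.\ $\log(1/t)$ for $k=1$), whence $\varphi_X(t)\lesssim t^{(k-1)/n}\|\chi_{(0,1)}\|_{\bx}\to 0$ as $t\to 0+$; combined with ``bounded functions dense'' this gives $X=X_a=X_b$ with $X_b$ separable (via \cite[Chapter~2, Theorem~5.5]{besh}), then density of continuous functions by Semenov's theorem, then density of $C_0^\infty$, and finally the passage to the limit uses the Lorentz--Shimogaki continuity of $f\mapsto f^*$ in $\bx$. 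In short: your Minkowski-integral argument for $\|Q_\eta\|_{\bx\to\bx}$ is fine and matches the paper's reliance on $\underline\alpha_X$, but the pointwise $Q$-kernel bound you hope to extract from \cite{MP+} does not exist, and the place where the lower Boyd index and the density hypothesis really do the work — the approximation of $W^kX$ by $C_0^\infty$ — is missing from your proposal.
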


\begin{proof}
First, from  Theorem~2 in \cite{MP+}, %  (see also  \cite{MP}),
we have
\begin{equation} \label{milpus}
\|\left(t^{-\frac{k}{n}}(f^{**}(t)-f^{*}(t))\right)^{*}||_{\bx}\lesssim \big\| |D^kf|^*\big\|_{\bx}, \quad f\in C_0^\infty(\R^n).
\end{equation}
Further, we show that the condition $\frac{k-1}{n}< \underline{\alpha}_{X}$ implies $\lim_{t\to 0+} \varphi_X(t)=0$,  where  $\varphi_X$ is the fundamental function of $X$.
Indeed, for  $t\in (0,\frac{1}{2})$, it follows that  $ t^{\frac{1-k}{n}}\lesssim  Q_{\frac{k-1}{n}}\left(\chi_{(0,1)}\right)(t)$ for $k>1$ and  $\log\frac{1}{t}
%\left(\frac{1}{t}\right)
 \lesssim  Q_{\frac{k-1}{n}}\left(\chi_{(0,1)}\right)(t)$ for $k=1$
and using boundedness of $Q_{\frac{k-1}{n}}$ in $\bx$ (see \cite[Theorem~5.15, p. 150]{besh}), we derive
\begin{align*}\varphi_X(t)&=\|\chi_{(0,t)}\|_{\bx}\lesssim t^{\frac{k-1}{n}} \|Q_{\frac{k-1}{n}}\left(\chi_{(0,1)}\right)\|_{\bx}\lesssim t^{\frac{k-1}{n}} \|\chi_{(0,1)}\|_{\bx} \quad &\text{if}\quad k>1,\\
	\varphi_X(t)&=\|\chi_{(0,t)}\|_{\bx}\lesssim \Big(\log\frac{1}{t}\Big)^{-1} \|Q_{\frac{k-1}{n}}\left(\chi_{(0,1)}\right)\|_{\bx}\lesssim
\Big(\log\frac{1}{t}\Big)^{-1}
%\log\left(\frac{1}{t}\right)^{-1}
 \|\chi_{(0,1)}\|_{\bx} \quad &\text{if} \quad k=1.
\end{align*}
Thus, $\lim_{t\to 0+} \varphi_X(t)=0$.
Using
   \cite[Theorem~5.5, Chapter~2, p. 67]{besh},
  we obtain that $X_a=X_b$
   and   $X_b$ is separable, where
   $X_a$ is the subset of functions $f\in X$ which have absolutely continuous norms and
    $X_b$ is the closure in $X$ of the set of simple functions.
   By our assumption $X=X_b$. Thus $X=X_a=X_b$.
   Then in light of Semenov's theorem (see \cite[Theorem~8, Chapter~II]{KPS}), it  follows that continuous functions are dense in $X_b$.
Further, by standard  density argument,
%   and by standard argument  by approximation partition unity
  one can see that $C_0^\infty(\R^n)$ is dense in $X_b$.
  (For another proof see Remark 3.13 in \cite{ego}. Somewhat similar argument can be found in \cite{shar}.)

  By  Lorentz-Shimogaki result \cite[Theorem~7.4, p.~169]{besh} and
 \cite[Theorem~4.6, p.~61]{besh}, if $\|f_k-f\|_X\to 0$, then $\|f_k^*- f^*\|_{\overline{X}}\to 0$ as $k\to \infty$.
Thus, using a  limiting argument, we may extend the validity of  \eqref{milpus} from functions in $C_0^\infty(\R^n)$
 to  all functions in $W^kX$.

 Since $t(f^{**}(t)-f^{*}(t))$ is an increasing function, we have
\begin{align*}
t^{-\frac{k}{n}}(f^{**}(t)-f^{*}(t))&\lesssim
t(f^{**}(t)-f^{*}(t))\int_t^{2t} x^{-\frac{k}{n}-2}\,dx\\
&\lesssim  Q_{\frac{k-1}{n}}\left(t^{-\frac{k}{n}}(f^{**}(t)-f^{*}(t))\right)(t).
\end{align*}
Taking into account the   condition $\frac{k-1}{n}< \underline{\alpha}_{X}$, the operator $Q_{\frac{k-1}{n}}$ is bounded on  $\bx$ and therefore
\begin{align*}
\|t^{-\frac{k}{n}}(f^{**}(t)-f^{*}(t))||_{\bx}&\lesssim \|Q_{\frac{k-1}{n}}\left(t^{-\frac{k}{n}}(f^{**}(t)-f^{*}(t))\right)||_{\bx}\\
&\lesssim \|\left(t^{-\frac{k}{n}}(f^{**}(t)-f^{*}(t))\right)^{*}||_{\bx}\lesssim \big\| |D^kf|^*\big\|_{\bx}.
\end{align*}
\end{proof}

\begin{proposition} \label{1111} If  $ k,m, n\in {\mathbb N}$, $k+m< n$, $1<r<\infty$, and
$w\in B_r\cap B_{\frac{r(k+m-1)}{n}}^*$,
then
\begin{align}
W^{m}\Lambda_r(w)&\hookrightarrow S_{\Lambda_r(w)}(v_{m, n})%S_{\Lambda_r(w)}(t^{-\frac {m}{n}})
\label{usl1}\\
\intertext{and}
 W^{k+m}\Lambda_r(w)&\hookrightarrow S_{\Lambda_r(w)}(v_{k+m, n}).%S_{\Lambda_r(w)}(t^{-\frac {k+m}{n}}).
\label{usl2}\end{align}
\end{proposition}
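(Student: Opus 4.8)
The plan is to derive both \eqref{usl1} and \eqref{usl2} directly from Lemma~\ref{vsp11} applied to the r.i.\ space $X:=\Lambda_r(w)$, once with the smoothness parameter $m$ and once with $k+m$. Concretely I would proceed in two steps: first verify that $X=\Lambda_r(w)$ satisfies the hypotheses of Lemma~\ref{vsp11} for \emph{both} values of that parameter, and then simply match the conclusion of the lemma with the definition of the cones $S_{\Lambda_r(w)}(v_{m,n})$ and $S_{\Lambda_r(w)}(v_{k+m,n})$.

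For the first step: since $1<r<\infty$ and $w\in B_r$, Lemma~\ref{newlemma}(1) shows that $X=\Lambda_r(w)$ is an r.i.\ space with representation space $\overline X=\Lambda_r(w)((0,\infty),dt)$. Setting $\eta:=\frac{k+m-1}{n}$, the assumptions $k,m\ge 1$ and $k+m<n$ give $\eta\in(0,1)$, and the hypothesis $w\in B^*_{r(k+m-1)/n}=B^*_{\eta r}$, combined with $w\in B_r$ and the equivalence (a)$\,\Leftrightarrow\,$(c) in Lemma~\ref{newlemma}(2), yields
\[
\frac{m-1}{n}<\frac{k+m-1}{n}<\underline{\alpha}_X ,
\]
which is exactly the Boyd-index requirement of Lemma~\ref{vsp11} for the parameters $m$ and $k+m$ respectively; the size restriction is met too, since $m<k+m<n$. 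It remains to check that bounded functions are dense in $X$: for $f\in\Lambda_r(w)$ the truncations $f_N:=\sgn(f)\min(|f|,N)$ satisfy $(f-f_N)^*=(f^*-N)_+\le f^*$ with $(f^*(t)-N)_+\to 0$ for a.e.\ $t$, so $\|f-f_N\|_X\to 0$ by dominated convergence. (One also notes that $w\in B^*_{r(k+m-1)/n}$ forces $\int_0^\infty w=\infty$, so every $f\in\Lambda_r(w)$ automatically satisfies $f^*(\infty)=0$ and hence lies in the cones in question.)

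For the second step, apply Lemma~\ref{vsp11} with the parameter $m$ to get, for every $f\in W^m\Lambda_r(w)$,
\[
\big\|t^{-\frac mn}\big(f^{**}(t)-f^*(t)\big)\big\|_{\overline X}\ \lesssim\ \big\||D^m f|^*\big\|_{\overline X}=\big\||D^m f|\big\|_X\ \le\ \sum_{|\alpha|=m}\|D^\alpha f\|_X=|f|_{W^m\Lambda_r(w)} ,
\]
where the last inequality uses the pointwise bound $|D^m f|\le\sum_{|\alpha|=m}|D^\alpha f|$ and the lattice property of $X$. By the definition of the cone $S_{\Lambda_r(w)}(v_{m,n})$ and of the weight $v_{m,n}(t)=t^{-m/n}$ from \eqref{weight}, the left-hand side equals $\|f\|_{S_{\Lambda_r(w)}(v_{m,n})}$, which proves \eqref{usl1}. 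Running the identical argument with $m$ replaced by $k+m$---legitimate because we showed $\frac{k+m-1}{n}<\underline{\alpha}_X$ and $k+m<n$---gives $\|f\|_{S_{\Lambda_r(w)}(v_{k+m,n})}\lesssim|f|_{W^{k+m}\Lambda_r(w)}$, i.e.\ \eqref{usl2}. The main obstacle---really the only step requiring genuine attention---will be the first one: recognizing that the single assumption $w\in B^*_{r(k+m-1)/n}$ is precisely what simultaneously supplies the two Boyd-index inequalities needed by the two applications of Lemma~\ref{vsp11}; once this is in place, the proposition is an immediate corollary of that lemma.
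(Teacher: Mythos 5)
Your proof is correct and follows essentially the same route as the paper: invoke Lemma~\ref{newlemma}(2) to translate $w\in B^*_{r(k+m-1)/n}$ into the Boyd-index bound $\frac{k+m-1}{n}<\underline{\alpha}_X$ (and a fortiori $\frac{m-1}{n}<\underline{\alpha}_X$), then apply Lemma~\ref{vsp11} with $l=m$ and $l=k+m$. You go slightly further than the paper by explicitly verifying the density-of-bounded-functions hypothesis of Lemma~\ref{vsp11} and the $f^*(\infty)=0$ condition via $\int_0^\infty w=\infty$, which is a welcome completion of a step the paper leaves implicit.
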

\begin{proof}

%Continuing the proof,
%by Lemma \ref{newlemma}
% the fact that $Q_{\kappa}$ is bounded in $\Lambda_r(w)$ follows from the fact that
%$w\in B_{\kappa}%{\frac{rn}{k-1}}^*$.
%Further, $Q_{\kappa}$ is bounded in $\Lambda_r(w)$
%is valid if and only if
%$\kappa< \underline{\alpha}_{\Lambda_r(w)}$, see \cite[Theorem~5.15]{besh}.

Set  $X:=\Lambda_r(w)(\mathbb{R}^n, \lambda_n)$. By Lemma \ref{newlemma}, part 2, the assumption
$w\in B_r\cap B_{\frac{r(k+m-1)}{n}}^*$ implies that
$\frac{k+m-1}{n}< \underline{\alpha}_{X}$, which, in turn, gives $\frac{m-1}{n}< \underline{\alpha}_{X}$. Consequently, if
$l \in \{m, k+m\}$, then, by Lemma \ref{vsp11},
\[
 \big\| |D^lf|^*\big\|_{\bx} \gtrsim \|t^{-\frac{l}{n}}(f^{**}(t)-f^{*}(t))||_{\bx},
\]
and embeddings \eqref{usl1} and \eqref{usl2} follow.
\end{proof}
%By Lemmas \ref{newlemma}
% and \ref{vsp11},  embedding (\ref{usl1})
%follows from the condition $\frac{k-1}{n}< \underline{\alpha}_{\Lambda_r(w)}$ or equivalently,
%$w\in B_{\frac{r(k-1)}{n}}^*$,
%and embedding (\ref{usl2}) from the condition  $\frac{k+m-1}{n}< \underline{\alpha}_{\Lambda_r(w)}$
%or equivalently,
%$w\in B_{\frac{r(k+m-1)}{n}}^*$. Clearly, the more restrictive condition is $w\in B_{\frac{r(k+m-1)}{n}}^*$.
%\end{proof}

\subsection{The Ul'yanov inequality between weighted Lorentz spaces.}

The next theorem provides an estimate of the $K$-functional $K(f,t; S_r(w), W^kS_r(w))$. % with a particular choice of $w$.
  Note that in general,
the function gaged cone $S_r(w)$ is not linear (cf., e.g., \cite{cgmp}).
For the definition of the  $K$-functional for the couple $(S_r(w), W^kS_r(w))$ see the discussion
in Section 3. %after (\ref{classes}).

% However, the definition of this  $K$-functional is  analogous to that mentioned
%in Section \ref{section2} for a~compatible couple of Banach spaces.

\begin{theorem}\label{Sh_Uly_Ineq} If  $ k,m, n\in {\mathbb N}$, $k+m< n$, $1<r<\infty$,
$v_{mr, n}(t)=t\sp{-\frac{mr}{n}}$, and $w\in B_r\cap B_{\frac{r(k+m-1)}{n}}^*$, %\newline
then
%\vskip-0,6cm
%\begin{align}\label{004}
%\[
%K(f,t; S_r(wv_{mr,n}),& W^k S_r(wv_{mr,n}))\\
%&\lesssim
%\left(\int_0^{t^{\frac{k+m}{k}}}\left(s^{-\frac{m}{k+m}}K(f,s; \Lambda_r(w), W^{k+m}\Lambda_r(w))\right)^r\frac{ds}{s}\right)^{\frac1r}\notag
%\left(\int_0^{t}\left(s^{-m}\omega_{k+m}(f,s)_{\Lambda_r(w)}\right)^r\frac{ds}{s}\right)^{1/r}%\quad \mbox{for all \ }t>0.
%\]
%\end{align}
\begin{align}\label{004}
%\[
K(f,t^k; S_r(wv_{mr,n}),& W^k S_r(wv_{mr,n}))\\
&\lesssim
\left(\int_0^{t}\left(s^{-{m}}K(f,s^{k+m}; \Lambda_r(w), W^{k+m}\Lambda_r(w))\right)^r\frac{ds}{s}\right)^{\frac1r}\notag
%\left(\int_0^{t}\left(s^{-m}\omega_{k+m}(f,s)_{\Lambda_r(w)}\right)^r\frac{ds}{s}\right)^{1/r}%\quad \mbox{for all \ }t>0.
%\]
\end{align}
%\vskip-0,6cm
for all $\,t>0$ and $f \in \Lambda_r(w)$ %\in S_r(wv_{mr,n})$.
{\rm (}for which ${\rm RHS}\eqref{004}$ is finite{\rm )}.
%\[K(f,t^k; S_r(t^{-\frac {mr}{n}}w), W^k S_r(t^{-\frac {mr}{n}}w)) \lesssim
%\left(\int_0^{t}\left(s^{-m}\omega_{k+m}(f,s)_{\Lambda_r(w)}\right)^r\frac{ds}{s}\right)^{\frac1r}\quad \mbox{for all \ }t>0.
%\]

\end{theorem}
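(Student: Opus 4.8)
The plan is to deduce \eqref{004} from Theorem~\ref{emb-ul-holm}(A), applied with the r.i.\ Banach function space $X:=\Lambda_r(w)$, the Sobolev integral operator $A$ (so that $A^\ell X=W^\ell\Lambda_r(w)$ for $\ell\in\{m,k+m\}$), the exponents $\tau:=k$ and $\sigma:=m$, the function gaged cones $Z:=S_r(wv_{mr,n})$ and $Y:=S_{\Lambda_r(w)}(v_{k+m,n})$, and the Banach lattice $F_0$ with $\|h\|_{F_0}:=\big(\int_0^\infty(s^{-m/(k+m)}|h(s)|)^r\,\frac{ds}{s}\big)^{1/r}$, for which $(\cdot,\cdot)_{F_0}=(\cdot,\cdot)_{m/(k+m),r}$. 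First I would record the elementary facts that $w\in B_r$ makes $X$ an r.i.\ space with representation space $\Lambda_r(w)((0,\infty),dt)$ (Lemma~\ref{newlemma}) and that $B_q^*\subset B_\infty^*$ for every $q>0$ (since $\log(t/s)\le C_q(t/s)^q$ on $0<s\le t$); thus $w\in B_r\cap B_{r(k+m-1)/n}^*$ implies $w\in B_r\cap B_\infty^*$, and every auxiliary result of Section~\ref{section5} becomes available under the present hypotheses.

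Next I would verify the hypotheses of Theorem~\ref{emb-ul-holm}(A). The second embedding in~\eqref{emb1}, $A^{k+m}X=W^{k+m}\Lambda_r(w)\hookrightarrow Y$, is exactly~\eqref{usl2} of Proposition~\ref{1111} (here $k+m<n$ is used). The first embedding in~\eqref{emb1}, $A^{m}X=W^{m}\Lambda_r(w)\hookrightarrow Z$, follows by chaining~\eqref{usl1} of Proposition~\ref{1111} with Lemma~\ref{lemma5_2} applied to $v=v_{m,n}$, since $v_{m,n}^{\,r}=v_{mr,n}$ gives $S_{\Lambda_r(w)}(v_{m,n})\hookrightarrow S_r(wv_{mr,n})$. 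For condition~\eqref{emb2}, $(X,Y)_{F_0}\hookrightarrow Z$, I would use Proposition~\ref{111} to identify $(\Lambda_r(w),S_{\Lambda_r(w)}(v_{k+m,n}))_{m/(k+m),r}=S_{\Lambda_r(wv_{mr,n})}(v_{0,n})$, then Lemma~\ref{extra} to embed this into $S_{\Lambda_r(w)}(v_{m,n})$, and finally Lemma~\ref{lemma5_2} once more to reach $S_r(wv_{mr,n})=Z$. A direct computation gives $\Xi_0(t)=\|\min(\cdot,t)\|_{F_0}\approx t^{k/(k+m)}$, so $\Xi_0(1)<\infty$, $\Xi_0(0)=0$, $\Xi_0(\infty)=\infty$, and the generalized reverse function obeys $\phi_0(t)\approx t^{(k+m)/k}$.

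With this, Theorem~\ref{emb-ul-holm}(A) evaluated at the point $t^k$ yields
\begin{align*}
K(f,t^k;Z,A^kZ)&\lesssim\big\|K(f,s;X,A^{k+m}X)\,\chi_{(0,\phi_0(t^k))}(s)\big\|_{F_0}\\
&\qquad+K(f,\phi_0(t^k);X,A^{k+m}X)\,\big\|\chi_{(\phi_0(t^k),\infty)}(s)\big\|_{F_0}.
\end{align*}
Since $\phi_0(t^k)\approx t^{k+m}$, the first term on the right is $\approx\big(\int_0^{t^{k+m}}(s^{-m/(k+m)}K(f,s;X,A^{k+m}X))^r\,\frac{ds}{s}\big)^{1/r}$ and the second is $\approx t^{-m}K(f,t^{k+m};X,A^{k+m}X)$. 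Using the quasiconcavity of $s\mapsto K(f,s;X,A^{k+m}X)$ one has $K(f,s;X,A^{k+m}X)\gtrsim K(f,t^{k+m};X,A^{k+m}X)$ for $s\in(t^{k+m}/2,t^{k+m})$, so the second term is absorbed into the first; hence $K(f,t^k;Z,A^kZ)\lesssim\big(\int_0^{t^{k+m}}(s^{-m/(k+m)}K(f,s;X,A^{k+m}X))^r\,\frac{ds}{s}\big)^{1/r}$. The substitution $s=u^{k+m}$ turns the right-hand side into $\big(\int_0^{t}(u^{-m}K(f,u^{k+m};\Lambda_r(w),W^{k+m}\Lambda_r(w)))^r\,\frac{du}{u}\big)^{1/r}$, which is $\text{RHS}\eqref{004}$; recalling $Z=S_r(wv_{mr,n})$ and $A^kZ=W^kS_r(wv_{mr,n})$ finishes the proof for all $t>0$ and all admissible $f$ (the finiteness of the right-hand side forcing the decomposability of $f$, as in Remark~\ref{remark-}(ii)).

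The step I expect to be the main obstacle is the verification of~\eqref{emb2}: one must recognize the weighted $S$-cone on the right-hand side as a genuine real interpolation space (Proposition~\ref{111}) and then pass from $S_{\Lambda_r(wv_{mr,n})}(v_{0,n})$ to $S_r(wv_{mr,n})$ \emph{without} assuming $wv_{mr,n}\in B_r$ --- which is precisely why the detour through Lemma~\ref{extra} and Lemma~\ref{lemma5_2} (rather than a direct application of Lemma~\ref{lemma5_2} to the weight $wv_{mr,n}$) is needed. The remaining ingredients --- the Sobolev-type embeddings of Proposition~\ref{1111}, the computation of $\Xi_0$ and $\phi_0$, the absorption of the boundary term, and the change of variables --- are routine.
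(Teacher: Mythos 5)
Your proposal is correct and follows essentially the same route as the paper: both rest on Theorem~\ref{emb-ul-holm}(A) with $X=\Lambda_r(w)$, $Y=S_{\Lambda_r(w)}(v_{k+m,n})$, $\sigma=m$, $\tau=k$, the lattice $F_0$ you wrote down, and the chain Proposition~\ref{1111} $\to$ Proposition~\ref{111} $\to$ Lemma~\ref{extra} $\to$ Lemma~\ref{lemma5_2} to verify \eqref{emb1}--\eqref{emb2}. The only cosmetic difference is that the paper first records the reduction $K(f,t;S_r(wv_{mr,n}),W^kS_r(wv_{mr,n}))\lesssim K(f,t;S_{\Lambda_r(w)}(v_{m,n}),W^kS_{\Lambda_r(w)}(v_{m,n}))$ via Lemma~\ref{lemma5_2} and then applies Theorem~\ref{emb-ul-holm}(A) with $Z=S_{\Lambda_r(w)}(v_{m,n})$, while you fold Lemma~\ref{lemma5_2} into the embedding hypotheses and apply the theorem directly with $Z=S_r(wv_{mr,n})$; the two are equivalent.
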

\begin{proof}
By Lemma \ref{lemma5_2},
\[
Z:=S_{\Lambda_r(w)}(v_{m,n}) \hookrightarrow S_r(wv_{mr,n}),
\]
which implies that
\begin{equation}\label{002}
K(f,t;S_r(wv_{mr,n}), W^k S_r(wv_{mr,n})) \lesssim K(f,t; Z, W^kZ)
\end{equation}
for all  $f \in Z$ and all $t>0$.

To estimate \text{RHS}\eqref{002}, we are going to apply Theorem \ref{emb-ul-holm} (A),
with  $X:=\Lambda_r(w), Y:=S_{\Lambda_r(w)}(v_{k+m,n}),$ the function gaged cone $Z$ mentioned above, with the Sobolev integral operator
as the potential operator $A$, and the Banach lattice $F_0$ defined as the set of all functions $h \in \mathcal{M}(0,\infty)$ such that
$$
\|h\|_{F_0}:=\Big(\int_0^\infty\Big(s^{-\frac{m}{k+m}} |h(s)|\Big)^r \frac{ds}{s}\Big)^{1/r} <\infty.
$$

Note also that the assumption $w\in B_r\cap B_{\frac{r(k+m-1)}{n}}^*$ and Lemma \ref{newlemma} imply that
$w\in B_r\cap B_\infty^*$.

Embeddings \eqref{usl2} and \eqref{usl1} of Proposition \ref{1111} show that assumption \eqref{emb1} of Theorem~\ref{emb-ul-holm} (A)
is satisfied with  $\sigma:=m$ and $\tau:=k$. Using %embedding \eqref{usl2} of Proposition \ref{1111} and
Proposition \ref{111}, we arrive at
\[
%\left(\Lambda_r(w), W^{k+m}\Lambda_r(w)\right)_{\frac{m}{k+m},r} \hookrightarrow
\left(\Lambda_r(w), S_{\Lambda_r(w)}(v_{k+m,n})\right)_{\frac{m}{k+m},r}=S_{\Lambda_r(wv_{mr,n})}(v_{0,n}).%(\chi_{(0,\infty)}).
\]
Since, by Lemma \ref{extra},
\[S_{\Lambda_r(wv_{mr,n})}(v_{0,n})%(\mathbb{R}^n, \lambda_n)
\hookrightarrow S_{\Lambda_r(w)}(v_{m,n}),
\]
we obtain that
\[
%\left(\Lambda_r(w), W^{k+m}\Lambda_r(w)\right)_{\frac{m}{k+m},r}
\left(\Lambda_r(w), S_{\Lambda_r(w)}(v_{k+m,n})\right)_{\frac{m}{k+m},r}\hookrightarrow
S_{\Lambda_r(w)}(v_{m,n}), %S_{\Lambda_r(wv_{mr,n})}(v_{0,n}),
\]
which means that assumption \eqref{emb2} of Theorem~\ref{emb-ul-holm} (A) is also satisfied. Consequently, estimate \eqref{emb3}
of Theorem~\ref{emb-ul-holm} (A) implies that
\begin{align}\label{003}
K(f,t; Z, W^kZ)
&\lesssim \left(\int_0^{t^{\frac{k+m}{k}}}\left(s^{-\frac{m}{k+m}}K(f,s; \Lambda_r(w), W^{k+m}\Lambda_r(w))\right)^r\frac{ds}{s}\right)^{\frac1r}
\\
&\qquad \ \
+K(f,t^{\frac{k+m}{k}}; \Lambda_r(w), W^{k+m}\Lambda_r(w))
\left(\int_{t^{\frac{k+m}{k}}}^\infty\left(s^{-\frac{m}{k+m}}\right)^r\frac{ds}{s}\right)^{\frac1r} \notag
\\
&\approx
\left(\int_0^{t^{\frac{k+m}{k}}}\left(s^{-\frac{m}{k+m}}K(f,s; \Lambda_r(w), W^{k+m}\Lambda_r(w))\right)^r\frac{ds}{s}\right)^{\frac1r}.\notag
%\\
%&\approx\left(\int_0^{t^{\frac{1}{k}}}\left(s^{-m}
%\omega_{k+m}(f,s)_{\Lambda_r(w)}\right)^r\frac{ds}{s}\right)^{\frac1r}.\notag
\end{align}
Combining estimates \eqref{002} and  \eqref{003}, %and  \eqref{mod-equi},
we obtain \eqref{004}.%the result.
\end{proof}

\begin{remark}\label{remark---} Note that Theorem \ref{Sh_Uly_Ineq} remains true if the $K$-functionals $K$ are replaced by
the $K$-functionals $K_0$ (cf. Remark \ref{remark--}).
%\footnote{S. Is it clear now (after giving definition  (3.4)) what we mean by $K_0$ ? }%or by the K-functional $K_1$ given by \eqref{k-f0} or by \eqref{k-f1}.
\end{remark}

%\bigskip
Since  $S_r(w)$ is not a linear space, the calculation of
 %. Therefore, it is not easy to calculate
  the
 $K$-functional \\ $K(f,t; S_r(wv_{mr,n}), W^k S_r(wv_{mr,n}))$
 may cause additional difficulties.
In order to use the previous theorem, we would like to find a Banach function space $Y$ such that $S_r(wv_{mr,n})\hookrightarrow Y$.
The smallest such space $Y$ %BFS where $ S_r(wv_{mr,n})$ is embedded
 is  the second associate space $(S_r(wv_{mr,n}))^{\prime \prime}$.

By \cite[Theorem~4.1]{cgmp}, if
\begin{equation} \label{5.3*}
\int_0^\infty t^{-\frac {mr}{n}-r}w(t)dt=\infty,
\end{equation}
 then
\[(S_r(wv_{mr,n}))^{\prime }=\Gamma_{r^\prime}(\overline{w}),
\]
where %\footnote{S: I changed : $\Omega\to \R^n$}
$$
\Gamma_r(\overline{w})=\left\{ f\in \M(\R^n):\quad \|f\|_{\Gamma_r(\overline{w})}:=\Big(\int_{0}^{\infty}\big(f^{\ast\ast}(s)\big)^{r}\overline{w}(s)\,ds\Big)^{1/r}<\infty\right\}
$$
and
\begin{equation}\label{3333--}
\overline{w}(t)=
t^{-\frac {mr}{n}-r}
w(t)
\left(\int_t^\infty
s^{-\frac {mr}{n}-r}
w(s)
ds\right)^{-r^{\prime}}\quad \mbox{for all } t>0.\end{equation}

Now we can  use \cite[Theorem~A]{gk}  to get that
$$%\begin{equation}\label{3333--}
(S_r(wv_{mr,n}))^{\prime \prime }=(\Gamma_{r^\prime}(\overline{w}))^{\prime}=\Gamma_{r}(\nu),
$$%\end{equation}
where
\begin{equation}\label{3333}
\nu(t)=\frac{t^{r+r^{\prime}-1}\int_0^t\overline{w}(s)ds \int_t^\infty s^{-r^{\prime}}\overline{w}(s)ds}
{\left(\int_0^t\overline{w}(s)ds + t^{r^{\prime}}\int_t^\infty s^{-r^{\prime}}\overline{w}(s)ds\right)^{r+1}}\quad \mbox{for all } t>0.
\end{equation}
Consequently, $S_r{(wv_{mr,n})}\hookrightarrow \Gamma_{r}(\nu)$. Hence, for all $t>0$ and
$f \in
S_r{(wv_{mr,n})},$
%S_r(t^{-\frac {mr}{n}}w),$
$$
K(f,t; \Gamma_{r}(\nu), W^k \Gamma_{r}(\nu))
\lesssim
K(f,t; S_r(wv_{mr,n}), W^k S_r(wv_{mr,n})).% \quad \mbox{for all } t>0.
$$
Thus, using Theorem \ref{Sh_Uly_Ineq} (together with Remark  \ref{remark---}), the facts that
$\Gamma_r(\nu)$ and $\Lambda_r(w)$  are r.i. spaces and that, for all $t>0$,
$$K_0(f,t^k; \Gamma_{r}(\nu), W^k \Gamma_{r}(\nu))\approx\omega_{k}(f,t)_{\Gamma_r(\nu)}$$
and
$$K_0(f,t^{k+m}; \Lambda_r(w), W^{k+m} \Lambda_r(w)) \approx\omega_{k+m}(f,t)_{\Lambda_r(w)}$$
(cf. (\ref{mod-equi})), we arrive at the following
result.

\begin{corollary}\label{--cor}
Let  $ k,m, n\in {\mathbb N}$,
$k+m< n$, and $1<r<\infty$.
Suppose $w\in B_r\cap B_{\frac{r(k+m-1)}{n}}^*$ satisfies \eqref{5.3*} and
$\nu$ is given by {\rm(\ref{3333})};
 %$1<p<\frac{n}{k}$. Suppose that {\bf ???} $w\in B_r\cap B_{\frac{n}{k+m}}^{\ast}$.
then
\begin{equation}\label{150}
 \omega_{k}(f,t)_{\Gamma_r(\nu)} \lesssim
\left(\int_0^{t}\left(s^{-m}\omega_{k+m}(f,s)_{\Lambda_r(w)}\right)^r\frac{ds}{s}\right)^{\frac1r}
%\quad \mbox{for all } t>0\ \mbox{and }f\in \Gamma_r(\nu),
\end{equation}
for all $\,t>0$ and $f \in \Lambda_r(w)$ {\rm (}for which ${\rm RHS}\eqref{150}$ is finite{\rm )}. Equivalently {\rm (}see Lemma~\ref{newlemma}, Part 1{\rm )},
 % {\footnote{Note that $\Lambda_r(w)=\Gamma_r(w)$ for $w\in B_r$. }}
 we have
$$%\begin{equation}\label{150}
 \omega_{k}(f,t)_{\Gamma_r(\nu)} \lesssim
\left(\int_0^{t}\left(s^{-m}\omega_{k+m}(f,s)_{\Gamma_r(w)}%_{\Lambda_r(w)}
\right)^r\frac{ds}{s}\right)^{\frac1r}.
%\quad \mbox{for all } t>0\ \mbox{and }f\in \Gamma_r(\nu),
$$%\end{equation}
 %\footnote{S: I added this formula}
%where  $\nu$ is given by {\rm(\ref{3333})}.
\end{corollary}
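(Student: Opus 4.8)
The plan is to deduce Corollary~\ref{--cor} from Theorem~\ref{Sh_Uly_Ineq} by inserting, on one side, the embedding $S_r(wv_{mr,n})\hookrightarrow\Gamma_r(\nu)$ established in the discussion preceding the statement, and, on both sides, the equivalence \eqref{mod-equi} between $K_0$-functionals and moduli of smoothness on r.i.\ spaces.

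First I would check that Theorem~\ref{Sh_Uly_Ineq} is applicable: by Lemma~\ref{newlemma} the hypothesis $w\in B_r\cap B_{r(k+m-1)/n}^*$ implies $w\in B_r\cap B_\infty^*$, and since $w\in B_r$ the space $\Lambda_r(w)$ is r.i.\ (Lemma~\ref{newlemma}, Part~1). By Remark~\ref{remark---}, estimate \eqref{004} continues to hold with the $K$-functionals replaced by the $K_0$-functionals.

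Next I would invoke the associate-space computation recalled above: condition \eqref{5.3*} allows \cite[Theorem~4.1]{cgmp} to give $(S_r(wv_{mr,n}))'=\Gamma_{r'}(\overline w)$, and then \cite[Theorem~A]{gk} yields $(S_r(wv_{mr,n}))''=(\Gamma_{r'}(\overline w))'=\Gamma_r(\nu)$; hence $S_r(wv_{mr,n})\hookrightarrow(S_r(wv_{mr,n}))''=\Gamma_r(\nu)$. Because this embedding lifts to $W^kS_r(wv_{mr,n})\hookrightarrow W^k\Gamma_r(\nu)$, the $K_0$-functional is monotone in the underlying couple, so
\[
K_0(f,t^k;\Gamma_r(\nu),W^k\Gamma_r(\nu))\lesssim K_0(f,t^k;S_r(wv_{mr,n}),W^kS_r(wv_{mr,n}))
\]
for all $t>0$ and every admissible $f$. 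Chaining this with the $K_0$-version of \eqref{004} already yields \eqref{150} at the level of $K_0$-functionals.

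Finally I would translate back to moduli of smoothness: $\Gamma_r(\nu)$ is an r.i.\ space for every $1\le r<\infty$, and $\Lambda_r(w)$ is r.i.\ since $w\in B_r$, so \eqref{mod-equi} gives $K_0(f,t^k;\Gamma_r(\nu),W^k\Gamma_r(\nu))\approx\omega_k(f,t)_{\Gamma_r(\nu)}$ and $K_0(f,s^{k+m};\Lambda_r(w),W^{k+m}\Lambda_r(w))\approx\omega_{k+m}(f,s)_{\Lambda_r(w)}$. Substituting these into the previous display reproduces \eqref{150} verbatim — no change of variables is required, since Theorem~\ref{Sh_Uly_Ineq} is already stated in the matching form — and the ``equivalent'' version then follows from $\Lambda_r(w)=\Gamma_r(w)$, which holds because $w\in B_r$ (Lemma~\ref{newlemma}, Part~1, $(a)\Leftrightarrow(e)$). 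The only point needing a word of care, and it is a mild one, is that the finiteness proviso in \eqref{150} must be interpreted as in Theorem~\ref{Sh_Uly_Ineq}: finiteness of $\mathrm{RHS}\eqref{150}$ together with $f\in\Lambda_r(w)$ places $f$ in the domains of all the $K_0$-functionals involved. This causes no difficulty, since every implied constant is independent of $f$ and the right-hand sides of \eqref{004} and \eqref{150} coincide up to the equivalence constant in \eqref{mod-equi}; thus no genuine obstacle arises — the argument is an assembly of results already proved in the excerpt.
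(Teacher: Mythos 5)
Your proposal is correct and follows the paper's own argument: the associate-space computation via \cite[Theorem~4.1]{cgmp} and \cite[Theorem~A]{gk} gives $S_r(wv_{mr,n})\hookrightarrow\Gamma_r(\nu)$, which is combined with Theorem~\ref{Sh_Uly_Ineq} (in its $K_0$-form from Remark~\ref{remark---}) and the equivalence \eqref{mod-equi}, exactly as in the discussion preceding the statement. The minor elaboration about lifting the embedding to the Sobolev scale and monotonicity of the $K_0$-functional is implicit in the paper's proof and is handled correctly.
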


%\subsection{Ulyanov's inequalities between the Lorentz-Karamata spaces}
As an important example, we obtain Ul'yanov's inequalities between the Lorentz-Karamata spaces.
To define the Lorentz-Karamata spaces $\, L_{p,r;b}
({\mathbb R}^n),\, 1\le p,r\le \infty,$
we introduce slowly varying functions.

\begin{definition}\label{def}
A measurable function $b :(0,\infty) \to (0,\infty)$ is
said to be {\it slowly varying} on $(0, \infty ),$ notation $b \in
SV(0,\infty )$ if, for each $\, \varepsilon >0,$ there
is a non-decreasing %increasing
function $\, g_\varepsilon $ and a non-increasing %decreasing
function $g_{-\varepsilon}$   such that $ \, t^\varepsilon b(t)
\approx g_\varepsilon (t)\, $ and
 $ \, t^{-\varepsilon} b(t) \approx g_{-\varepsilon} (t)$,  for
all $\, t \in (0,\infty ).$
\end{definition}

Clearly, %one has that
$\, \ell^\beta (\ell \circ \ell) ^\gamma \in SV(0,\infty
),$ etc., where $\, \beta,\, \gamma \in {\mathbb R}$ and $\, \ell (t)=
(1+ |\log t |),\, t >0.$

\smallskip
{\bf Convention.}  For the sake of simplicity, in the
following we assume that $\, t^{\pm \varepsilon}b(t)$ are already monotone.

\subsection{The Ul'yanov inequality for Lorentz-Karamata spaces: a  first look.}

We introduce the
{\it Lorentz-Karamata space} $\, L_{p,r;b}({\mathbb R}^n),\, p,r \in
[1,\infty],\, b \in SV(0,\infty),$ as the set of all
measurable functions $\, f $ on $\, {\mathbb R}^n$ such that %the functional
$$\| f\|_{p,r;b}:=
%\left\{ \begin{array}{l@{\quad if \quad}l}
 \Big( \int_{0}^{\infty}[t^{1/p}b(t)f^*(t)]^r\frac{dt}{t}
\Big) ^{1/r}<\infty
$$
 %\[
%\| f\|_{p,r;B}:=
%\left\{ \begin{array}{l@{\quad if \quad}l}
% \Big( \int_{0}^{\infty}[t^{1/p}b(t)f^*(t)]^r\frac{dt}{t}
%\Big) ^{1/r} & 1 \le r < \infty,
%\\
%\sup _{t>0} t^{1/p}b(t)f^*(t)  &  r=\infty \\
%\end{array} \right.
%\]
(with the usual modification for $r=\infty$). % is finite.

 If  $$w(t)=t^{\frac rp-1}b^r(t),\quad 1< p<\infty,\quad 1\le r<\infty, \quad  b\in SV(0,\infty),$$ then  $\Lambda_r(w)=L_{p,r;b}$.
Let $1< r<\infty$.
First we note that the condition $1< p<\infty$  implies that $w\in B_r$. Moreover,
if $p<n/(k+m-1)$, then $w\in  B_{\frac{r(k+m-1)}{n}}^*$.
It is easy to see that the function given by (\ref{3333--}) satisfies
$$
\overline{w}(t)
\approx
t^{\frac{mr'}{n} +\frac{r'}{p'}-1}b^{-r'}(t)\qquad \mbox{for all }t>0.
$$
Furthermore, if $p<n/(k+m-1)$,
then
$$
%{\left(
\int_0^t\overline{w}(s)ds + t^{r^{\prime}}\int_t^\infty s^{-r^{\prime}}\overline{w}(s)ds
%\right)^{r+1}}
\approx t^{\frac{mr'}{n} +\frac{r'}{p'}}b^{-r'}(t)\quad \mbox{for all }t>0,
$$
%\left\{
%  \begin{array}{ll}
%    t^{\frac{mr'}{n} +\frac{r'}{p'}}b^{-r'}(t), & \hbox{\;$p<n/m$;} \\
%    t^{r'}
%\int_t^\infty b^{-r^{\prime}}(s)\frac{ds}s, % \right)^{-r^{\prime}},
%& \hbox{\;$p=n/m$.}
%  \end{array}
%\right.
%$$
which, together with  (\ref{3333}), implies that
$$\nu(t)\approx t^{r(\frac1p-\frac{m}n)-1}b^{r}(t)\qquad \mbox{for all }t>0,
$$
and
$$\Gamma_{r}(\nu)=L_{p^*,r;b}\quad\mbox{with} \quad \frac{1}{{p}^*}= \frac{1}{p}-\frac{m}{n}.
$$
 Therefore, by Corollary \ref{--cor}, for all $t>0$ and $f \in L_{p,r;b}$, %$f \in L_{{p}^*,r;b}$,
\[
\omega_{k}(f,t)_{L_{p^*,r;b}} \lesssim
\left(\int_0^{t}\left(u^{-m}\omega_{k+m}(f,u)_{L_{p,r;b}}\right)^r\frac{du}{u}\right)^{\frac1r}, \quad \mbox{where } \frac{1}{{p}^*}= \frac{1}{p}-\frac{m}{n}.
\]
Using the estimate $\omega_{k+m}(f,u)_{L_{p,r;b}}\lesssim \omega_{k+m}(f,u)_{L_{p,\overline{r};b}}$ with $\overline{r}\le r$,
we  immediately obtain the following corollary.
\begin{corollary}\label{cor-ry}
If  $ k,m, n\in {\mathbb N}$, $k+m< n$,  $1<
p<n/(k+m-1)$, $1<\overline{r}\le r<\infty$,
 $b\in SV(0,\infty)$, and $1/p^*=1/p-m/n,$ then
\begin{equation}\label{vsp--}
 \omega_{k}(f,t)_{L_{p^*,r; b}} \lesssim
\left(\int_0^{t}\left(u^{-m}\omega_{k+m}(f,u)_{L_{p,\overline{r};b}}\right)^r\frac{du}{u}\right)^{\frac1r} %\quad\mbox{where}\quad \frac{1}{{p}^*}= \frac{1}{p}-\frac{m}{n}.
\end{equation}
for all $\,t>0$ and $f \in L_{p,\overline{r};b}$ {\rm (}for which ${\rm RHS}\eqref{vsp--}$ is finite{\rm )}.
\end{corollary}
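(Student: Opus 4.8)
The plan is to obtain Corollary~\ref{cor-ry} as a special case of Corollary~\ref{--cor}, applied to the weight $w(t):=t^{\frac rp-1}b^r(t)$, $t>0$, for which the definition of the Lorentz--Karamata norm immediately gives $\Lambda_r(w)=L_{p,r;b}$. First I would check that this particular $w$ meets all the hypotheses of Corollary~\ref{--cor}. The inclusion $w\in B_r$ follows from $1<p<\infty$ together with the slow variation of $b$; equivalently, by Lemma~\ref{newlemma}, Part~1, it is the statement $\overline{\alpha}_{L_{p,r;b}}<1$, which holds because this Boyd index equals $1/p$. The condition $w\in B^*_{r(k+m-1)/n}$ is, by Lemma~\ref{newlemma}, Part~2 (taken with $\eta=(k+m-1)/n$), exactly the hypothesis $p<n/(k+m-1)$, since it amounts to $(k+m-1)/n<\underline{\alpha}_{L_{p,r;b}}=1/p$. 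Finally, $\int_0^\infty t^{-\frac{mr}{n}-r}w(t)\,dt=\int_0^\infty t^{r(\frac1p-\frac mn-1)-1}b^r(t)\,dt$ diverges at $0$ because the exponent $r(\frac1p-\frac mn-1)$ is strictly negative and the slowly varying factor does not affect this, so \eqref{5.3*} holds too.

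Next I would carry out the explicit computation of the weights $\overline w$ and $\nu$ from \eqref{3333--} and \eqref{3333}; these are precisely the asymptotic identities recorded in the paragraph just before the corollary, and each rests only on the elementary fact that, for a slowly varying $b$ and a genuinely negative exponent $-a$, one has $\int_t^\infty s^{-a-1}b^r(s)\,ds\approx t^{-a}b^r(t)$ and, dually, $\int_0^t s^{c-1}b^r(s)\,ds\approx t^{c}b^r(t)$ for $c>0$. Tracking the exponents $1/p$, $1/p'$, $m/n$, $mr'/n$ one finds successively
\[
\overline w(t)\approx t^{\frac{mr'}{n}+\frac{r'}{p'}-1}b^{-r'}(t),
\]
then, using $p<n/m$ (which is implied by $p<n/(k+m-1)$),
\[
\int_0^t\overline w(s)\,ds+t^{r'}\!\int_t^\infty s^{-r'}\overline w(s)\,ds\approx t^{\frac{mr'}{n}+\frac{r'}{p'}}b^{-r'}(t),
\]
and finally, substituting into \eqref{3333},
\[
\nu(t)\approx t^{r(\frac1p-\frac mn)-1}b^r(t)=t^{\frac r{p^*}-1}b^r(t),
\]
so that $\Gamma_r(\nu)=L_{p^*,r;b}$ with $1/p^*=1/p-m/n$. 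Corollary~\ref{--cor} then gives
\[
\omega_k(f,t)_{L_{p^*,r;b}}\lesssim\left(\int_0^t\big(s^{-m}\omega_{k+m}(f,s)_{L_{p,r;b}}\big)^r\frac{ds}{s}\right)^{1/r}.
\]

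It remains only to lower the second Lorentz index on the right. Since $\overline r\le r$ gives the nesting $L_{p,\overline r;b}\hookrightarrow L_{p,r;b}$, we have $\omega_{k+m}(f,u)_{L_{p,r;b}}\lesssim\omega_{k+m}(f,u)_{L_{p,\overline r;b}}$ for every $u>0$; inserting this bound into the previous display produces \eqref{vsp--}. Here the hypothesis $\overline r>1$ serves only to make $L_{p,\overline r;b}$ a genuine r.i.\ Banach function space, so that the modulus of smoothness on the right is covered by the $K$-functional equivalence \eqref{mod-equi}. I do not expect a real obstacle: Corollary~\ref{--cor} carries all the analytic content, and the only genuinely technical point is the three-step asymptotic evaluation of $\overline w$ and $\nu$, where one has to keep careful track of the exponents so that $r(1/p-m/n)-1$ emerges as the exponent defining $\nu$ and to check that every power appearing under an integral near $0$ or $\infty$ has the sign that renders the slowly varying factor harmless.
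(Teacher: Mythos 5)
Your proposal is correct and reproduces the paper's own argument: take $w(t)=t^{r/p-1}b^r(t)$ so that $\Lambda_r(w)=L_{p,r;b}$, verify the $B_r$ and $B^*_{r(k+m-1)/n}$ conditions via the Boyd indices of $L_{p,r;b}$ (Lemma~\ref{newlemma}), compute $\overline w$ and $\nu$ asymptotically to identify $\Gamma_r(\nu)=L_{p^*,r;b}$, apply Corollary~\ref{--cor}, and then lower the inner Lorentz index with the nesting $L_{p,\overline r;b}\hookrightarrow L_{p,r;b}$. Your explicit check of the divergence condition \eqref{5.3*} is a small detail the paper leaves implicit, but otherwise the two proofs coincide.
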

In particular, if $b\equiv 1$, then  \eqref{vsp--} yields the known estimate (\ref{ulya})   for integer   parameters
$k$ and $m$
satisfying  $k+m<n$.
Note that the restriction $\overline{r}\le r$ is natural since  (\ref{vsp--}) does not hold in general for $\overline{r}> r$, see \cite[Theorem~1.1(iii)]{gott}.%\footnote{S. new}

In the next section we will investigate inequalities of type (\ref{vsp--})  in more details.

\section{Sharp Ul'yanov inequality between the  Lorentz--Karamata spaces}\label{section6}
%\vskip 0.3cm
In the previous section we
obtained
the Ul'yanov-type inequalities for $K$-functionals and moduli of smoothness between
the general weighted Lorentz spaces, which causes restrictions on the parameters. In particular,
we assumed that $k, m\in \mathbb N$.
On the other hand,
it is clear  that, when dealing with more specific Lorentz spaces, one could get better results,
i.e., sharp Ul'yanov inequalities for a wider range of parameters. %In this section we study such a result for  Lorentz--Karamata spaces.

%... only $K$-functionals and moduli of smoothness of integer order....
%$\Gamma_r(\nu)$ is the r.i. space for $r>1$ and
%$K(f,t^k; \Gamma_{r}(\nu), W^k \Gamma_{r}(\nu))
%\approx
%\omega_{k}(f,t)_{\Gamma_r(\nu)}$
%(see (\ref{mod-equi}), we arrive at the following

Our main goal in this section is to establish new sharp Ul'yanov inequalities between the
Lorentz-Karamata spaces % $\, L_{p,r;b}({\mathbb R}^n)$
introduced in the previous subsection.
%Here $\Omega=\R^n$ or $\T^n$. {\bf ???}

\medskip
First we mention some simple properties of slowly
varying functions (recall that slowly varying functions have been introduced in Definition \ref{def} at the end of Subsection~5.2). In what follows we write only $SV$ instead of $SV(0,\infty)$.
%We start with several auxiliary results.
%We shall use  some simple properties (cf. \cite{got}) of slowly
%varying functions given  in the following lemma.
\begin{lemma} {\rm (cf. \cite [Prop.~2.2]{got})} \label{l2.6+}
Let $b,b_1,b_2 \in SV.$  %  \ \ \footnotemark\footnotetext{\ \
%Recall that the set $SV=SV(0,\infty)$ has been introduced in Dfinition \ref{def}.}

\noindent
{\rm (i)} Then $\, b_1 b_2 \in SV,\; b^r \in SV$
and $\, b(t^r) \in SV$ for each $r \in {\mathbb R}.
$ %\\[1mm]

\noindent
{\rm (ii)}  If $\varepsilon$ and $\kappa$ are positive numbers,
then
there are positive constants $c_\varepsilon$ and $C_\varepsilon $ \\
\hspace*{.8cm}such that
\[
c_\varepsilon \min \{ \kappa^{-\varepsilon}, \kappa^\varepsilon \}
b(t) \le b(\kappa t) \le C_\varepsilon \max \{ \kappa^\varepsilon,
\kappa^{-\varepsilon} \} b(t) \quad \mbox{for every }\, t>0 .
\]%\\ [1mm]

\noindent
{\rm (iii)} If $\alpha>0$ and $q \in (0,\infty]$, then, for all $t>0,$
\[
\|\tau^{\alpha-1/q} b(\tau)\|_{q,(0,t)} \approx t^\alpha b(t) \quad \mbox{and } \quad
\|\tau^{-\alpha-1/q} b(\tau)\|_{q,(t,\infty)} \approx t^{-\alpha} b(t). \ \
%\footnotemark\footnotetext{\ \
%If $(a,b)\subset \mathbb R$ and $q \in (0,\infty]$, then the symbol $\|\cdot\|_{q,(a,b)}$ stands for the quasinorm
%in the Lebesgue space $L_q((a,b)).$}
\]
\end{lemma}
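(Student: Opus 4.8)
The plan is to handle the three parts in turn, each time unwinding the definition: for any $\varepsilon>0$ we have $t^{\varepsilon}b(t)\approx g_{\varepsilon}(t)$ with $g_{\varepsilon}$ non-decreasing and $t^{-\varepsilon}b(t)\approx g_{-\varepsilon}(t)$ with $g_{-\varepsilon}$ non-increasing, and by the Convention stated after Definition~\ref{def} we may assume $t^{\pm\varepsilon}b(t)$ themselves monotone. For part~(i), fix $\varepsilon>0$. Writing $t^{\varepsilon}b_{1}(t)b_{2}(t)=\bigl(t^{\varepsilon/2}b_{1}(t)\bigr)\bigl(t^{\varepsilon/2}b_{2}(t)\bigr)$ exhibits it as a product of two positive non-decreasing functions, hence non-decreasing, and symmetrically for $t^{-\varepsilon}b_{1}b_{2}$; thus $b_{1}b_{2}\in SV$. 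Since $t^{\varepsilon}b(t)^{-1}=\bigl(t^{-\varepsilon}b(t)\bigr)^{-1}$ is the reciprocal of a positive non-increasing function, hence non-decreasing (and dually for $t^{-\varepsilon}b(t)^{-1}$), we get $b^{-1}\in SV$; then for $r>0$, $t^{\varepsilon}b(t)^{r}=\bigl(t^{\varepsilon/r}b(t)\bigr)^{r}$ is non-decreasing and $t^{-\varepsilon}b(t)^{r}=\bigl(t^{-\varepsilon/r}b(t)\bigr)^{r}$ non-increasing, giving $b^{r}\in SV$, while the case $r<0$ reduces to $(b^{-1})^{|r|}$ and $r=0$ is trivial. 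Finally, for $b(t^{r})$ with $r>0$ we have $t^{\varepsilon}b(t^{r})\approx g_{\varepsilon/r}(t^{r})$, a non-decreasing function composed with the increasing map $t\mapsto t^{r}$; for $r<0$, $t^{\varepsilon}b(t^{r})\approx g_{-\varepsilon/|r|}(t^{r})$ is a non-increasing function composed with the decreasing map $t\mapsto t^{r}$, hence non-decreasing; the other product is treated the same way and $r=0$ is trivial.

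For part~(ii), fix $\varepsilon>0$ and consider $\kappa\ge1$ (the case $\kappa<1$ being symmetric). Then $\kappa t\ge t$, so monotonicity of $g_{-\varepsilon}$ gives $(\kappa t)^{-\varepsilon}b(\kappa t)\lesssim t^{-\varepsilon}b(t)$, i.e. $b(\kappa t)\lesssim\kappa^{\varepsilon}b(t)$, and monotonicity of $g_{\varepsilon}$ gives $t^{\varepsilon}b(t)\lesssim(\kappa t)^{\varepsilon}b(\kappa t)$, i.e. $b(\kappa t)\gtrsim\kappa^{-\varepsilon}b(t)$; since $\kappa\ge1$ these are precisely the asserted bounds with $\max\{\kappa^{\varepsilon},\kappa^{-\varepsilon}\}=\kappa^{\varepsilon}$ and $\min\{\kappa^{\varepsilon},\kappa^{-\varepsilon}\}=\kappa^{-\varepsilon}$.

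For part~(iii), choose $\varepsilon\in(0,\alpha)$ and use $b(\tau)\approx\tau^{-\varepsilon}g_{\varepsilon}(\tau)$ with $g_{\varepsilon}$ non-decreasing. On $(0,t)$, $\tau^{\alpha-1/q}b(\tau)\approx\tau^{\alpha-\varepsilon-1/q}g_{\varepsilon}(\tau)\le g_{\varepsilon}(t)\,\tau^{\alpha-\varepsilon-1/q}$, and $\|\tau^{\alpha-\varepsilon-1/q}\|_{q,(0,t)}\approx t^{\alpha-\varepsilon}$ since $\alpha-\varepsilon>0$ (with the usual modification for $q=\infty$), whence the upper bound $\|\tau^{\alpha-1/q}b(\tau)\|_{q,(0,t)}\lesssim g_{\varepsilon}(t)t^{\alpha-\varepsilon}\approx t^{\alpha}b(t)$; restricting the norm to $(t/2,t)$, where part~(ii) gives $b(\tau)\approx b(t)$ and $\tau^{\alpha-1/q}\approx t^{\alpha-1/q}$, yields the matching lower bound $\gtrsim t^{\alpha-1/q}b(t)\,\|1\|_{q,(t/2,t)}\approx t^{\alpha}b(t)$. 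The second equivalence is proved identically, now with $b(\tau)\approx\tau^{\varepsilon}g_{-\varepsilon}(\tau)$, $g_{-\varepsilon}$ non-increasing, integrating over $(t,\infty)$ for the upper estimate and over $(t,2t)$ for the lower one.

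The argument is essentially routine; the only step needing a little care is the choice $\varepsilon<\alpha$ in part~(iii), which keeps the power $\tau^{\alpha-\varepsilon-1/q}$ (respectively $\tau^{-\alpha+\varepsilon-1/q}$) integrable near the relevant endpoint and lets the slowly varying factor be pulled out of the $L_{q}$-norm, together with the harmless bookkeeping of the $q=\infty$ cases throughout.
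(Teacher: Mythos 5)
Your proof is correct. The paper itself does not prove this lemma but delegates it to the cited reference \cite[Prop.~2.2]{got}; your argument works directly and carefully from Definition~\ref{def} (products, reciprocals, powers, and compositions for part~(i), monotonicity of the regularized functions for part~(ii), and the extraction of the slowly varying factor with $\varepsilon<\alpha$ for part~(iii)) and is the standard way to establish these facts. The only worthwhile observation is that part~(ii) is in fact a consequence of part~(iii) with $q=\infty$ applied on $(0,\kappa t)$ and $(\kappa t,\infty)$, but your direct argument is just as clean and arguably more transparent.
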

%Now we introduce the subclass $\, SV_{\uparrow}$ of increasing
%slowly  varying functions by
%{\it
%\begin{equation}\label{2.7-}
% SV_{\uparrow}:= \{ b \in SV: b \mbox{ is increasing,}\; \lim _{t
%\to \infty}b(t)=\infty,\;  b(t) \gtrsim 1 %\ge c>0 \,
% \mbox{ for all }\, t>0\}.
%\end{equation}
%}
 %Moreover, throughout this section  we assume that
%{\it
%\begin{equation}\label{2.7-}
% b \in SV(0,\infty) \; \mbox{ is increasing, }\; b(t) \ge c >0\; \mbox{ for
%all }\, t>0,  \mbox{ and }  \; t^{-\varepsilon} b(t),\, \varepsilon
%>0, \mbox{ is  decreasing.}
%\end{equation}
%}
%{\it
%\begin{equation}\label{2.7-}
%\begin{cases}
%\ b \in SV(0,\infty) \; \mbox{ is increasing, }\\
%\  b(t) \ge c >0\; \mbox{ for all }\, t>0,  \\
%\ t^{-\varepsilon} b(t),\, \varepsilon >0, \mbox{ is  decreasing.}\\
%\end{cases}
%\end{equation}
%}
%The modulus of smoothness  $\omega_{\kappa} (f,\delta)_{L_{p,r;B}}$  of fractional
%order $\kappa >0$ is defined as usual by
%$\omega_{\kappa} (f,\delta)_{L_{p,r;B}}
% =\sup_{|h|\le \delta }
% \left\|
%\Delta_{h}^{\kappa} f(x) \right\|_{L_{p,r;B}({\mathbb R}^n)}
%$ (cf. \eqref{fracdif}).
%We put $H^\lambda_{p,r;\, B}:=H^\lambda L_{p,r;\, B}, \lambda\ge 0,$
%and also use the estimate
%\begin{equation}\label{1.3}
%\omega_{\lambda}(f,t)_{p,r;B} \approx
%K(f,t^\lambda;L_{p,r;B},H^\lambda_{p,r;\, B}),\qquad 1<p<\infty \  \text{ and }\ \lambda>0.
%\end{equation}

% We start by showing
% that the functions $\, b \in SV_{\uparrow}$ and $\, b^{-1}:=1/b$ have
%comparable, sufficiently smooth   regularizations. \\

\medskip
If $\, b \in SV$, then also $\, b^{-1}:=1/b \in SV$. We will show that these functions have comparable,
sufficiently smooth regularizations:

(a) Given $N\in {\mathbb N}$, following \cite[Lemma~6.3]{guop}, we set
%(a) Since clearly  $\, b^{-1}$ is decreasing, following \cite[(3.32)]{guop}, we set
%\begin{equation}\label{2.7-+}
%\[
\begin{equation}\label{2.6,5}
a_0(t):= b^{-1}(t)\equiv \frac{1}{b(t)}\, ,\qquad
a_\ell(t) = \frac{1}{t}\int_{0}^{t}
a_{\ell-1}(u)\, du\, ,\qquad  t>0, \ \ \ell \in {\mathbb N},\, 1 \le \ell \le N.
%\]
\end{equation}
Then, by direct computation, we obtain, for all $\, t>0$ and $\, \ell\in {\mathbb N},
\;  1 \le \ell \le N,$ that
\begin{equation}\label{2.7}
a_\ell (t) \approx  b^{-1}(t) \quad \mbox{ and } \quad
a_\ell'(t) = - \frac{1}{t}[a_\ell(t) - a_{\ell-1}(t)],
\end{equation}
and hence, for all $\, t>0$ and $\, j, \ell\in {\mathbb N}, \, 1 \le j\le \ell \le N,$
\begin{equation}\label{2.7-++}
|a_\ell^{(j)}(t)|= |t^{-j}\sum_{k=0}^{j}C_{j,k}\, a_{\ell-k}(t)| \lesssim
t^{-j}\, b^{-1}(t) \\
\end{equation}
(with some constants $C_{j,k}$).% are appropriate constants).

(b) Analogously, given $N\in {\mathbb N}$, %following \cite[(3.32){\bf ?}]{guop},
we define
%\begin{equation}\label{2.8-+}
%\[
\begin{equation}\label{2.6,6}
c_0(t):= b(t)\, , \qquad c_\ell(t) = t\int_{t}^{\infty}
\frac{c_{\ell-1}(u)}{u^2}\, du\, , \qquad t>0,\ \ \ell \in {\mathbb N},\, 1 \le \ell \le N,
%\]
\end{equation}
to obtain, for all $\, t>0$ and $\, \ell\in {\mathbb N},
\;  1 \le \ell \le N,$
\begin{equation}\label{2.8}
c_\ell(t) \approx  b(t)  \quad \mbox{ and } \quad c_\ell'(t)=  \frac{1}{t}
[c_\ell(t) - c_{\ell-1}(t)],
\end{equation}
and hence, for all $\, t>0$ and $\, j, \ell \in {\mathbb N}, \, 1 \le j\le \ell \le N,$
\begin{equation}\label{2.8-++}
|c_\ell^{(j)}(t)| = |t^{-j} \sum_{k=0}^{j} D_{j,k}\, c_{\ell-k}(t)| \lesssim
t^{-j}\, b(t)
\end{equation}
(with some constants $D_{j,k}$).% are appropriate constants).

Now we introduce the subclass $\, SV_{\uparrow}$ of non-decreasing
slowly  varying functions by
{\it
\begin{equation}\label{2.7-}
 SV_{\uparrow}
 %SV_{\nearrow}
:= \{ b \in SV: b \mbox{ is non-decreasing,}\; \lim _{t
\to \infty}b(t)=\infty,\; \lim _{t
\to 0_+}b(t)>0\}, %b(t) \gtrsim 1 %\ge c>0 \, \mbox{ for all }\, t>0\}.
\end{equation}}
and extend the {\it classical Riesz potential}
\[
I^\sigma f : = k_\sigma *f, \quad \mbox{ where }\quad k_\sigma(x) : =
{\mathcal F}^{-1} [ |\xi|^{-\sigma}](x),\quad 0 < \sigma < n,
\]
to a {\it fractional integration with slowly varying component} %
$\, b^{-1},$ where $b \in SV_\uparrow \, .$ To this end, if the slowly varying function $\, a_N,\; N \in
{\mathbb N},$ is given by (6.1),  set
%\bigskip
%\bigskip
%Given $\, b \in SV_{\uparrow}$ and $n, N\in {\mathbb N},$
%we define a {\it fractional integration with slowly varying component} by
\[
I_N^{\sigma,b^{-1}} f:= k_{\sigma ,b^{-1};N}*f,\quad \mbox{ where }\quad
k_{\sigma
,b^{-1};N}:= {\mathcal F}^{-1}[|\xi|^{-\sigma}a_N(|\xi|)](x),\; \;  0<
\sigma <n.
\]
%where the slowly varying function $a_N$ is given by \eqref{2.6,5}.
When we choose
$\, N >(n+1)/2,$ we can apply  the formula
\[
|{\mathcal F}^{-1}[m(|\xi|^2)](x)| \lesssim \! \int_{0}^{|x|^{-2}}\!\!\!
t^{N-1+n/2} |m^{(N)}(t)|\, dt + |x|^{-N-(n-1)/2} \!\int_{|x|^{-2}}^{\infty}
\!\!\!  t^{N/2 +(n-3)/4} |m^{(N)}(t)|\, dt,
\]
 contained in \cite{tre3}, with $\,
m(t)=t^{-\sigma/2}a_N(\sqrt{t}).$ To this end, observe that
\[
m^{(N)}(t)= \sum_{\ell =0}^{N} t^{-N+\ell -\sigma/2 }\sum_{k=0}^{\ell}
c_{k,\ell,N} \,  a_N^{(k)}(\sqrt{t}) / (t^{\ell/2} t^{(\ell -k)/2})
\]
(with some constants $c_{k,\ell,N}$)
which, by (\ref{2.7-++}), implies that $\, | m^{(N)}(t)| \lesssim
t^{-N-\sigma/2} b^{-1}(\sqrt{t}),$  and hence for
all $\, x \not= 0,$
\[
|k_{\sigma ,b^{-1};N}(x)| \lesssim
\int_{0}^{|x|^{-2}} t^{N-1 +n/2} t^{-N-\sigma/2} b^{-1}(\sqrt{t})\, dt
\qquad \quad \qquad \quad \qquad \quad \qquad \quad \qquad \quad
\qquad \quad
\]
\[
\qquad\qquad\qquad + |x|^{-N-(n-1)/2}
\int_{|x|^{-2}}^{\infty} t^{N/2+ (n-3)/4}t^{-N-\sigma/2}b^{-1}(\sqrt{t})
\, dt \lesssim |x|^{\sigma -n} b^{-1}(|x|^{-1}).
\]
Consequently,
%This implies
%Now use (\ref{2.7}) and the facts that  $\, a_{N-\ell} \in SV(0,\infty),
%\, a_{N-\ell} \approx 1/b,$  to obtain
\[
 k_{\sigma ,b^{-1};N}^*(t)
\lesssim k_{\sigma ,b^{-1};N}^{**}(t)
\lesssim  t^{\sigma /n -1} b^{-1}(t^{-1/n})\quad \mbox{for all } t>0\,.
\]
Therefore, the proof of \cite[Theorem~4.6]{edop} can be taken over
to get the following  analog of  a {\it fractional integration} theorem.

\begin{lemma} \label{l2.7}
Let $\, 1<p <\infty,\,  0<\sigma < n/p,\, 1/p^* = 1/p - \sigma /n, \,
1\le r \le s \le \infty ,$  and $ \, B \in SV, b \in SV_{\uparrow}$. If $N\in {\mathbb N}, \  N >(n+1)/2,$ and
\begin{equation}\label{2.8-}
b_n(t):= b^{-1}(t^{-1/n}) \quad \text {for all } \ t>0,
\end{equation}
then
\[
 \| I^{\sigma,b^{-1}}_N f \| _{p^*,s;B} \lesssim \|\,
f\|_{p,r;b_nB} \;
\; \mbox{ for all } \; f \in L_{p,r;b_nB}({\mathbb R}^n)\, .
\]
\end{lemma}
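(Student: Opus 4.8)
The plan is to prove this by the classical rearrangement argument for Hardy--Littlewood--Sobolev (fractional integration) estimates; it is in fact precisely the argument in the proof of \cite[Theorem~4.6]{edop}, the only new ingredient being the rearrangement bound
\[
k^{*}_{\sigma,b^{-1};N}(t)\le k^{**}_{\sigma,b^{-1};N}(t)\lesssim t^{\sigma/n-1}b^{-1}(t^{-1/n}),\qquad t>0,
\]
for the convolution kernel, which has been established just above. Since $L_{p,r;b_nB}\hookrightarrow L_{p,s;b_nB}$ for $r\le s$ (a well-known embedding, valid because $1/p>0$), it suffices to treat the case $r=s$. First I would apply O'Neil's classical inequality for rearrangements of convolutions, $h^{**}(t)\le t\,k^{**}(t)f^{**}(t)+\int_{t}^{\infty}k^{*}(s)f^{*}(s)\,ds$ for $h=k*f$, to $h=I^{\sigma,b^{-1}}_Nf$; inserting the kernel bound and using $\sigma/n+1/p^{*}=1/p$ and $b_n(t)=b^{-1}(t^{-1/n})$ this gives
\[
\bigl(I^{\sigma,b^{-1}}_Nf\bigr)^{**}(t)\lesssim t^{\sigma/n}b_n(t)\,f^{**}(t)+\int_{t}^{\infty}s^{\sigma/n-1}b^{-1}(s^{-1/n})\,f^{*}(s)\,ds=:\mathrm{(I)}(t)+\mathrm{(II)}(t).
\]

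Since $(I^{\sigma,b^{-1}}_Nf)^{*}\le(I^{\sigma,b^{-1}}_Nf)^{**}$ and the $L^{s}$-norm is subadditive, by the definition of $\|\cdot\|_{p^{*},s;B}$ the claim reduces to the two estimates $\bigl\|t^{1/p^{*}}B(t)\,\mathrm{(I)}(t)\bigr\|_{L^{s}((0,\infty);\,dt/t)}\lesssim\|f\|_{p,s;b_nB}$ and the same with $\mathrm{(II)}$ in place of $\mathrm{(I)}$. Both are weighted Hardy inequalities for power weights multiplied by slowly varying factors. For $\mathrm{(I)}$ one has $t^{1/p^{*}}B(t)\mathrm{(I)}(t)=t^{1/p}B(t)b_n(t)\,(Pf^{*})(t)$, where $P$ is the averaging operator; since $Bb_n\in SV$ (Lemma~\ref{l2.6+}(i)) and $1/p>0$, the operator $P$ is bounded on $L^{s}\bigl((0,\infty);\,t^{s/p}(Bb_n)^{s}\,dt/t\bigr)$, whence this term is $\lesssim\|t^{1/p}B(t)b_n(t)f^{*}(t)\|_{L^{s}(dt/t)}=\|f\|_{p,s;b_nB}$. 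For $\mathrm{(II)}$, using $s^{\sigma/n-1}b^{-1}(s^{-1/n})=s^{-1/p^{*}-1}\bigl(s^{1/p}b_n(s)B(s)\bigr)/B(s)$ one sees that $\bigl\|t^{1/p^{*}}B(t)\,\mathrm{(II)}(t)\bigr\|_{L^{s}(dt/t)}$ is the left-hand side of a weighted Hardy inequality for the operator $h\mapsto\int_{t}^{\infty}h(s)\,ds$ whose naturally weighted right-hand side is $\|s^{1/p}b_n(s)B(s)f^{*}(s)\|_{L^{s}(ds/s)}=\|f\|_{p,s;b_nB}$, and this inequality holds because, by Lemma~\ref{l2.6+}(iii) with $\alpha=1/p^{*}>0$, its Bradley--Maz'ya functional satisfies
\[
\Bigl(\int_{0}^{t}x^{s/p^{*}-1}B^{s}(x)\,dx\Bigr)^{1/s}\Bigl(\int_{t}^{\infty}x^{-s'/p^{*}-1}B^{-s'}(x)\,dx\Bigr)^{1/s'}\approx t^{1/p^{*}}B(t)\cdot t^{-1/p^{*}}B^{-1}(t)\approx 1
\]
uniformly in $t>0$ (with Minkowski's integral inequality used when $s=1$ and H\"older's inequality when $s=\infty$). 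Combining the two estimates yields the assertion.

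I do not expect a genuine obstacle: the scheme is the classical one, and the only mildly delicate points --- the endpoint cases $s\in\{1,\infty\}$ of the Hardy inequality for $\mathrm{(II)}$ and the routine bookkeeping with the slowly varying factor $B$ --- are already carried out in \cite{edop}. Thus, once the kernel rearrangement estimate above is in hand, the proof is a verbatim transcription of that of \cite[Theorem~4.6]{edop}; alternatively, the statement may simply be quoted from there, since $k_{\sigma,b^{-1};N}$ has precisely the rearrangement behaviour of the kernel of the generalized Riesz potential treated in \cite{edop}.
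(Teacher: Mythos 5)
Your proposal is correct and takes essentially the same route as the paper: after establishing the kernel rearrangement bound $k_{\sigma,b^{-1};N}^{**}(t)\lesssim t^{\sigma/n-1}b^{-1}(t^{-1/n})$ (done just before the lemma), the paper simply cites \cite[Theorem~4.6]{edop}, which is exactly the O'Neil/Hardy-inequality argument you spell out. Your reduction to $r=s$ and your two Hardy estimates (with the Bradley condition checked via Lemma~\ref{l2.6+}(iii)) are precisely the contents of that cited proof.
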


\bigskip
The next lemma deals with a {\it Bernstein inequality} for
slowly varying   derivatives, based on the regularization of $\, b.$
Throughout  this section, given $R>0,$ we put
\[
B_R(0):= \{ \xi \in {\mathbb R}^n: |\xi | \le R \}
\]
and denote by $\chi$  a $C^\infty[0,\infty)$ -- function  such that
\begin{equation}\label{chi}
\chi (u)=1\quad \mbox{if } \quad\, 0 \le u \le 1\quad \mbox{and}
\quad\, \chi (u)=0 \quad \mbox{if } \; u \ge 2.
\end{equation}
%Then we can prove the following Bernstein-type inequality.

\begin{lemma} \label{l2.8}
Let $\, 1<p<\infty,\, 1 \le r \le \infty,$
and $\, g \in L_1({\mathbb R}^n)+L_\infty({\mathbb R}^n)$ with $\, \mbox{{\rm supp}}\, {\widehat g}\subset
B_R(0),\; R> 0.$ If  $\, B \in SV$, $\, b \in
SV_{\uparrow},$ and $N\in  {\mathbb N}, \  N >n/2,$  then
\[
\| {\mathcal F}^{-1}[c_N( |\xi|) {\widehat g}\, ]\,  \|
_{p,r;B} \lesssim  b(R) \,
 \| g\| _{p,r;B} \, \qquad \mbox{for all } R>0, %N\in  {\mathbb N}, \  N >n/2,
\]
where the slowly varying function $c_N$ is given by \eqref{2.6,6}.
\end{lemma}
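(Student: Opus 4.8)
The plan is to realize $g\mapsto \mathcal F^{-1}[c_N(|\cdot|)\widehat g]$, restricted to functions $g$ with $\operatorname{supp}\widehat g\subset B_R(0)$, as a Fourier multiplier operator whose symbol, after division by $b(R)$, ranges over a family of H\"ormander--Mikhlin multipliers with constants independent of $R$, and then to appeal to the boundedness of such operators on the Lorentz--Karamata space $L_{p,r;B}(\mathbb R^n)$. For the reduction, observe that since $\operatorname{supp}\widehat g\subset B_R(0)$ and, by \eqref{chi}, $\chi\equiv1$ on $[0,1]$, the function $\chi(|\xi|/R)$ equals $1$ on $\operatorname{supp}\widehat g$; hence
\[
\mathcal F^{-1}[c_N(|\cdot|)\widehat g]=T_{m_R}g,\qquad m_R(\xi):=c_N(|\xi|)\,\chi(|\xi|/R),
\]
where $m_R$ is supported in $\{\,|\xi|\le2R\,\}$. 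Set $\mu_R:=m_R/b(R)$.

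The core of the argument is to show that $\mu_R$ satisfies, uniformly in $R>0$, the Mikhlin-type bounds $|\partial^\alpha\mu_R(\xi)|\le C_\alpha|\xi|^{-|\alpha|}$ for $\xi\neq0$ and $|\alpha|\le N$. On $\operatorname{supp}m_R$ one differentiates the product: the derivatives landing on $c_N(|\cdot|)$ are controlled by \eqref{2.8-++}, which for a radial function gives $|\partial^\alpha[c_N(|\cdot|)](\xi)|\lesssim|\xi|^{-|\alpha|}b(|\xi|)$, while the derivatives landing on $\chi(|\cdot|/R)$ generate factors $R^{-j}\approx|\xi|^{-j}$, since $\chi^{(j)}$, $j\ge1$, is supported where $|\xi|\asymp R$. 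Thus $|\partial^\alpha m_R(\xi)|\lesssim|\xi|^{-|\alpha|}b(|\xi|)$ on the support. Here the hypothesis $b\in SV_{\uparrow}$ enters decisively: as $b$ is non-decreasing, $b(|\xi|)\le b(2R)$ whenever $|\xi|\le2R$, and $b(2R)\lesssim b(R)$ by Lemma~\ref{l2.6+}(ii); dividing through by $b(R)$ gives the asserted $R$-uniform bounds for $\mu_R$ (the symbol being zero for $|\xi|>2R$). This is precisely the reason for regularizing $b$ to $c_N$: the latter is $N$ times differentiable with derivatives of the right size, whereas $b$ need only be measurable.

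To conclude, since $N>n/2$ the H\"ormander multiplier theorem applies to $\mu_R$, so $T_{\mu_R}$ is bounded on $L^q(\mathbb R^n)$ for every $q\in(1,\infty)$ with norm $\le C_q$ independent of $R$; and since, for $1<p<\infty$ and $B\in SV$, the space $L_{p,r;B}(\mathbb R^n)$ is an r.i.\ space whose lower and upper Boyd indices both equal $1/p\in(0,1)$, Boyd's interpolation theorem (cf.\ \cite[Chapter~3]{besh}) yields that $T_{\mu_R}$ is bounded on $L_{p,r;B}$ with norm independent of $R$. Therefore
\[
\|\mathcal F^{-1}[c_N(|\cdot|)\widehat g]\|_{p,r;B}=b(R)\,\|T_{\mu_R}g\|_{p,r;B}\lesssim b(R)\,\|g\|_{p,r;B},
\]
which is the claim, for all $R>0$. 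The step I expect to demand the most care is the verification of the $R$-uniform Mikhlin bound for $\mu_R=m_R/b(R)$, where one must play the derivative estimates \eqref{2.8-++} for the regularization against the monotonicity of $b$ on the ball $\{|\xi|\le2R\}$; once that is in place, the remainder is an application of standard multiplier and interpolation results.
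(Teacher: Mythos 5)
Your proof is correct and follows essentially the same route as the paper's: both normalize the truncated symbol $c_N(|\xi|)\chi(|\xi|/R)$ by $b(R)$, verify $R$-uniform Mikhlin-type bounds using \eqref{2.8-++} together with the monotonicity and slow variation of $b$ on $\{|\xi|\le 2R\}$, and then upgrade from $L^q$ to $L_{p,r;B}$ by interpolation. The only (inessential) deviations are that the paper invokes the radial multiplier criterion of Bonami--Clerc (the dyadic-integral condition in \eqref{2.9}) rather than the pointwise Cartesian H\"ormander--Mikhlin bound, and appeals to \cite[Corollary~3.15]{ego} rather than to Boyd's theorem for the transference to $L_{p,r;B}$; both substitutions are standard and valid.
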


%\medskip
%{\bf Proof.}
\begin{proof}
Take $R>0,$  $N\in  {\mathbb N}, \  N >n/2,$  and
define  $$\, m_{R;N}(t):= \chi (t/R)\, c_N(t)/b(R) \quad \text{for all }\ t>0.$$ Then $\, m_{R;N}$
satisfies (cf. \eqref{2.8} and (\ref{2.8-++})) the condition
\begin{equation}\label{2.9}
\sup _{t>0}|m_{R;N}(t)|+ \sup _{\ell \in {\mathbb Z}}
\int_{2^\ell}^{2^{\ell +1}} t^{N-1} |m^{(N)}_{R;N}(t)|\, dt \le C,
\qquad N >n/2,
\end{equation}
which, by e.g. \cite[Theorem~0.2]{bocl}, implies that $\,
m_{R;N}(|\xi|)$ generates a uniformly bounded operator family on
$\, L_p({\mathbb R}^n)\, , \;
1<p<\infty ,$ i.e.,
$\, \| {\mathcal F}^{-1}[m_{R;N}( |\xi|) {\widehat g}\, ]\,  \|_p \lesssim
 \| g\| _{p}\, \;$ if
$1<p<\infty$. %\,  N >n/2 .$
  Hence, cf. \cite[Corollary~3.15]{ego}, this is also true
for the interpolation space $\, L_{p,r;B}({\mathbb R}^n).$ Since $\, b(R)
m_{R;N}(|\xi|) {\widehat g}= c_N( |\xi|) {\widehat g}$  if $g \in L_1({\mathbb R}^n)+L_\infty({\mathbb R}^n)$
with $\, \mbox{{\rm supp}}\, {\widehat g}\subset
B_R(0),\; $ the assertion
follows.
%Multiplication of the
%resulting inequality by $\, b(R)$ yields the assertion.
%\hfill $\Box$
\end{proof}

%\smallskip
A combination of these two lemmas gives the following embedding. %involving the {\it classical Riesz potential operator}
%$I^{\sigma}$.
\begin{lemma}\label{l2.9}
Let $\, 1<p  < \infty,\, 0<\sigma < n/p,\, 1/p^*=1/p - \sigma/n,\,
1\le r\le s \le \infty,\,  \, B \in SV, b \in SV_{\uparrow}$. If
$b_n$ is defined by {\rm (\ref{2.8-})}, then
\[
\| I^{\sigma}g\|_{p^*,s;B} \lesssim b (R)\, \|
g\|_{p,r;b_nB}
\]
for all $R>0$ and for all entire functions $\, g \in L_{p,r;b_nB}({\mathbb R}^n)$ with
$\mbox{{\rm supp}}\, {\widehat g} \subset B_R(0).$
 \end{lemma}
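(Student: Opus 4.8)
The plan is to realize $I^\sigma g$ as the fractional integration $I^{\sigma,b^{-1}}_N$ of Lemma~\ref{l2.7} applied to a modified function, exploiting that $\widehat g$ is supported in $B_R(0)$.

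\emph{Step 1 (reduction to $I^{\sigma,b^{-1}}_N$).} Fix $N\in\mathbb N$ with $N>(n+1)/2$, so that $I^{\sigma,b^{-1}}_N$ and Lemma~\ref{l2.7} are at our disposal, and let $a_N$ be given by \eqref{2.6,5}. Since $\operatorname{supp}\widehat g\subset B_R(0)$, \eqref{chi} gives $\chi(|\xi|/R)\,\widehat g=\widehat g$. Define
\[
h:=\mathcal F^{-1}\Big[\frac{\chi(|\xi|/R)}{a_N(|\xi|)}\,\widehat g\,\Big],
\]
so that $\operatorname{supp}\widehat h\subset B_R(0)$ and, by construction,
\[
I^{\sigma,b^{-1}}_N h=\mathcal F^{-1}\big[|\xi|^{-\sigma}a_N(|\xi|)\cdot a_N(|\xi|)^{-1}\chi(|\xi|/R)\,\widehat g\,\big]=\mathcal F^{-1}\big[|\xi|^{-\sigma}\widehat g\,\big]=I^\sigma g .
\]

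\emph{Step 2 (the cut-off reciprocal symbol is a multiplier, uniformly in $R$).} The crux is the bound
\[
\|h\|_{p,r;b_nB}\lesssim b(R)\,\|g\|_{p,r;b_nB},\qquad R>0,
\]
with a constant independent of $R$. I would set $m_R(t):=\chi(t/R)/(b(R)a_N(t))$ and check the dyadic H\"ormander--Mikhlin condition
\[
\sup_{t>0}|m_R(t)|+\sup_{\ell\in\mathbb Z}\int_{2^\ell}^{2^{\ell+1}}t^{N-1}|m_R^{(N)}(t)|\,dt\le C
\]
with $C$ independent of $R$, mirroring the proof of Lemma~\ref{l2.8}. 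This needs the derivative estimates $|(1/a_N)^{(j)}(t)|\lesssim t^{-j}b(t)$, $1\le j\le N$, which follow from \eqref{2.7}, \eqref{2.7-++} and $a_N\approx b^{-1}$ by the quotient rule. Uniformity in $R$ is obtained exactly as in Lemma~\ref{l2.8}: $\chi^{(j)}(\cdot/R)$ for $j\ge1$ is supported where $t\approx R$, so the factor $R^{-j}$ created by differentiating the cut-off is comparable to $t^{-j}$ there, while on $\operatorname{supp}m_R\subset\{t\le 2R\}$ one has $b(t)/b(R)\lesssim1$ since $b\in SV_\uparrow$ is non-decreasing and $b(2R)\approx b(R)$ by Lemma~\ref{l2.6+}(ii); hence $|m_R(t)|\lesssim1$ and $|m_R^{(N)}(t)|\lesssim t^{-N}$ uniformly in $R$. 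Because $b_nB\in SV$ by Lemma~\ref{l2.6+}(i), the resulting $L^p$-multiplier extends to the interpolation space $L_{p,r;b_nB}$ (cf.\ \cite[Corollary~3.15]{ego}), and since $\widehat h=b(R)\,m_R(|\xi|)\widehat g$ the displayed estimate follows.

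\emph{Step 3 (conclusion).} The bound of Step~2 shows in particular that $h\in L_{p,r;b_nB}$, so Lemma~\ref{l2.7} applies to $h$ and, combined with Step~1,
\[
\|I^\sigma g\|_{p^*,s;B}=\|I^{\sigma,b^{-1}}_N h\|_{p^*,s;B}\lesssim\|h\|_{p,r;b_nB}\lesssim b(R)\,\|g\|_{p,r;b_nB},
\]
which is the assertion (with the usual modifications when $r=\infty$ or $s=\infty$). The main obstacle is the uniform-in-$R$ multiplier estimate in Step~2; everything else is bookkeeping with the already established Lemmas~\ref{l2.7} and~\ref{l2.8}.
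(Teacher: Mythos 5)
Your proof is correct, and it takes a genuinely different route from the paper's. The paper factorizes $|\xi|^{-\sigma}=|\xi|^{-\sigma}a_Nc_N\cdot\frac{1}{a_Nc_N}$, invokes Lemma~\ref{l2.7} and then Lemma~\ref{l2.8} on the "numerator" symbol $|\xi|^{-\sigma}a_Nc_N\widehat g$ (Lemma~\ref{l2.8} is what produces the factor $b(R)$), and reduces the remainder to checking that $1/(a_Nc_N)$ satisfies the dyadic H\"ormander condition~\eqref{2.9}; that verification goes through the Leibniz-type identities~\eqref{2.11}--\eqref{2.12}. You instead write $h=\mathcal F^{-1}[\chi(|\xi|/R)a_N(|\xi|)^{-1}\widehat g]$ so that $I^{\sigma,b^{-1}}_Nh=I^\sigma g$ directly, fold the cut-off and the normalization $1/b(R)$ into a single symbol $m_R=\chi(\cdot/R)/(b(R)a_N)$, and verify the H\"ormander condition for $m_R$ uniformly in $R$. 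This bypasses $c_N$ and Lemma~\ref{l2.8} altogether, at the price of having to track the $R$-dependence inside the multiplier check rather than isolating it in a separate Bernstein step. The uniformity you claim is sound: away from $t\approx R$ the derivatives all land on $a_N^{-1}$ and give $t^{-j}b(t)\lesssim t^{-j}b(R)$ on $\mathrm{supp}\,\chi(\cdot/R)$ since $b$ is non-decreasing and $b(2R)\approx b(R)$; near $t\approx R$ the factors $R^{-j}$ from differentiating the cut-off are comparable to $t^{-j}$. The derivative bounds $|(1/a_N)^{(j)}(t)|\lesssim t^{-j}b(t)$ follow from~\eqref{2.7}--\eqref{2.7-++} and $a_N\approx b^{-1}$ exactly as in the paper's treatment of $1/(a_Nc_N)$. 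Both proofs rely on the same pair of ingredients (Lemma~\ref{l2.7} plus a H\"ormander multiplier estimate via \cite[Theorem~0.2]{bocl} and \cite[Corollary~3.15]{ego}); yours is a little leaner in that it uses only the family $\{a_\ell\}$ and not $\{c_\ell\}$.
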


%\medskip
%{\bf Proof.} %We follow the pattern of the proof of Lemma \ref{l2.2+}.
\begin{proof}
Let $N\in {\mathbb N}, \  N >(n+1)/2$ and let the slowly varying functions $a_N, c_N$ be given by  \eqref{2.6,5} and
 \eqref{2.6,6}. Then %by \eqref{2.7} and  \eqref{2.8},
$\, 1 = a_N c_N/ ( a_N c_N)$ on the interval $\, (0,\infty)$. Therefore, supposing that the Fourier symbol
$\, 1/(a_N(|\xi|)c_N(|\xi|))$ generates a bounded operator on
$\, L_p({\mathbb R}^n),\, 1<p<\infty$, then, by Lemma \ref{l2.7} %(applied to the operator $I^{\sigma,b^{-1}}_N$)
 and by Lemma~\ref{l2.8}, we obtain that
\begin{eqnarray*}
\| I^{\sigma}g\|_{p^*,s;B}  & \lesssim  & \| {\mathcal
F}^{-1}[|\xi|^{-\sigma} a_N(|\xi|)c_N(|\xi|) {\widehat g}(\xi)] \,
\|_{p^*,s;B} \\
& \lesssim & \| {\mathcal F}^{-1}[c_N(|\xi|) {\widehat
g}(\xi)] \, \|_{p,r;b_nB} \lesssim b(R) \| g\| _{p,r;b_nB}\,
\end{eqnarray*}
for all $R>0$ and for all entire functions $\, g \in L_{p,r;b_nB}({\mathbb R}^n)$ with
$\mbox{{\rm supp}}\, {\widehat g} \subset B_R(0).$

Thus, by \cite[Theorem~0.2]{bocl}, it remains to show that
$\, 1/(a_N c_N)$ satisfies the condition (\ref{2.9}) (with the function $m_{R;N}$ replaced by $1/(a_N c_N)$).
Introduce the
differential operator $\, D=t(d/dt),$  define $\, D^0$ to be
 the identity operator and $\, D^j=D\, D^{j-1}, \, j \in
{\mathbb N}.$ Now note that  $\, t^N (d/dt)^N$ can
be expressed as a linear combination of $\, D^j,\, 1\le
j \le N,$ that $\, D\, [a_N(t)c_N(t)]= a_{N-1}(t)c_N(t) -a_N(t)c_{N-1}(t)$
and, by induction, that
\begin{equation}\label{2.11}
D^j\, (a_N(t)\, c_N(t)) = \sum_{k =0}^{j} (-1)^{k+1}
{j \choose k} a_{N-k}(t)\, c_{N-j+k}(t)\, ,\quad
\qquad 1 \le j \le N.
\end{equation}
Therefore,
\begin{equation}\label{2.12}\left|D^j
\frac{1}{a_N(t)\, c_N(t)}\right|
\lesssim \sum_{k=1}^{j} \left| \frac{M_{j,k}(t)}{(a_N(t)\, c_N(t))^{k+1}}
\right| \, , \quad 1 \le j \le N, \quad\mbox{for all } t>0,
\end{equation}
where the numerators $\, M_{j,k}(t)$ are appropriate linear combinations of
terms of the type
\[
\prod_{i=1}^{j} \Big\{   D^{\alpha_i^{k,j}}
(a_k(t)\, c_\ell(t))\Big\}^{\beta_i^{k,j}},
\qquad \alpha^{k,j},\beta^{k,j} \in {\mathbb N}_0^j,\; \; \;
\sum_{i=1}^{j}  \alpha_i^{k,j}\beta_i^{k,j}=j.
\]
In view of (\ref{2.6,5}) -- (\ref{2.8-++}), it is clear that the
denominators on the right-hand side of (\ref{2.12}) satisfy
$\, (a_N(t)\, c_N(t))^{k+1}\approx 1\, $ for all $t>0,$ and that, on account of
 (\ref{2.11}), %we further have
 $\,| M_{j,k}(t)|\lesssim 1$ for all $t>0$ if $1\le j \le N$ and $1\le k \le j.$
Therefore,
$\, 1/(a_N c_N)$ satisfies (\ref{2.9}) and the proof is complete.
%\hspace*{1cm} \hfill $\Box$
\end{proof}
%This follows from  the Marchaud inequality
%$$
%\omega_{\sigma}(f,t)_{X} \lesssim t^\sigma \int_{t}^{\infty}
%u^{-\sigma -1} \omega_{\sigma +\kappa}(f,u)_{X}\, du\,, \qquad X=L^{p,r;B},
%$$
%(see, e.g., \cite{trwe1}) and the Hardy inequalities.
%\end{remark}
%The analog of (\ref{1.3}) also holds in the present situation so that
%it is sufficient to work with the associated $\, K$-functional.
%\bigskip
The  following  variant of a {\it  Nikol'ski$\breve{\iota}$  inequality} will turn out to be
useful.
\begin{lemma} \label{l2.10}
Let $\, 1<p  < \infty,\, 0<\sigma < n/p,\, 1/p^*=1/p - \sigma/n,\,
1\le r\le s \le \infty,\,  \, B \in SV, b \in SV_{\uparrow}$. If
$b_n$ is defined by {\rm (\ref{2.8-})}, then
\[
\| g\| _{p^*,s;B} \lesssim R^{n(1/p -1/p^*)}b(R)\, \| g\| _{p,r;b_nB}
%\; \; \mbox{ for all }\; g \in L_{p,r;b_nB},
\]
for all $R>0$ and for all $\, g \in L_{p,r;b_nB}({\mathbb R}^n)$ with
$\mbox{{\rm supp}}\, {\widehat g} \subset B_R(0).$
\end{lemma}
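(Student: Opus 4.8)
The plan is to reduce this Nikol'ski\v{i}-type inequality to the fractional-integration embedding of Lemma~\ref{l2.9} by means of a standard trick: for an entire function $g$ of exponential type with $\mbox{supp}\,\widehat g\subset B_R(0)$, one writes $g$ as the Riesz potential of another band-limited function whose norm is controlled by a power of $R$ times the norm of $g$. Concretely, I would introduce a cut-off $\eta$ built from the function $\chi$ of \eqref{chi}, say $\eta(\xi):=\chi(|\xi|/R)$, so that $\eta\equiv 1$ on $\mbox{supp}\,\widehat g$, and then set
\[
h:={\mathcal F}^{-1}\bigl[|\xi|^{\sigma}\eta(\xi)\,\widehat g(\xi)\bigr],
\]
which is again entire with $\mbox{supp}\,\widehat h\subset B_{2R}(0)$ and satisfies $I^\sigma h=g$ (since $|\xi|^{-\sigma}|\xi|^{\sigma}\eta=\eta=1$ on the support of $\widehat g$). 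Thus Lemma~\ref{l2.9}, applied with radius $2R$ (harmless, since $b\in SV_\uparrow$ gives $b(2R)\approx b(R)$ by Lemma~\ref{l2.6+}(ii)), yields
\[
\|g\|_{p^*,s;B}=\|I^\sigma h\|_{p^*,s;B}\lesssim b(2R)\,\|h\|_{p,r;b_nB}\approx b(R)\,\|h\|_{p,r;b_nB}.
\]

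The remaining task is the Bernstein-type estimate $\|h\|_{p,r;b_nB}\lesssim R^{\sigma}\,\|g\|_{p,r;b_nB}$, where $R^{\sigma}=R^{n(1/p-1/p^*)}$ by the relation $1/p^*=1/p-\sigma/n$. For this I would observe that $\widehat h=m_R(|\xi|)\,\widehat g$ with multiplier $m_R(t):=t^{\sigma}\chi(t/R)=R^{\sigma}(t/R)^{\sigma}\chi(t/R)=R^{\sigma}\,\mu(t/R)$, where $\mu(u):=u^{\sigma}\chi(u)$ is a fixed $C^\infty$ function supported in $[0,2]$. A scaling-invariant H\"ormander--Mihlin–type condition of the form \eqref{2.9} holds for $\mu(t/R)$ uniformly in $R$ (the dilation leaves the dyadic integrals invariant), so by the multiplier theorem \cite[Theorem~0.2]{bocl} the operator ${\mathcal F}^{-1}[\mu(|\xi|/R)\,\widehat g\,]$ is bounded on $L_p(\mathbb R^n)$, $1<p<\infty$, uniformly in $R$; passing to the Lorentz--Karamata space $L_{p,r;b_nB}$ is then justified exactly as in the proof of Lemma~\ref{l2.8}, via the interpolation/retract property invoked there (cf.~\cite[Corollary~3.15]{ego}). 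Multiplying by the scalar $R^{\sigma}$ gives $\|h\|_{p,r;b_nB}\lesssim R^{\sigma}\|g\|_{p,r;b_nB}$, and combining with the previous display completes the proof.

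The main obstacle, I expect, is purely bookkeeping: verifying that the dilated multiplier $\mu(|\xi|/R)$ satisfies the hypothesis \eqref{2.9} of the multiplier theorem uniformly in $R$, and that the resulting $L_p$-boundedness transfers to the Lorentz--Karamata scale with the correct slowly varying weight $b_nB$. Neither point is deep — the first is dilation invariance of the Mihlin condition, the second is the same interpolation argument already used in Lemma~\ref{l2.8} — but one must be careful that the weight in the domain and target is the \emph{same} $b_nB$, so that transference applies without any extra slowly varying factor; the only genuine slowly varying input needed here is $b(2R)\approx b(R)$, which is Lemma~\ref{l2.6+}(ii). An alternative, if one prefers to avoid even this, is to cite the already-proved Lemma~\ref{l2.8} directly with $b$ replaced by a suitable power weight, but the self-contained multiplier argument above is cleaner.
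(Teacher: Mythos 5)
Your argument is correct, but it takes a genuinely different route from the paper's. The paper proves Lemma~\ref{l2.10} directly by a rearrangement argument: it sets $v_R=\mathcal F^{-1}[\chi(|\xi|/R)]$, uses $v_R*g=g$ together with O'Neil's inequality $g^*(t)\lesssim t\,v_R^{**}(t)g^{**}(t)+\int_t^\infty v_R^*(u)g^*(u)\,du$, splits the $\|\cdot\|_{p^*,s;B}$ integral at $t=R^{-n}$, and finishes with Hardy-type inequalities and the almost-monotonicity of $t^{\pm\varepsilon}b_n(t)$; this never touches Lemmas~\ref{l2.7}--\ref{l2.9}. You instead factor $g=I^\sigma h$ with $\widehat h=|\xi|^\sigma\chi(|\xi|/R)\widehat g$, invoke Lemma~\ref{l2.9} on $B_{2R}(0)$ (harmless since $b(2R)\approx b(R)$ by Lemma~\ref{l2.6+}(ii)), and then close with the Bernstein-type estimate $\|h\|_{p,r;b_nB}\lesssim R^\sigma\|g\|_{p,r;b_nB}$ via the multiplier $\mu(u)=u^\sigma\chi(u)$. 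That step checks out: $\mu$ is $C^\infty$ on $(0,\infty)$, compactly supported, and near $0$ one has $|\mu^{(N)}(u)|\lesssim u^{\sigma-N}$, so $\int_{2^\ell}^{2^{\ell+1}}t^{N-1}|\mu^{(N)}(t)|\,dt\lesssim 2^{\ell\sigma}$ is uniformly bounded because $\sigma>0$, and condition~\eqref{2.9} is indeed dilation-invariant (up to a factor of $2$ from re-aligning dyadic intervals); the transfer from $L_p$ to $L_{p,r;b_nB}$ is then the same interpolation step used in Lemma~\ref{l2.8}. Your approach is shorter once Lemma~\ref{l2.9} is in hand and keeps the whole argument inside the Fourier-multiplier framework of the section, whereas the paper's O'Neil route is a real-variable proof independent of the multiplier lemmas, which makes Lemma~\ref{l2.10} a standalone tool not contingent on the $I^\sigma$-embedding.
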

%\medskip
%{\bf Proof.}
\begin{proof}
Take $\, \chi$ defined by (\ref{chi})  and set $\,
v_R(x):= {\mathcal F}^{-1}[\chi (|\xi|/R)](x), \ x\in {\mathbb R}^n$, $R>0.$ Then, for all
 $x\in {\mathbb R}^n, \ t\in (0,\infty)$ and $R>0,$
\[
|v_R(x)| \lesssim \frac{R^n}{(1+R|x|)^n},\qquad v_R^*(t)
\lesssim \frac{R^n}{(1+R t^{1/n})^n},\qquad
v_R^{**}(t) \lesssim  \min \Big\{ R^n, \frac{1}{t}\Big\}.
\]
By the assumption on the support of the Fourier transform of $\, g,$ we
have  $\, v_R* g=g$. Therefore, by O'Neil's inequality,
\[
g^*(t) =(v_R* g)^*(t) \lesssim t \, v_R^{**}(t)g^{**}(t) +
\int_{t}^{\infty} v_R^*(u) g^*(u)\, du.
\]
Hence, for all $R>0$,\ \ \footnotemark\footnotetext{\ \  We assume that $r,s<\infty.$ If $s=\infty$ or $r=\infty$, then the proof is going along the same lines.}
\begin{eqnarray*}
%\begin{eqnarray}\label{100}
\| g\|_{p^*,s;B} & \lesssim & \left( \int_{0}^{\infty}\Big[
t^{1/p^*} B (t) \min \Big\{ R^n,\frac{1}{t}\Big\}
\int_{0}^{t} g^*(u)\, du\Big]^s \frac{dt}{t} \right)^{1/s}\\
&& + \, R^n \left( \int_{0}^{\infty}\Big[t^{1/p^*} B (t)
\int_{t}^{\infty} \frac{g^*(u)}{(1+Ru^{1/n})^n} \, du\Big]^s \frac{dt}{t}
\right)^{1/s} =:N_1+N_2.%\notag
\end{eqnarray*}
%\end{eqnarray}
Since $\, t^\varepsilon b_n (t), \, \varepsilon >0,$
is almost non-decreasing and $\, t^{-\varepsilon} b_n (t)$ is
almost non-increasing,  elementary estimates lead to
\begin{eqnarray*}
N_1 & \le & R^n\left( \int_{0}^{R^{-n}} \Big[\, \{t^{1/p^*+1-1/p}
b^{-1}_n(t)\} \,  t^{1/p -1} b_n(t) B(t) %\ell ^{\alpha -\gamma}(t)
\int_{0}^{t} g^*(u)\,du \Big]^s \frac{dt}{t} \right)^{1/s} \\
& & \quad + \left(\int_{R^{-n}}^{\infty}\Big[\, \{t^{1/p^* -1/p}
b^{-1}_n(t)\} \, t^{1/p -1} b_n(t) B(t)
\int_{0}^{t} g^*(u)\,du\Big]^s \frac{dt}{t}  \right)^{1/s}\\
& \lesssim & R^{n(1/p-1/p^*)}b(R) \left(
\int_{0}^{\infty}\Big[ \, t^{1/p -1} b_n(t) B(t)
\int_{0}^{t} g^*(u)\,du \Big]^s
\frac{dt}{t}  \right)^{1/s}\quad \mbox{for all } R>0.
\end{eqnarray*}
Now apply a Hardy-type inequality \cite[Lemma~4.1]{gno1} to obtain
\[
N_1 \lesssim R^{n(1/p-1/p^*)}b(R)\,
\| g\| _{p,r;b_n B}\, .
\]
Similarly, handle the term $\, N_2\, ,$ use \cite[Lemma~4.1]{gno1}
%\cite[Lemmas 2 and 3]{gno}
 to arrive at
\[
N_2 \lesssim R^n \left( \int_{0}^{\infty}\Big[ t^{1/p^* +1-1/r}
B (t) \frac{g^*(t)}{(1+Rt^{1/n})^n}\Big]^r\, dt \right)^{1/r}
= R^n \left( \int_{0}^{R^{-n}}\ldots + \int_{R^{-n}}^{\infty} \ldots
\right)^{1/r} .
\]
Apply Minkowski's inequality, observe that
\[
 (1+Rt^{1/n})^n \approx
 \left\{ \begin{array}{ll}
 1, \qquad \quad \!  &  0 < t < R^{-n},\\
  R^{n}t, & t \ge  R^{-n},
\end{array} \right.
\]
and use again almost monotonicity properties of $\, t^{\pm \varepsilon}b_n (t)$
%\ell ^\gamma (t)$
to get
%$\,
 \[N_2 \lesssim R^{n(1/p-1/p^*)} b(R) \,
\| g\| _{p,r;b_nB}\, .
\]
\end{proof}
%$
%\footnote{S:  I do not understand this proof (the estimates) for $N_2$.}
%\hfill $\Box$
We will need the Besov-type space $\, B_{(p,r;B),s}^{\,\sigma,b}({\mathbb R}^n)$, modelled
upon  the Lorentz-Karamata space $\, L_{p,r;B}({\mathbb R}^n), 1<p<\infty, 1\le r\le s\le\infty, B\in SV,$  whose
smoothness order $\, \sigma >0$ is perturbed by a  slowly varying
function $\, b \in SV_{\uparrow}.$ To this end, we introduce
the modulus of smoothness of fractional order $\kappa >0$ on the Lorentz-Karamata space $\, L_{p,r;B}({\mathbb R}^n)$
 by  (cf. \eqref{fracdif})
\[
\omega_{\kappa} (f,\delta)_{L_{p,r;B}}:= \sup_{|h|\le \delta }
 \left\|
\Delta_{h}^{\kappa} f(x) \right\|_{L_{p,r;B}({\mathbb R}^n)}
\]
and then we set
%satisfying (\ref{2.7-}),
% with an arbitrary $\, B\in SV(0,\infty),$
\begin{equation}\label{besov}
B_{(p,r;B),s}^{\,\sigma,b}({\mathbb R}^n):=
\Big\{ f \in L_{p,r;B}({\mathbb R}^n): |f|^*_{B_{(p,r;B),s}^{\,\sigma,b}} <\infty \Big\},
\end{equation}
where
%\[
%|f|_{B_{(p,r;B),s}^{\sigma,b}}:= \left(
%\int_{0}^{\infty} [u^{-\sigma}b(u^{-1})\,  \omega
%_{\kappa+\sigma}(f,u) _{p,r;B}]^s \frac{du}{u}\right)
%^{1/s} .
%\]
\[
|f|^*_{B_{(p,r;B),s}^{\,\sigma,b}}:= \|u^{-\sigma-1/s}b(u^{-1})\,  \omega
_{\kappa+\sigma}(f,u)_{L_{p,r;B}}\|_{s}.
\]
%\begin{remark}
This definition does not depend upon $\, \kappa >0$, which follows from
the  Marchaud inequality (cf. \cite[(1.12)]{trwe1}). %Corollary \ref{marchaud-in-cor}). %({\bf ??? This corollary does not involve
%the modulus of smoothness of fractional order). }

\medskip
The following lemma is the key result to prove Theorem~\ref{t1+} mentioned below.
\begin{lemma} \label{l2.11}
If $\, 1<p<\infty,\, 0< \sigma <n/p, \,1/p^* =1/p - \sigma /n, \,
1 \le r \le s \le \infty,$ and $\, B \in SV,\, b \in
SV_{\uparrow},$ then
\[
\| f\| _{p^*,s;B} \lesssim
|f|^*_{B_{(p,r; b_nB),s}^{\,\sigma,b}}\; \; \mbox{ for all }\;
f \in B_{(p,r; b_nB),s}^{\,\sigma,b}({\mathbb R}^n).
\]
\end{lemma}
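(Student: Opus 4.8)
The plan is to express both sides as $K$-functionals of one and the same couple and then to compare them through the monotonicity of the real interpolation method. Write $1/p^{*}=1/p-\sigma/n$ and fix $\sigma_{0}$ with $\sigma<\sigma_{0}<n/p$. Since the seminorm in \eqref{besov} does not depend on the order $\kappa>0$ used to define it (a consequence of the Marchaud inequality, cf.\ \cite[(1.12)]{trwe1}), we may take $\kappa=\sigma_{0}-\sigma$, so that the modulus occurring in $|f|^{*}_{B^{\sigma,b}_{(p,r;b_{n}B),s}}$ has order $\sigma_{0}$. Note that $L_{p,r;b_{n}B}$ is an r.i.\ Banach function space: with $w(t)=t^{r/p-1}(b_{n}B)(t)^{r}$ one has $L_{p,r;b_{n}B}=\Lambda_{r}(w)$, and $1<p<\infty$ together with $b_{n}B\in SV$ (Lemma~\ref{l2.6+}(i)) forces $w\in B_{r}$, so that Lemma~\ref{newlemma} applies. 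Transplanting the equivalence between the modulus of smoothness and the $K_{0}$-functional of the Riesz potential space of (possibly fractional) order $\sigma_{0}$, in the form \eqref{wilmes}--\eqref{mod-equi}, to $L_{p,r;b_{n}B}$, substituting it into the definition of $|f|^{*}_{B^{\sigma,b}_{(p,r;b_{n}B),s}}$, and performing the substitution $v=u^{\sigma_{0}}$, one obtains
\[
|f|^{*}_{B^{\sigma,b}_{(p,r;b_{n}B),s}}\approx\bigl\|v^{-\theta}\beta(v)\,K_{0}\bigl(f,v;L_{p,r;b_{n}B},H^{\sigma_{0}}L_{p,r;b_{n}B}\bigr)\bigr\|_{L^{s}(dv/v)}=\|f\|_{(L_{p,r;b_{n}B},\,H^{\sigma_{0}}L_{p,r;b_{n}B})_{G}},
\]
where $\theta=\sigma/\sigma_{0}\in(0,1)$, $\beta(v)=b(v^{-1/\sigma_{0}})\in SV$ (again Lemma~\ref{l2.6+}(i)), and $G$ is the Banach lattice over $((0,\infty),dv/v)$ with $\|h\|_{G}=\|v^{-\theta}\beta(v)|h(v)|\|_{L^{s}(dv/v)}$, so that $\|\min(1,\cdot)\|_{G}<\infty$.

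Next I would provide embeddings of the two endpoint spaces. Trivially $L_{p,r;b_{n}B}\hookrightarrow L_{p,r;b_{n}B}$. For the other endpoint, since $\sigma_{0}<n/p$, the fractional integration (Hardy--Littlewood--Sobolev) theorem for Lorentz--Karamata spaces — precisely the content of Lemma~\ref{l2.7} (with the slowly varying component of the potential absorbed into the weights; see also Lemma~\ref{l2.9}) — yields
\[
H^{\sigma_{0}}L_{p,r;b_{n}B}\hookrightarrow L_{q,s;B_{1}},\qquad \tfrac1q=\tfrac1p-\tfrac{\sigma_{0}}{n},
\]
for a suitable $B_{1}\in SV$; here the passage from the Lorentz index $r$ to the larger index $s$ is harmless and is exactly where the hypothesis $r\le s$ enters. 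By monotonicity of the $(\cdot,\cdot)_{G}$-method with respect to the endpoints,
\[
(L_{p,r;b_{n}B},\,H^{\sigma_{0}}L_{p,r;b_{n}B})_{G}\hookrightarrow(L_{p,r;b_{n}B},\,L_{q,s;B_{1}})_{G}.
\]
It then remains to identify the right-hand space. Using the Holmstedt-type formula (Theorem~\ref{holmsted-f}, equivalently \cite[Section~5.2]{besh}) to compute $K(f,\cdot;L_{p,r;b_{n}B},L_{q,s;B_{1}})$ and afterwards the norm in $(\cdot,\cdot)_{G}$, one finds that the real interpolation of these two Lorentz--Karamata spaces is again a Lorentz--Karamata space, with fine index $s$ and main exponent $p^{*}$ given by $1/p^{*}=(1-\theta)/p+\theta/q=1/p-\sigma/n$; its slowly varying component is the appropriate product of $b_{n}B$, $B_{1}$ and the scale-adjusted parameter $\beta$, and the precise relation $b_{n}(t)=b^{-1}(t^{-1/n})$ built into the base space is exactly what makes this product collapse to $B$. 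Thus $(L_{p,r;b_{n}B},L_{q,s;B_{1}})_{G}=L_{p^{*},s;B}$, and chaining the three displays gives $\|f\|_{p^{*},s;B}\lesssim|f|^{*}_{B^{\sigma,b}_{(p,r;b_{n}B),s}}$ for every admissible $f$.

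The step I expect to be the main obstacle is this last identification: tracking all the slowly varying factors ($b$, $b_{n}$, $\beta$, $B$, $B_{1}$) through the Holmstedt formula and verifying that they cancel to leave exactly $L_{p^{*},s;B}$ — this works only because of the specific definition $b_{n}(t)=b^{-1}(t^{-1/n})$, which was made with this cancellation in mind. A subsidiary point is that one must have the $K_{0}$-vs-modulus equivalence \eqref{wilmes} available for non-integer order on Lorentz--Karamata spaces, which follows from \cite[(1.13)]{gott} by the argument of \cite{wil}. As a sanity check on the statement one can argue more concretely for a restricted range of $s$: decompose $f=\sum_{j\in\mathbb{Z}}f_{j}$ into Littlewood--Paley blocks, apply the Nikol'ski\v{i}-type inequality of Lemma~\ref{l2.10} to each $f_{j}$ with $R\approx2^{j}$ (so $R^{n(1/p-1/p^{*})}b(R)\approx2^{j\sigma}b(2^{j})$) to get $\|f_{j}\|_{p^{*},s;B}\lesssim2^{j\sigma}b(2^{j})\|f_{j}\|_{p,r;b_{n}B}$, and sum; since $\|f\|_{p^{*},s;B}\lesssim\bigl\|(\|f_{j}\|_{p^{*},s;B})_{j}\bigr\|_{\ell^{s}}$ holds only for $s\le\min(p^{*},2)$, and the $\ell^{s}$-sum of block norms is equivalent to $|f|^{*}_{B^{\sigma,b}_{(p,r;b_{n}B),s}}$ via the Fourier-analytic form of the Besov seminorm, this settles the easy range but not the full one — which is why the interpolation argument, retaining the $K$-functional rather than individual block norms, is the route to $1\le r\le s\le\infty$.
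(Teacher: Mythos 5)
Your main route---repackaging $|f|^*_{B^{\sigma,b}_{(p,r;b_nB),s}}$ as the norm of $f$ in $(L_{p,r;b_nB},H^{\sigma_0}L_{p,r;b_nB})_G$, embedding the upper endpoint into a lower-index Lorentz--Karamata space, and then invoking a Holmstedt-type identification---is a plausible strategy in principle, but it has a genuine gap precisely at the step you yourself call ``the main obstacle.'' The needed identity $(L_{p,r;b_nB},L_{q,s;B_1})_G=L_{p^*,s;B}$ (with $B_1$ unspecified, with different fine indices $r$ and $s$ on the two endpoints, and with a scale-adjusted slowly varying parameter inside $G$) is not among the interpolation formulas available to the paper: what is quoted later from \cite{gus} and \cite[Lemma~5.5]{got} is $(L_{p_0^*,s;B},L_{p_1^*,s;B})_{\theta,q}=L_{p^*,q;B}$, i.e.\ a couple with a \emph{common} slowly varying weight and a \emph{common} fine index, and Theorem~\ref{holmsted-f} only reduces the problem to computing the $K$-functional of the mixed couple, not to identifying the resulting space. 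Carrying $b_nB$, $B_1$, and $\beta$ through a Holmstedt computation with unequal fine indices and unequal weights is a real calculation that must be done, not just believed. Furthermore, to pin down $B_1$ you need a Hardy--Littlewood--Sobolev embedding for the \emph{pure} Riesz potential $H^{\sigma_0}L_{p,r;b_nB}$; Lemma~\ref{l2.7} is for the modified potential $I_N^{\sigma,b^{-1}}$, so it does not directly give what you write, and Lemma~\ref{l2.9} only treats band-limited $g$. As submitted, the chain of embeddings therefore has an unproved link.

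Your ``sanity check'' is in fact essentially the paper's proof, and the restriction $s\le\min(p^*,2)$ that you attach to it is not there. The paper (following \cite[Lemma~2.6]{gott}) uses a Littlewood--Paley decomposition, the Nikol'ski\v{i}\ inequality of Lemma~\ref{l2.10}, the vector-valued sequence spaces $\ell_q^\sigma(X)$, and the interpolation identity $(L_{p_0^*,s;B},L_{p_1^*,s;B})_{\theta,q}=L_{p^*,q;B}$. The point is that one never estimates $\|\sum_j f_j\|_{p^*,s;B}$ by the $\ell^s$-sum of block norms directly; instead one interpolates the synthesis (retraction) map between two pairs of $\ell^\sigma_q(L_{p_i^*,s;B})$ and $L_{p_i^*,s;B}$, landing in $L_{p^*,s;B}$ by the above interpolation identity, with no appeal to any geometric decomposition inequality in the target. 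This is exactly the interpolation device you asked for at the end (``the interpolation argument \ldots\ is the route to $1\le r\le s\le\infty$''), except that the paper runs it on the block decomposition rather than on the $K$-functional, which both avoids the unproved Holmstedt identification and bypasses the $\min(p^*,2)$ barrier you ran into.
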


\medskip
The proof follows the same lines as the one of \cite[Lemma~2.6]{gott}.
 Indeed, we use  the  Nikol'ski\v{i} inequality from Lemma~\ref{l2.10}, and the
sequence space $\, \ell_q^\sigma (X),\, X$ a normed space, %$1\le q\le\infty$,
as the
space of $\, X$-valued sequences $\,
(F_j)_{j \in {\mathbb Z}}$ with
\[
\| (F_j)_{j}\| _{\ell_q^\sigma }:= \Big( \sum_{j \in {\mathbb Z}}^{}
[2^{j\sigma} \| b(2^{j})F_j \|_X ]^q \Big)^{1/q} <\infty \, .
\]
Since, by \cite{gus} (see also \cite[Lemma~5.5]{got}), $$\, (L_{p_0^*,s;B},L_{p_1^*,s;B})_{\theta ,q}=
L_{p^*,q;B},\,\ \
1/p^*= (1-\theta)/p_0^* + \theta/p_1^*, \,\ \  0<\theta <1,$$
the rest of the proof of
\cite[Lemma~2.6]{gott} carries over.
\hfill $\Box$
\medskip
\medskip

%We will also use $H^\lambda_{p,r;\, B}:=H^\lambda L_{p,r;\, B}$
%and
%\begin{equation}\label{1.3}
%\omega_{\lambda}(f,t)_{p,r;B} \approx
%K(f,t^\lambda;L_{p,r;B},H^\lambda_{p,r;\, B}),\qquad 1<p<\infty.
%\end{equation}

We will also need the Riesz potential space $H^\lambda_{p,r;\, B}:=H^\lambda L_{p,r;\, B}({\mathbb R}^n)$
modelled upon the Lorentz-Karamata space $\, L_{p,r;B}\, ,\, B\in SV,$ and defined analogously to the space $H^\sigma_{p,r}$
introduced in Subsection~1.2. If $1<p<\infty$, then the estimate
%and the fact that
\begin{equation}\label{1.3}
\omega_{\lambda}(f,t)_{L_{p,r;B}} \approx
K_0(f,t^\lambda;L_{p,r;B},H^\lambda_{p,r;\, B}) \quad \mbox{for all }f \in L_{p,r;B} \ \mbox{and }t>0    %, if 1<p<\infty,
\end{equation}
can be verified analogously to estimate (1.13) in \cite [Lemma~1.4]{gott}.

%\medskip
\smallskip
Now we are in a position to prove the sharp Ul'yanov inequality between Lorentz--Karamata spaces.

\begin{theorem}\label{t1+}
Let $\, \kappa >0,\, 1<p<\infty, \, 0< \sigma <n/p, \,
1/p^* = 1/p - \sigma/n,\, 1 \le r\le s \le \infty,$ and $ \,
B \in SV,\, b \in SV_{\uparrow} .$
 If
$b_n$ is defined by {\rm (\ref{2.8-})}, then
\begin{equation}\label{2.13}
\omega_\kappa (f,\delta)_{L_{p^*,s;B}} \lesssim \Big(
\int_{0}^{\delta}
[t^{-\sigma} b(t^{-1}) \, \omega_{\kappa +\sigma}(f,t)_{L_{p,r;b_nB}}]^s
\frac{dt}{t}\Big)^{1/s},\qquad \delta \to 0+,
\end{equation}
for all $\, f\in   B^{\,\sigma,b}_{(p,r; b_nB),s}({\mathbb R}^n).$% L_{p,r;b_nB}({\mathbb R}^n).$ %where, recall, $\, b_n$ is
%given by  {\rm (\ref{2.8-})}.
\end{theorem}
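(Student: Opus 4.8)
The plan is to imitate the proof of \cite[Lemma~2.6]{gott} and the argument used above for Proposition~1.2(B), reducing \eqref{2.13} to ingredients already at our disposal: the $K_0$-functional characterisation \eqref{1.3}, the Sobolev-type embedding $B^{\,\sigma,b}_{(p,r;b_nB),s}\hookrightarrow L_{p^*,s;B}$ contained in Lemma~\ref{l2.11}, and the fractional-integration estimate of Lemma~\ref{l2.9}. First I would fix a small $\delta>0$, let $V:=V_{1/\delta}$ be a de la Vall\'ee--Poussin-type mean built from the cut-off $\chi$ of \eqref{chi} (so $\operatorname{supp}\widehat{Vf}\subset B_{c/\delta}(0)$), and use \eqref{1.3} together with the realisation of the $K_0$-functional by the means $V$ (exactly as in the proof of Proposition~1.2(B), cf. \cite[Lemma~1.4]{gott}) to write
\[
\omega_\kappa(f,\delta)_{L_{p^*,s;B}}\approx K_0\big(f,\delta^\kappa;L_{p^*,s;B},H^\kappa L_{p^*,s;B}\big)\approx\|f-Vf\|_{p^*,s;B}+\delta^\kappa\,\|D_R^{\,\kappa}Vf\|_{p^*,s;B}.
\]
The two summands are then estimated separately; all slowly varying factors that appear in intermediate integrals are absorbed by Lemma~\ref{l2.6+}(iii), and the endpoint $s=\infty$ is treated with the usual modifications.

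For the ``smooth'' summand I would factor $D_R^{\,\kappa}Vf=I^{\sigma}\big(D_R^{\,\kappa+\sigma}Vf\big)$ on the spectrum of $Vf$ and apply Lemma~\ref{l2.9} to the band-limited function $D_R^{\,\kappa+\sigma}Vf$ (with $R\approx1/\delta$, so that $b(R)\approx b(\delta^{-1})$); combined with the Bernstein-type bound $\|D_R^{\,\kappa+\sigma}Vf\|_{p,r;b_nB}\lesssim\delta^{-(\kappa+\sigma)}\omega_{\kappa+\sigma}(f,\delta)_{L_{p,r;b_nB}}$ (again a consequence of the realisation of $K_0$, this time in $L_{p,r;b_nB}$) this gives
\[
\delta^\kappa\,\|D_R^{\,\kappa}Vf\|_{p^*,s;B}\lesssim\delta^\kappa\,b(\delta^{-1})\,\|D_R^{\,\kappa+\sigma}Vf\|_{p,r;b_nB}\lesssim\delta^{-\sigma}\,b(\delta^{-1})\,\omega_{\kappa+\sigma}(f,\delta)_{L_{p,r;b_nB}}.
\]
This last quantity is $\lesssim\text{RHS}\eqref{2.13}$, because the integrand in $\text{RHS}\eqref{2.13}$, restricted to $u\in(\delta/2,\delta)$, already dominates $\delta^{-\sigma}b(\delta^{-1})\omega_{\kappa+\sigma}(f,\delta)_{L_{p,r;b_nB}}$, by the monotonicity and the doubling property of $\omega_{\kappa+\sigma}$ and the slow variation of $b$.

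For the ``remainder'' summand I would apply Lemma~\ref{l2.11} to $f-Vf$, getting
\[
\|f-Vf\|_{p^*,s;B}\lesssim\Big(\int_0^\infty\big[u^{-\sigma}b(u^{-1})\,\omega_{\kappa+\sigma}(f-Vf,u)_{L_{p,r;b_nB}}\big]^s\frac{du}{u}\Big)^{1/s},
\]
and split the integral at $u=\delta$. On $(\delta,\infty)$ I would use the trivial bound $\omega_{\kappa+\sigma}(f-Vf,u)_{L_{p,r;b_nB}}\lesssim\|f-Vf\|_{p,r;b_nB}$ and the Jackson inequality $\|f-Vf\|_{p,r;b_nB}\lesssim\omega_{\kappa+\sigma}(f,\delta)_{L_{p,r;b_nB}}$, together with $\int_\delta^\infty[u^{-\sigma}b(u^{-1})]^s\,\frac{du}{u}\approx[\delta^{-\sigma}b(\delta^{-1})]^s$ (Lemma~\ref{l2.6+}(iii)); on $(0,\delta)$ I would use subadditivity of the modulus, the estimate $\omega_{\kappa+\sigma}(Vf,u)_{L_{p,r;b_nB}}\lesssim u^{\kappa+\sigma}\|D_R^{\,\kappa+\sigma}Vf\|_{p,r;b_nB}\lesssim(u/\delta)^{\kappa+\sigma}\omega_{\kappa+\sigma}(f,\delta)_{L_{p,r;b_nB}}$, and $\int_0^\delta[u^{\kappa}b(u^{-1})]^s\,\frac{du}{u}\approx[\delta^{\kappa}b(\delta^{-1})]^s$ (Lemma~\ref{l2.6+}(iii)). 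One of the resulting pieces is exactly $\text{RHS}\eqref{2.13}$, and the others are again $\lesssim\delta^{-\sigma}b(\delta^{-1})\omega_{\kappa+\sigma}(f,\delta)_{L_{p,r;b_nB}}\lesssim\text{RHS}\eqref{2.13}$. Adding the two summands yields \eqref{2.13}.

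The step I expect to demand most care is the treatment of the ``remainder'' summand: one must genuinely use that $f-Vf$ is a high-frequency function, for otherwise the \emph{a priori} bound $\omega_{\kappa+\sigma}(f-Vf,u)\lesssim\omega_{\kappa+\sigma}(f,u)$ would leave the full integral $\int_0^\infty$ rather than $\int_0^\delta$; the split at $u=\delta$, powered by the Jackson and Bernstein estimates for the means $V$, is precisely what cuts the tail down to the admissible size $\delta^{-\sigma}b(\delta^{-1})\omega_{\kappa+\sigma}(f,\delta)_{L_{p,r;b_nB}}$. A secondary technical point is the realisation of $K_0(f,\delta^\kappa;L_{p^*,s;B},H^\kappa L_{p^*,s;B})$ by the means $V$ over the Lorentz--Karamata space $L_{p,r;B}$, which rests on the uniform boundedness of $V$ and of band-limited Riesz derivatives on $L_{p,r;B}$; this is furnished by the Fourier-multiplier results underlying Lemmas~\ref{l2.8}--\ref{l2.10} together with interpolation transfer from $L_p$, $1<p<\infty$. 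Note finally that the hypothesis $r\le s$ enters only through Lemmas~\ref{l2.9} and~\ref{l2.11}.
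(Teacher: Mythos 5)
Your argument is correct, but it takes a genuinely different route to \eqref{2.13} than the paper does. The paper begins with an \emph{arbitrary} $g\in H^{\kappa+\sigma}_{p,r;b_nB}$, passes to its de la Vall\'ee--Poussin means $g_t$, applies Lemma~\ref{l2.9} to $g_t$ and Lemma~\ref{l2.11} to $f-g_t$, ``gets rid of $g_t$'' to re-insert $g$ itself, and then takes the infimum over $g$: this reduces \eqref{2.13} to the estimate $\omega_\kappa(f,t)_{L_{p^*,s;B}}\lesssim K_0(f,t^\kappa b(1/t);B^{\sigma,b}_{(p,r;b_nB),s},H^{\kappa+\sigma}_{p,r;b_nB})$, and the integral on the right-hand side of \eqref{2.13} then falls out of the perturbed Holmstedt formula of \cite[Theorem 3.1~c)]{got}. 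You instead commit immediately to the specific near-optimal decomposition $f=(f-Vf)+Vf$ with $V$ the de la Vall\'ee--Poussin means of $f$ at scale $\delta$, and replace the abstract Holmstedt step by direct computation: Lemma~\ref{l2.11} turns $\|f-Vf\|_{p^*,s;B}$ into the Besov-seminorm integral of $f-Vf$, which you split at $u=\delta$ and control with Jackson on the tail and Bernstein (via $\omega_{\kappa+\sigma}(Vf,u)\lesssim u^{\kappa+\sigma}\|D_R^{\kappa+\sigma}Vf\|$) on the low part, while Lemma~\ref{l2.9} plus Bernstein handles $\delta^\kappa\|D_R^\kappa Vf\|_{p^*,s;B}$. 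Both routes use the same two key lemmas and the same $K_0$-realisation by $V$-means; the paper's route is more structural and reusable (one proves an embedding of a $B$-space into a potential space once and invokes interpolation machinery), whereas yours is more elementary and self-contained (it avoids the perturbed Holmstedt formula entirely at the price of hands-on estimates). One small point worth making explicit: applying Lemma~\ref{l2.11} to $f-Vf$ uses that $Vf\in B^{\sigma,b}_{(p,r;b_nB),s}$, which holds because $Vf$ is band-limited so that $\omega_{\kappa+\sigma}(Vf,u)\lesssim\min(1,(u/\delta)^{\kappa+\sigma})\|Vf\|$ makes the seminorm converge.
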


\smallskip
As an example, recalling that $b$ is non-decreasing, we consider in Theorem \ref{t1+}
$$
b(t)=\begin{cases}
1,   & t\in(0,1]\\
(1+|\ln t|)^\gamma,\ \,\gamma\ge0, &t\in (1,\infty)
\end{cases}
$$
and
$$
B(t)=(1+|\ln t|)^\alpha,\ \,\alpha\in\mathbb{R}, \ \  t\in (0,\infty);
$$
cf. \cite{gott}.

%Taking $b\equiv 1$ in \eqref{2.13} and $ k,\sigma\in {\mathbb N}$ we arrive at
%the result given in Corollary \ref{cor-ry}. Note that as in Corollary \ref{cor-ry}
%the  assumption  $\overline{r}\le r$ is essential, cf. \cite[Theorem~1.1(iii)]{gott}. %\footnote{S. new}

 %\footnote{S: I have changed the last sentence. }
\smallskip
\begin{remark}\label{remark---r} Let all the assumptions of Theorem \ref{t1+} be satisfied.
Note that if $\, f\in L_{p,r; b_nB}({\mathbb R}^n)$ and $\text{RHS}(\ref{2.13})<\infty$
for some $\delta>0,$ then $\, f\in   B^{\,\sigma,b}_{(p,r; b_nB),s}({\mathbb R}^n).$ Indeed, if
$\delta >0,$ then, by Lemma \ref{l2.6+} (iii), for all $\, f\in L_{p,r; b_nB}({\mathbb R}^n)$,
\begin{align*}
 \|u^{-\sigma-1/s}\,b(u^{-1})\,\omega_{\kappa+\sigma}(f,u) _{p,r;b_nB}\|_{s,(\delta,\infty)}
 &\lesssim \|f\|_{p,r;b_nB}\ \|u^{-\sigma-1/s}b(u^{-1})\|_{s,(\delta,\infty)}\\
&\approx \|f\|_{p,r;b_nB}\ \delta^{-\sigma}b(\delta^{-1}).
\end{align*}
Consequently, for all $\, f\in L_{p,r; b_nB}({\mathbb R}^n)$,
\[
|f|^*_{B_{(p,r;b_nB),s}^{\,\sigma,b}}\lesssim \text{RHS}(\ref{2.13})+\|f\|_{p,r;b_nB}\ \delta^{-\sigma}b(\delta^{-1})<\infty,
\]
and the result follows.

Since also
\[
\text{RHS}(\ref{2.13}) \le |f|^*_{B^{\,\sigma,b}_{(p,r;b_nB),s}} \quad \mbox{for all\ \ }
f\in   B^{\,\sigma,b}_{(p,r; b_nB),s}({\mathbb R}^n),
\]
we see that
\[
 B^{\,\sigma,b}_{(p,r; b_nB),s}({\mathbb R}^n) = \{ f \in L_{p,r; b_nB}({\mathbb R}^n):\
\text{RHS}(\ref{2.13}) < \infty\ \, \mbox{for some \ } \delta>0\}.
\]
\end{remark}

%{\bf Proof.}
\begin{proof} [Proof of Theorem \ref{t1+}] %First, %if
%$f \in  B_{(p,r;b_nB),s}^{\sigma,b}$, then, by Lemma \ref{l2.11},
%$f \in L_{p^*,s;B}$ and $t>0$, then,
  By \eqref{1.3}, for all $\, f\in   B^{\,\sigma,b}_{(p,r; b_nB),s}, \,g \in
H^\kappa _{p^*,s;B}$ and $t>0, $
%\begin{equation}\label{112}
%\omega_\kappa (f,t)_{p^*,s;B} \approx  K(f,t^\kappa;
%L_{p^*,s;B}, H^\kappa _{p^*,s;B})
%\le \|f-g\|_{p^*,s;B} + t^\kappa
%\|(-\Delta)^{\kappa/2}g \|_{p^*,s;B}.%\; \; \mbox{ for all }\; g \in
%H^\kappa _{p^*,s;B}.\notag
%\end{equation}
\begin{eqnarray}\label{112}
\omega_\kappa (f,t)_{L_{p^*,s;B}} & \approx & K_0(f,t^\kappa;
L_{p^*,s;B}, H^\kappa _{p^*,s;B})\\
& \le & \|f-g\|_{p^*,s;B} + t^\kappa
\|(-\Delta)^{\kappa/2}g \|_{p^*,s;B}.\notag%\; \; \mbox{ for all }\; g \in
%H^\kappa _{p^*,s;B}.\notag
\end{eqnarray}
Take $\, g \in H^{\kappa +\sigma}
_{p,r;b_nB}$ and consider its de la Vall{\'e}e-Pous\-sin
means defined
by
$$ \, g_t:= {\mathcal F}^{-1}[\chi (t|\xi|)] *g,\quad t>0,$$
 where $\, \chi$ is
 the cut-off function from (\ref{chi}). Then %the support of $\, {\widehat g_t},$ we have
$\, \mbox{supp} \, {\widehat g_t} \subset B_{2/t}(0).$ Note also that
\begin{equation*}%\label{113}
\|g_t\|_{H^{\kappa +\sigma}_{p,r;b_nB}} \lesssim \|g\|_{H^{\kappa +\sigma}_{p,r;b_nB}} \quad\mbox{for all } t>0
\ \mbox{and } g \in  H^{\kappa +\sigma}_{p,r;b_nB}
\end{equation*}
since $\|{\mathcal F}^{-1} [\chi(t|\xi|)]\|_1 \lesssim 1$ for all $t>0$ by \cite [Corollary~2.3]{tre2}. Thus, using
Lemma \ref{l2.9}, we obtain
\begin{equation}\label{114}
\|(-\Delta)^{\kappa/2}g_t \|_{p^*,s;B}\lesssim  b(1/t)\,
\| (-\Delta)^{(\kappa + \sigma)/2}g_t\|_{p,r;b_nB}
 \quad\mbox{for all } t>0
\ \mbox{and } g \in  H^{\kappa +\sigma}_{p,r;b_nB}.
\end{equation}
Moreover, by Lemma \ref{l2.11},
\begin{equation}\label{115}
\|f-g_t\|_{p^*,s;B}\lesssim |f-g_t|^*_{B_{(p,r;b_nB),s}^{\,\sigma,b}}
 \quad\mbox{for all } t>0
\ \mbox{and } g \in  H^{\kappa +\sigma}_{p,r;b_nB}.
\end{equation}
Combining estimates \eqref{112}-\eqref{115}, we arrive at
$$%\begin{equation}\label{2.14}
\omega_\kappa (f,t)_{L_{p^*,s;B}}  \lesssim  |f-g_t|^*_
{B_{(p,r;b_nB),s}^{\,\sigma,b}} + t^\kappa b(1/t)
 \| (-\Delta)^{(\kappa + \sigma)/2}g_t\|_{p,r;b_nB}
$$%\end{equation}
for all $\, f\in   B^{\,\sigma,b}_{(p,r; b_nB),s}, \,g \in
H^{\kappa+\sigma} _{p^*,s;B}$ and $t>0. $
%for all  $t>0$
% and  $g \in  H^{\kappa +\sigma}_{p,r;b_nB}.$

One gets rid of $\, g_t$ estimating $g_t$ by $g$
in a way analogous to the proof of
  \cite [Theorem~1.1~(i)]{gott}. Thus, % for all $\, f\in   B^{\sigma,b}_{(p,r; b_nB),s}, \,g \in
%H^\kappa _{p^*,s;B}$ and $t>0. $
\[
\omega_\kappa (f,t)_{L_{p^*,s;B}}  \lesssim  |f-g|^*_
{B_{(p,r;b_nB),s}^{\,\sigma,b}} + t^\kappa b(1/t)
 \| (-\Delta)^{(\kappa + \sigma)/2}g\|_{p,r;b_nB}.%\; \; \mbox{
%for all }\; g\in H_{p,r;b_nB}^{\kappa + \sigma}.
\]
for all $\, f\in   B^{\,\sigma,b}_{(p,r; b_nB),s}, \,g \in
H^{\kappa+\sigma} _{p^*,s;B}$ and $t>0. $
 Hence, for all $f\in   B^{\,\sigma,b}_{(p,r; b_nB),s}$ and $t>0,$
\begin{equation}\label{2.15}
\omega_\kappa (f,t)_{L_{p^*,s;B}}  \lesssim K_0(f, t^\kappa b(1/t);
 B_{(p,r;b_nB),s}^{\,\sigma,b} ,H_{p,r;b_nB}^{\kappa + \sigma}).%\quad
%\mbox{for all\ }f\in   B^{\,\sigma,b}_{(p,r; b_nB),s}\ \mbox{and \ } t>0.
\end{equation}
%for all $\, f\in   B^{\sigma,b}_{(p,r; b_nB),s}$ %, \,g \inH^\kappa _{p^*,s;B}$
%and $t>0. $
If we change the variable $\, t^\kappa$ to $\, t^{1-\theta},$ with $\theta =
\sigma/(\kappa+\sigma),$
set $\, b_0(t):=b(t^{-(1-\theta)/\kappa}),\,$ and observe that $\,
B_{(p,r;b_nB),s}^{\,\sigma,b}=(L_{p,r;b_nB},
H_{p,r;b_nB}^{\kappa + \sigma})_{\theta,s;b_0},$ then we can use  the Holmstedt formula
\[
K_0(f,t^{1-\theta}b_0(t); (X,Y)_{\theta,s;b_0},Y) \approx \Big(
\int_{0}^{t}[u^{-\theta} b_0(u) K_0(f,u;X,Y)]^s \frac{du}{u} \Big)^{1/s},
\]
with $X=L_{p,r;b_nB}$ and $Y=H_{p,r;b_nB}^{\kappa + \sigma}$, which
 %easily follows  by Theorem 3.1 \cite{got}. Thus, easily follows  by
is proved in \cite [Theorem~3.1 c)]{got}. Thus,
\[
K_0(f, t^{1-\theta}b_0(t);
 B_{(p,r;\, b_nB),s}^{\,\sigma,b} ,H_{p,r;\, b_nB}^{\kappa + \sigma})
\approx  \Big( \int_{0}^{t} [u^{-\theta}
b_0(u)K_0(f,u;L_{p,r;\, b_nB},
H_{p,r;\, b_nB}^{\kappa + \sigma})\, ]^s
\frac{du}{u}\Big)^{1/s}
\]
or, after the substitution $\, u=v^{\kappa + \sigma}$ under the integral
sign and after cancelling the change of the \mbox{variable $\, t$,} one
obtains
\[
K_0(f, t^\kappa b(1/t);
 B_{(p,r;\, b_nB),s}^{\,\sigma,b} ,H_{p,r;\,b_nB}^{\kappa + \sigma}) \!\approx\!
\Big(\! \int_{0}^{t}[v^{-\sigma}b(1/v)K_0(f,v^{\kappa + \sigma};L_{p,r;\,b_nB},
H_{p,r;\,b_nB}^{\kappa + \sigma})\, ]^s
\frac{dv}{v}\Big)^{1/s}.
\]
Together with \eqref{2.15} and (\ref{1.3}), this implies the assertion of the theorem.
%\hfill $\Box$
\end{proof}

\

{\bf Acknowledgments.}
We are very grateful to the anonymous referee for the careful reading of the paper and for the
comments, which helped us to improve the manuscript.

\end{document}